%CORR AV 19/05/15 ibantobscuri6
%CORR FG 03/06/15
\documentclass{amsart}
\usepackage{amssymb,mathrsfs,MnSymbol}%,skak}
\usepackage{color}

%%%%%%%%%%%%%%%%%%%%%%%%%%%%%%%%%%%%%%MACROS

\def\bC {\mathbf{C}}
\def\bD {\mathbf{D}}

\def\bN {\mathbf{N}}

\def\bR {\mathbf{R}}

\def\fH {\mathfrak{H}}
\def\fS {\mathfrak{S}}

\def\cA {\mathcal{A}}

\def\cD {\mathcal{D}}
\def\cE {\mathcal{E}}
\def\cF {\mathcal{F}}
\def\cG {\mathcal{G}}
\def\cH {\mathcal{H}}

\def\cL {\mathcal{L}}
\def\cM {\mathcal{M}}

\def\cP {\mathcal{P}}

\def\cS {\mathcal{S}}

\def\scrH{\mathscr{H}}
\def\scrK{\mathscr{K}}

\def\a {{\alpha}}

\def\de {{\delta}}
\def\eps {{\epsilon}}

\def\l {{\lambda}}
\def\L {{\Lambda}}
\def\si {{\sigma}}
\def\Si {{\Sigma}}

\def\om {{\omega}}

\def\d {{\partial}}
\def\grad {{\nabla}}
\def\Dlt {{\Delta}}

\def\rstr {{\big |}}
\def\indc {{\bf 1}}

\def\la {\langle}
\def\ra {\rangle}
\def \La {\bigg\langle}
\def \Ra {\bigg\rangle}

%%%%%%%%%%%%%%%%%%%%%%%%%%%%%%%%%%%%COMMANDES

\newcommand{\Div}{\operatorname{div}}

\newcommand{\Dom}{\operatorname{Dom}}

\newcommand{\Supp}{\operatorname{supp}}

\newcommand{\Tr}{\operatorname{trace}}

\def\hb {{\hbar}}

\newcommand{\ba}{\begin{aligned}}
\newcommand{\ea}{\end{aligned}}

\newcommand{\be}{\begin{equation}}
\newcommand{\ee}{\end{equation}}

\newcommand{\lb}{\label}

\newtheorem{Thm}{Theorem}[section]

\newtheorem{Prop}[Thm]{Proposition}

\newtheorem{Lem}[Thm]{Lemma}

%\newcommand{\eps}[1]{{#1}_{\varepsilon}}

%%%%%%%%%%%%%%%%%%%%%%%%%%%%%%%%%%%%%%%%%%%%%%%%%%%%%%%%%%%%%%%%%%%%%%%%%%%%%%%%%%%%%%%%%%%%%%%%%%%%%%%%%%%%%
\begin{document}

\title[Mean-Field and Classical Limit for Coulomb Dynamics]{Mean-Field and Classical Limit for the $N$-Body Quantum Dynamics with Coulomb Interaction}

\author[F. Golse]{Fran\c cois Golse}
\address[F.G.]{CMLS, \'Ecole polytechnique, 91128 Palaiseau Cedex, France}
\email{francois.golse@polytechnique.edu}

\author[T. Paul]{Thierry Paul}
\address[T.P.]{CNRS \& CMLS, \'Ecole polytechnique, 91128 Palaiseau Cedex, France, \& Sorbonne Universit\'e, CNRS, Universit\'e de Paris, INRIA, Laboratoire Jacques-Louis Lions, 75005 Paris, France}
\email{thierry.paul@polytechnique.edu}

\begin{abstract}
This paper proves the validity of the joint mean-field and classical limit of the quantum $N$-body dynamics leading to the pressureless Euler-Poisson system for factorized initial data whose first marginal
has a monokinetic Wigner measure. The interaction potential is assumed to be the repulsive Coulomb potential. The validity of this derivation is limited to finite time intervals on which the Euler-Poisson 
system has a smooth solution that is rapidly decaying at infinity. One key ingredient in the proof is an inequality from [S. Serfaty, with an appendix of M. Duerinckx {\tt arXiv:1803.08345v3 [math.AP]}]. 
\end{abstract}

%\date{\today}

\keywords{Quantum dynamics, Coulomb potential, Euler-Poisson system, Hartree equation, Mean-field limit, Semiclassical approximation}

\subjclass{ 82C10 (35Q41 35Q55 35Q83 82C05) }

\maketitle

%%%%%%%%%%%%%%%%%%%%%%%%%%%%%%%%%%%%%%%%%%%%%%%%%%%%%%%%%%%%%%%%%%%%%%%%%%%%%%%%%%%%%%%%%%%%%%%%%%%%%%%%%%%%

\section{Introduction}

%%%%%%%%%%%%%%%%%%%%%%%%%%%%%%%%%%%%%%%%%%%%%%%%%%%%%%%%%%%%%%%%%%%%%%%%%%%%%%%%%%%%%%%%%%%%%%%%%%%%%%%%%%%%

The mean-field limit of the $N$-body Schr\"odinger or von Neumann equation for identical particles interacting via a repulsive Coulomb potential (such as electrons in a molecule) has been investigated in several
articles in the recent past \cite{ErdosYau,Pickl,RodSchlein}. The much simpler case of a bounded interaction potential has been treated earlier in Theorem 5.7 on p. 610 of \cite{Spohn} (see also \cite{BGM}). All 
these results are obtained in an asymptotic regime (large number $N$ of interacting particles), avoiding the semiclassical setting (i.e. assuming that the Planck constant $\hbar$ is of the same order of magnitude 
as the typical action per particle).

Classical dynamics, i.e. the differential system consisting of Newton's second law of motion written for each particle in a system, is known to describe the limit of Schr\"odinger or von Neumann equation in the 
semiclassical regime, i.e. assuming that the de Broglie wavelength of each particle is small compared to the typical length scale (or size) of the system: see for instance chapter VII in \cite{LL3} and section IV 
in \cite{LionsPaul}. The same is true of the quantum and classical mean-field dynamics respectively: see Theorems IV.2 and IV.4 in \cite{LionsPaul}. Thus, the uniformity as $\hbar\to 0$ of the mean-field limit for 
the quantum $N$-body dynamics would entail the validity of the mean-field limit for the $N$-body classical dynamics. At the time of this writing, the mean-field limit for the $N$-body classical dynamics has been 
proved rigorously for general initial data in the case of $C^{1,1}$ interaction potentials \cite{BraunHepp,Dobrushin}, or for a class of singular repulsive potentials \cite{HaurayJabin1,HaurayJabin2} whose singularity 
at the origin is weaker than that of the Coulomb potential. Other approaches \cite{Lazarovici,LazaroviciPickl} involve the idea of replacing either the Coulomb interaction, or the point charges with a conveniently 
chosen regularization thereof, where the regularization parameter is chosen so as to be vanishingly small, typically of order $O(N^{-\a})$ for some $\a>0$, in the large $N$ limit. 

Very recently, a new approach to the mean-field limit in classical dynamics with repulsive Coulomb interaction has led to a rigorous derivation of the pressureless Euler-Poisson system from Newton's second law
written for each element of a $N$-(point) particle system \cite{DS}. This new approach is based on a ``modulated energy method'' (reminiscent of \cite{Brenier} for instance, or of the relative entropy method in
\cite{Dafermos,Yau}, and of \cite{SerfatyJAMS} in the Ginzburg-Landau setting, with the case of Riesz potentials being treated in \cite{Duerinckx}), assuming that the target solution is smooth on some finite time 
interval. A very remarkable feature of the approach in \cite{DS} is that it allows handling directly the Coulomb potential (and even other singular potentials, viz. Riesz potentials) without any unphysical modification 
(such as truncation of either the potential or the charge empirical density). Unfortunately this approach seems at present limited to the case of monokinetic solutions of the Vlasov-Poisson equation --- in other words,
to deriving the Euler-Poisson rather than the Vlasov-Poisson system. Another potential drawback of this approach if one has in mind to extend it to a quantum setting is that it is couched in terms of phase-space 
empirical measures, in other words of Klimontovich solutions of the Vlasov equation, as is the case of the pioneering work of \cite{NeunzertWick} and of the proofs in \cite{BraunHepp,Dobrushin}. 

We briefly recall 
the notion of Klimontovich solution of the Vlasov equation, which is a core idea in all these contributions. If $q_1(t),\ldots,q_N(t)$ and $p_1(t),\ldots,p_N(t)$ are respectively the positions and momenta of the $N$ 
particles, the system of Newton's second law of motion written for 
each particle (assumed to be of mass $1$), i.e.
$$
\dot q_j(t)=p_j(t)\,,\qquad \dot p_j(t)=-\tfrac1N\sum_{k=1}^N\grad V(q_j(t)-q_k(t))\,,\qquad j=1,\ldots,N\,,
$$
where $V/N$ is the (pairwise) interaction potential, is equivalent, in the case where $V\in C^{1,1}(\bR^d)$, to the fact that the phase-space empirical measure of the $N$-particle system
$$
\frac1N\sum_{j=1}^N\de_{q_j(t),p_j(t)}(x,\xi)
$$
is a measure (weak) solution of the Vlasov equation
$$
\d_tf+\xi\cdot\grad_xf-\grad_xV[f](t,x)\cdot\grad_\xi f=0\,,\quad\text{ with }V[f](t,x):=\iint_{\bR^{2d}}V(x-y)f(t,dyd\eta)\,.
$$
Solutions of the Vlasov equation of this type are referred to as ``Klimontovich solutions''. (The $1/N$ scaling factor multiplying the interaction potential is typical of the mean-field limit in classical mechanics.) Then 
the validity of the mean-field limit in this setting is equivalent to the weakly continuous dependence of the measure solution to the Vlasov equation in terms of its initial data.

Various mathematical tools have been constructed recently in order to prove the uniformity as $\hbar\to 0$ of the quantum mean-field convergence rate. These tools involve quantum analogues of the Wasserstein
or Monge-Kantorovich distances in optimal transport \cite{FGMouPaul,FGTPaulCRAS,FGPaulPulvi}, or a quantum analogue of the notion of phase-space empirical measure, or of Klimontovich solutions to the 
Vlasov equation \cite{FGTPaulEmpir}. All these tools have produced satisfying results in the case of interaction potentials that are at least $C^{1,1}$. Putting them to use in the case of singular potentials (even 
less singular than the Coulomb potential) remains to be done. The interest for this uniformity issue comes from earlier works \cite{GraffiMartiPulvi,PezzoPulvi}, where partial results have been obtained with more
traditional analytical tools.

In the present paper, we explain how one key inequality in \cite{DS} can be used to prove the uniformity as $\hbar\to 0$ of the mean-field limit for the \textit{quantum} dynamics of point particles interacting via a 
repulsive Coulomb potential. While Proposition 2.3 in \cite{DS} can be used essentially as a black box in the present paper, the remaining part of the proof differs noticeably from \cite{DS}, in the first place 
because of the formalism of Klimontovich solutions systematically used in \cite{DS}: see Remark (1) following Serfaty's inequality \eqref{SSIneq} for a more detailed discussion of this point. One could follow the 
analysis in \cite{DS}, replacing classical Klimontovich solutions with their quantum analogue constructed in \cite{FGTPaulEmpir}, at the cost of rather heavy mathematical technique. Instead, we have chosen 
to take advantage of one simplifying feature in \cite{DS}, which allows controlling the convergence rates of the mean-field (large $N$) and semiclassical (small $\hbar$) regimes in terms of the first two equations 
in the BBGKY hierarchy. This simplifying feature is discussed in detail in Remarks (2) and (3) following Serfaty's inequality \eqref{SSIneq}, and the connection with the BBGKY hierarchy is explained in the 
following Remark (4).  

%%%%%%%%%%%%%%%%%%%%%%%%%%%%%%%%%%%%%%%%%%%%%%%%%%%%%%%%%%%%%%%%%%%%%%%%%%%%%%%%%%%%%%%%%%%%%%%%%%%%%%%%%%%%

\section{Presentation of the Problem. Main Results}

%%%%%%%%%%%%%%%%%%%%%%%%%%%%%%%%%%%%%%%%%%%%%%%%%%%%%%%%%%%%%%%%%%%%%%%%%%%%%%%%%%%%%%%%%%%%%%%%%%%%%%%%%%%%

Denote by $V(z):=1/4\pi|z|$ the repulsive Coulomb potential in $\bR^3$. For each  integer $N\ge 2$, consider the $N$-body quantum Hamiltonian
$$
\scrH_N:=\sum_{j=1}^N-\tfrac12\hbar^2\Dlt_{x_j}+\frac1N\sum_{1\le j<k\le N}V(x_j-x_k)
$$
with the weak coupling constant $1/N$ that is characteristic of the mean-field limit for $N$ bosons. With the notation $\fH:=L^2(\bR^3)$ and $\fH_N:=L^2((\bR^3)^N)\simeq\fH^{\otimes N}$, the unbounded operator
$\scrH_N$ with domain $\Dom(\scrH_N):=H^2((\bR^3)^N)$ is self-adjoint on $\fH_N$ (see Lemma 5 and Theorem 1 in \cite{KatoAMS}), and therefore generates a unitary group on $\fH_N$ by Stone's theorem
(see section VIII.4 in \cite{RS1}).

The quantum $N$-particle dynamics in the case of pure quantum states is defined in terms of the Cauchy problem for the $N$-body Schr\"odinger equation, with unknown the $N$-particle wave function 
$\Psi_N\equiv\Psi_N(t,x_1,\ldots,x_N)\in\bC$:
\be\lb{NSchro}
i\hbar\d_t\Psi_N=\scrH_N\Psi_N\,,\qquad\Psi_N\rstr_{t=0}=\Psi_N^{in}\,.
\ee
The generalized solution of the Cauchy problem is
$$
\Psi_N(t,\cdot)=e^{-it\scrH_N/\hbar}\Psi_N^{in}\,,\qquad t\in\bR\,,
$$
which is defined for all $\Psi_N^{in}\in\fH_N$. The function $t\mapsto\Psi_N(t,\cdot)$ so obtained belongs to $C(\bR_+;\fH_N)$.

Alternately, in the case of mixed quantum states, one considers instead the  Cauchy problem for the $N$-body von Neumann equation, with unknown $N$-particle density operator $R_N\equiv R_N(t)\in\cD(\fH_N)$.
The algebra of bounded operators on the Hilbert space $\cH$ is denoted $\cL(\cH)$. The two-sided ideal of trace class (resp. Hilbert-Schmidt) operators is denoted $\cL^1(\cH)$ (resp. $\cL^2(\cH)$). The operator,
trace and Hilbert-Schmidt norms are denoted respectively $\|\cdot\|$, $\|\cdot\|_1$ and $\|\cdot\|_2$. The sets of density operators on $\fH_N$ and of symmetric density operators on $\fH_N$ are denoted respectively
$$
\ba
\cD(\fH_N):=&\{R\in\cL(\fH_N)\text{ s.t. }R=R^*\ge 0\,,\quad\Tr_{\fH_N}(R)=1\}\,,
\\
\cD_s(\fH_N):=&\{R\in\cD(\fH_N)\text{ s.t. }U_\si RU_\si^*=R\text{ for all }\si\in\fS_N\}\,,
\ea
$$
where
$$
U_\si\Psi_N(x_1,\ldots,x_N):=\Psi_N(x_{\si^{-1}(1)},\ldots,x_{\si^{-1}(N)})\,.
$$
The $N$-body von Neumann equation takes the form
\be\lb{NvN}
i\hbar\d_tR_N(t)=[\scrH_N,R_N(t)]\,,\qquad R_N\rstr_{t=0}=R_N^{in}\,,
\ee
and its generalized solution is 
$$
R_N(t)=e^{-it\scrH_N/\hbar}R_N^{in}e^{+it\scrH_N/\hbar}\,,\qquad t\in\bR\,.
$$
One easily checks that
$$
R_N^{in}\in\cD_s(\fH_N)\implies R_N(t)\in\cD_s(\fH_N)\quad\text{ for all }t\in\bR\,.
$$
The connection between \eqref{NSchro} and \eqref{NvN} is explained by the following observation: if the initial data in \eqref{NvN} is $R_N^{in}=|\Psi_N^{in}\ra\la\Psi_N^{in}|$, then the solution $t\mapsto R_N(t)$ of 
\eqref{NvN} is $R_N(t)=|\Psi_N(t)\ra\la\Psi_N(t)|$ for all $t\in\bR$, where $t\mapsto\Psi_N(t)$ is the solution of \eqref{NSchro}. Throughout this paper, we use systematically the Dirac bra-ket notation\footnote{If 
$\psi\in L^2(\bR^d)$, the notation $|\psi\ra$ designates $\psi$ viewed as a vector in $L^2(\bR^d)$, while $\la\psi|$ designates the continuous linear functional on $L^2(\bR^d)$ defined by the formula
$$
\la\psi|\phi\ra:=\int_{\bR^d}\overline{\psi(x)}\phi(x)dx\,.
$$}.

\bigskip
The Euler-Poisson system with unknown $(\rho,u)\equiv(\rho(t,x),u(t,x))\in[0,+\infty)\times\bR^3$ takes the form
\be\lb{EulP}
\left\{
\ba
{}&\d_t\rho+\Div_x(\rho u)=0\,,&&\qquad\rho\rstr_{t=0}=\rho^{in}\,,
\\
&\d_tu+u\cdot\grad_xu+\grad_xV\star_x\rho=0\,,&&\qquad u\rstr_{t=0}=u^{in}\,.
\ea
\right.
\ee
The following result is classical, and stated in \cite{DS}.

\begin{Prop}\lb{P-ExLocEulP}
Assume that $\rho^{in}\in H^{2m}(\bR^3)$ satisfies
\be\lb{NormaRhoin}
\rho^{in}(x)\ge 0\text{ for a.e. }x\in\bR^3\,,\quad\text{ and }\quad\int_{\bR^3}\rho^{in}(y)dy=1\,,
\ee
and let $u^{in}\in L^\infty(\bR^3)$ be such that $\grad_xu^{in}\in H^{2m}(\bR^3)$. 

Then, there exists $T\equiv T[\|\rho^{in}\|_{H^{2m}(\bR^3)}+\|\grad_xu^{in}\|_{H^{2m}(\bR^3)}]>0$, and a solution $(\rho,u)$ of the pressureless Euler-Poisson system \eqref{EulP} such that
$$
t\mapsto\rho(t,\cdot)\text{ and }t\mapsto\d_{x_j}u_k(t,\cdot)\text{ belong to }C([0,T],H^{2m}(\bR^3))
$$
for each $j,k=1,2,3$, while $u\in L^\infty([0,T]\times\bR^3)$. Besides, for all $t\in[0,T]$, one has
\be\lb{NormaRhot}
\rho(t,x)\ge 0\text{ for a.e. }x\in\bR^3\,,\quad\text{ and }\quad\int_{\bR^3}\rho(t,y)dy=1\,.
\ee
In particular, if $m=2$, then $\rho\in L^\infty([0,T]\times\bR^3)$, while $\d^\a_xu_k\in L^\infty([0,T]\times\bR^3)$ for each $k=1,2,3$ and each $\a\in\bN^3$ such that $|\a|\le 3$.
\end{Prop}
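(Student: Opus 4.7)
This is a classical local well-posedness result for a quasi-linear hyperbolic system coupled to a non-local (Coulomb/Poisson) elliptic equation, and I would prove it by the standard energy method combined with a Picard iteration. The only mildly unusual feature is that $u$ itself is only in $L^\infty$, not in any Sobolev space of finite index; one must therefore carry $\|u\|_{L^\infty}$ as a separate quantity alongside the $H^{2m}$ bounds on $\rho$ and on $\grad_x u$.

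First I would record the bounds on the Coulomb force $E[\rho]:=\grad_xV\star\rho$. Since $\d_j\d_k V$ is the kernel of a Calder\'on--Zygmund operator, one has $\|\grad_x E[\rho]\|_{H^{2m}(\bR^3)}\le C\|\rho\|_{H^{2m}(\bR^3)}$. Splitting $\grad_xV$ into its singular and far-field parts, bounded respectively using $\|\rho\|_{L^\infty}\le C\|\rho\|_{H^{2m}}$ (by Sobolev embedding, valid as soon as $2m>3/2$) and the $L^1$ normalization from \eqref{NormaRhoin}, also yields $\|E[\rho]\|_{L^\infty}\le C(1+\|\rho\|_{H^{2m}})$.

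Next I would set up the iteration: starting from $(\rho^0,u^0):=(\rho^{in},u^{in})$, define $(\rho^{n+1},u^{n+1})$ as the solution of the linearized system
$$
\d_t\rho^{n+1}+\Div_x(\rho^{n+1}u^n)=0,\qquad \d_tu^{n+1}+u^n\cdot\grad_xu^{n+1}=-E[\rho^n]
$$
with Cauchy data $(\rho^{in},u^{in})$. Both equations are linear transport equations, solvable along the Lipschitz characteristics of $u^n$ (since $\grad_x u^n\in H^{2m}\hookrightarrow L^\infty$). Standard Kato--Ponce commutator estimates, applied to the continuity equation and to the equation for $\grad_x u^{n+1}$ obtained by differentiating the momentum equation, yield
$$
\ba
\tfrac{d}{dt}\|\rho^{n+1}\|_{H^{2m}}&\le C\|\grad_xu^n\|_{H^{2m}}\|\rho^{n+1}\|_{H^{2m}},\\
\tfrac{d}{dt}\|\grad_xu^{n+1}\|_{H^{2m}}&\le C\|\grad_xu^n\|_{H^{2m}}\|\grad_xu^{n+1}\|_{H^{2m}}+C\|\rho^n\|_{H^{2m}},
\ea
$$
while integration of the momentum equation along the characteristics of $u^n$ gives $\|u^{n+1}(t)\|_{L^\infty}\le\|u^{in}\|_{L^\infty}+\int_0^t\|E[\rho^n]\|_{L^\infty}\dd s$. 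Coupling these three bounds with the elliptic estimates from the previous paragraph, a standard bootstrap produces a time $T>0$ depending only on $\|\rho^{in}\|_{H^{2m}}+\|\grad_xu^{in}\|_{H^{2m}}+\|u^{in}\|_{L^\infty}$ on which $(\rho^n,\grad_x u^n)$ is uniformly bounded in $L^\infty([0,T];H^{2m}(\bR^3))$ and $u^n$ is uniformly bounded in $L^\infty([0,T]\times\bR^3)$.

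A contraction argument for the successive differences $(\rho^{n+1}-\rho^n,u^{n+1}-u^n)$ in the weaker norm $L^\infty_t(L^2_x)\times L^\infty_t(L^\infty_x)$, using the uniform bounds just obtained, then gives convergence to a solution $(\rho,u)$ in the stated spaces; uniqueness and the $C([0,T];H^{2m})$ time-regularity follow from the same argument applied to any two solutions together with the Aubin--Lions lemma. The bounds \eqref{NormaRhot} follow from the transport representation $\rho(t,X_t(y))\det(\grad_yX_t(y))=\rho^{in}(y)$, where $X_t$ is the flow of $u$, well-defined since $\grad_xu\in L^\infty_{t,x}$: non-negativity is preserved, and integration in $y$ gives conservation of total mass. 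For $m=2$, the embedding $H^4(\bR^3)\hookrightarrow W^{2,\infty}(\bR^3)$ applied to $\rho$ and to each $\d_{x_j}u_k$ yields the final sentence. The main obstacle throughout is the absence of a Sobolev bound on $u$ itself, which forces the decoupling of the $L^\infty$ control of $u$ (via characteristics) from the $H^{2m}$ control of $\grad_x u$ (via energy estimates); once this three-tier Gronwall system is in place, the closure is classical.
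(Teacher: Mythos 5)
The paper does not actually prove this proposition: it simply records that ``the following result is classical, and stated in \cite{DS}'' and moves on. There is therefore no proof in the paper against which to line up your argument. That said, your energy-method/Picard-iteration sketch is the standard way to prove local well-posedness for such a quasi-linear symmetric-hyperbolic system coupled to a zeroth-order nonlocal force, and it is essentially correct. You correctly identify the slightly delicate feature --- $u$ has no $L^2$-type decay, so one must carry $\|u\|_{L^\infty}$ as a separate scalar alongside the $H^{2m}$ controls on $\rho$ and on $\grad_x u$, controlling the former by integrating along characteristics and the latter by Kato--Ponce/Moser commutator estimates; and you handle the two halves of the Coulomb force $\grad V\star\rho$ in the right way (Calder\'on--Zygmund for $\grad^2 V\star\rho$ in $H^{2m}$, a near/far splitting against $\|\rho\|_{L^\infty}$ and $\|\rho\|_{L^1}=1$ for the force itself in $L^\infty$).

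One small refinement you give away unnecessarily: you state that your $T$ depends on $\|\rho^{in}\|_{H^{2m}}+\|\grad_x u^{in}\|_{H^{2m}}+\|u^{in}\|_{L^\infty}$, whereas the proposition asserts dependence only on the first two norms. Your own two Gronwall inequalities for $\|\rho^{n+1}\|_{H^{2m}}$ and $\|\grad_x u^{n+1}\|_{H^{2m}}$ close among themselves --- the coefficient is $\|\grad_x u^n\|_{H^{2m}}$ and the source is $\|\rho^n\|_{H^{2m}}$, neither of which involves $\|u^n\|_{L^\infty}$ --- so the lifespan produced by the bootstrap depends only on $\|\rho^{in}\|_{H^{2m}}+\|\grad_x u^{in}\|_{H^{2m}}$. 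The $L^\infty$ estimate on $u^{n+1}$ via characteristics is then a passive consequence and does not feed back into the choice of $T$. With that adjustment your sketch reproduces exactly the stated dependence of $T$.
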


\bigskip
Before we state our main result, we recall a few elementary facts on trace-class operators. 

If $R\in\cL^1(L^2(\bR^d))$, it has an integral kernel (which we henceforth abusively denote $R\equiv R(x,y)$) satisfying the following property: the function $z\mapsto R(x,x+z)$ belongs to $C_b(\bR^d;L^1(\bR^d))$.
In particular, the restriction of $R$ to the diagonal, i.e. the function $\rho(x):=R(x,x)$ is a well-defined element of $L^1(\bR^d)$, called the \textit{density function} of $R$. See for instance Example 1.18 in chapter X, 
\S 1 of \cite{Kato}, together with Lemma 2.1 (1) in \cite{BGM} or footnote 1 on pp. 61--62 of \cite{FGTPaulARMA}.

Henceforth, we denote $D_x:=-i\grad_x$. Assume that $R\in\cL^1(L^2(\bR^d))$ satisfies
$$
R=R^*\ge 0\quad\text{ and }\quad\Tr(R^{1/2}|D_x|^2R^{1/2})<\infty\,.
$$
For each test function $\phi\in C_b(\bR^d)$ and each $k=1,\ldots,d$, one defines a signed Radon measure $J^k$ on $\bR^d$ by the formula
\be\lb{DefCurrent}
\int_{\bR^d}\phi(x)J^k(dx):=\Tr((\phi\vee\tfrac12\hbar D_k)R)\,.
\ee
(Here we have used the notation
\be\lb{AntiCom}
A\vee B=AB+BA
\ee
to designate the anti-commutator of the operators $A$ and $B$.) The (signed) measure-valued vector field $(J^1,\ldots,J^d)$ so defined is henceforth referred to as the \textit{current} of $R$.

If $T$ is a trace-class operator on $L^2(\bR^{d_1}\times\bR^{d_2})\simeq L^2(\bR^{d_1})\otimes L^2(\bR^{d_2})$, one can define its partial trace $\Tr_1(T)\in\cL^1(L^2(\bR^{d_1}))$ by the identity
$$
\Tr_{L^2(\bR^{d_1})}(A\Tr_1(T))=\Tr_{L^2(\bR^{d_1}\times\bR^{d_2})}((A\otimes I_{L^2(\bR^{d_2})})T)\,.
$$
If $T\in\cD_s(\fH_N)$, one defines the marginals of $T$ as follows: for each $k=1,\ldots,N-1$, the $k$-th marginal of $T$, denoted by $T_{:k}$, is defined as $\Tr_1(T)\in\cD(\fH_k)$, viewing $T$ as a trace-class 
operator on $\fH_k\otimes\fH_{N-k}\simeq\fH_N$. Equivalently, $T_{:k}$ is defined by duality as follows:
$$
\Tr_{\fH_k}(T_{:k}A)=\Tr_{\fH_N}\left(T\left(A\otimes I_{\fH_{N-k}}\right)\right)\,,\quad\text{ for all }A\in\cL(\fH_k)\,.
$$

The Wigner transform \cite{LionsPaul} of $R$ is defined as
$$
W_{\hbar}[R](x,\xi):=\tfrac1{(2\pi)^d}\int_{\bR^d}R(x+\tfrac12\hbar y,x-\tfrac12\hbar y)e^{-i\xi\cdot y}dy\,.
$$
Since $\cL^1(L^2(\bR^d))\subset\cL^2(L^2(\bR^d))$, the Wigner transform $W_{\hbar}[R]\in L^2(\bR^{2d})$ by the Plancherel theorem.

\subsection{From the $N$-body quantum dynamics to the Euler-Poisson system}
%%%%%%%%%%%%%%%%%%%%%%%%%%%%%%%%%%%%%%%%%%%%%%%%%%%%%%%%%%%%%%%%%%%%%%%%%%%%%%%%%%%%%%%%%%%%%%%%%%%%%%%%%%%%%

Our main result is the following statement.

\begin{Thm}\lb{T-MFSCLim}
Let $\rho^{in}\in H^4(\bR^3)$ satisfy \eqref{NormaRhoin}, and let $u^{in}\in L^\infty(\bR^3)$ such that $\grad u^{in}\in H^4(\bR^3)$. Let $T>0$ and let $(\rho,u)$ be a solution  of the pressureless Euler-Poisson 
system \eqref{EulP} such that $\rho$ and $\d_x^\a u_j\in L^\infty([0,T]\times\bR^3)$ for all $k=1,2,3$ and all $\a\in\bN^3$ such that $|\a|\le 3$.

Let $R_{\hbar,N}^{in}=(R_{\hbar}^{in})^{\otimes N}$, where $R_{\hbar}^{in}\in\cD(\fH)$ satisfies 
\be\lb{TightR0}
\sup_{\hbar\in(0,1)}\Tr_\fH((R_{\hbar}^{in})^{1/2}|\hbar D_x|^4(R_{\hbar}^{in})^{1/2})<\infty\,,
\ee
and
\be\lb{Monokin0}
\Tr_\fH((R_{\hbar}^{in})^{1/2}|\hbar D_x-u^{in}(x)|^2(R_{\hbar}^{in})^{1/2})\to 0\text { as }\hbar\to 0\,,
\ee
while the density function $\rho_{\hbar}^{in}$ of $R_{\hbar}^{in}$ satisfies 
\be\lb{EPrho0}
\iint_{\bR^3\times\bR^3}\frac{(\rho_{\hbar}^{in}(x)-\rho^{in}(x))(\rho_{\hbar}^{in}(y)-\rho^{in}(y))}{|x-y|}dxdy\to 0\quad\text{ as }\hbar\to 0\,.
\ee
Let $R_{\hbar,N}(t):=e^{-it\scrH_N/\hbar}R_{\hbar,N}^{in}e^{+it\scrH_N/\hbar}$ be the generalized solution of \eqref{NvN} with initial data $R_{\hbar,N}^{in}$, and let $R_{\hbar,N:1}(t)$ be its first marginal density.
Denote by $\rho_{\hbar,N:1}(t,\cdot)$ and $J_{\hbar,N:1}(t,\cdot)$ respectively the density function and the current of $R_{\hbar,N:1}(t)$.

In the limit as $\frac1N+\hbar\to 0$ and for each $t\in[0,T]$, one has\footnote{We adopt here the terminology from \cite{Malliavin}. The linear space of bounded Radon measures on $\bR^d$ is the dual of 
the linear space $C_0(\bR^d)$ of continuous functions on $\bR^d$ vanishing at infinity, equipped with the $L^\infty$ norm (Theorem II.6.6 in \cite{Malliavin}). The weak topology of bounded Radon measures 
is the topology defined by the family of seminorms $\mu\mapsto|\la\mu,\phi\ra|$ where $\phi$ runs through $C_0(\bR^d)$. The narrow topology of bounded Radon measures is the topology defined by the 
family of seminorms $\mu\mapsto|\la\mu,\psi\ra|$ where $\psi$ runs through $C_b(\bR^d)$, the space of bounded continuous functions on $\bR^d$.}:
\be\lb{Cvrhot}
\rho_{\hbar,N:1}(t,\cdot)\to\rho(t,\cdot)\text{ narrowly in }\cP(\bR^3)\,,
\ee
while
\be\lb{CvJt}
J_{\hbar,N:1}(t,\cdot)\to\rho u(t,\cdot)\text{ for the narrow topology of signed Radon measures on }\bR^3\,.
\ee
Besides
\be\lb{Monokint}
\Tr_\fH(R_{\hbar,N:1}(t)^{1/2}|\hbar D_x-u(t,x)|^2R_{\hbar,N:1}(t)^{1/2})\to 0
\ee
and
\be\lb{Wignert}
W_{\hbar}[R_{\hbar,N:1}(t)]\to\rho(t,x)\de(\xi-u(t,x))\quad\text{ in }\cS'(\bR^3)
\ee
for each $t\in[0,T]$ as $\frac1N+\hbar\to 0$.
\end{Thm}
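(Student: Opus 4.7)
The plan is to adapt the modulated-energy method of Serfaty and Duerinckx to the quantum $N$-body setting, working directly with the first two marginals of $R_{\hbar,N}(t)$ via the BBGKY hierarchy rather than through a Klimontovich-type empirical measure reformulation. I would introduce a quantum modulated energy
$$
\cE_\hbar^N(t):=\Tr_\fH\bigl(R_{\hbar,N:1}(t)^{1/2}|\hbar D_x-u(t,x)|^2R_{\hbar,N:1}(t)^{1/2}\bigr)+\cF_\hbar^N(t),
$$
where the modulated Coulomb energy $\cF_\hbar^N(t)$ is expressed via the density $\rho_{\hbar,N:2}(t,x,y)$ of the second marginal, for instance
$$
\cF_\hbar^N(t):=\tfrac{1}{2}\iint V(x-y)\bigl(\rho_{\hbar,N:2}(t,x,y)-\rho(t,x)\rho_{\hbar,N:1}(t,y)-\rho_{\hbar,N:1}(t,x)\rho(t,y)+\rho(t,x)\rho(t,y)\bigr)dxdy.
$$
Since $R_{\hbar,N}^{in}=(R_\hbar^{in})^{\otimes N}$ gives $\rho_{\hbar,N:2}^{in}=\rho_\hbar^{in}\otimes\rho_\hbar^{in}$, the functional $\cF_\hbar^N(0)$ reduces to the left-hand side of \eqref{EPrho0}; together with \eqref{Monokin0} this yields $\cE_\hbar^N(0)\to 0$ as $\tfrac1N+\hbar\to 0$.

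Next I would differentiate $\cE_\hbar^N$ in time, using the first two equations of the BBGKY hierarchy for $R_{\hbar,N:1}$ and $R_{\hbar,N:2}$ together with the Euler--Poisson equations \eqref{EulP}. The algebraic cancellations dictated by the transport structure of $u$ reduce the derivative to three kinds of terms. First, a quadratic form in $\hbar D_x-u$ with coefficient matrix $\nabla u(t,x)$, absorbed by the modulated kinetic energy thanks to the $L^\infty$ bound on $\nabla u$ (here we use Proposition \ref{P-ExLocEulP} with $m=2$). Second, purely quantum remainders of size $O(\hbar)$ involving commutators of $\hbar D_x$ with $u(t,\cdot)$ and higher derivatives of $u$, controlled by \eqref{TightR0} and the $L^\infty$ bounds on $\d^\a u$ for $|\a|\le 3$. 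Third, a mean-field error of the form
$$
\iint(u(t,x)-u(t,y))\cdot\nabla V(x-y)\,d\bigl(\rho_{\hbar,N:2}(t)-\rho(t)\otimes\rho_{\hbar,N:1}(t)-\rho_{\hbar,N:1}(t)\otimes\rho(t)+\rho(t)\otimes\rho(t)\bigr)(x,y),
$$
which is exactly of the type controlled by the Serfaty--Duerinckx inequality from \cite{DS}.

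The core of the argument is then the application of the Serfaty--Duerinckx inequality to this mean-field error. Since it is formulated for empirical measures whereas we face the two-body density $\rho_{\hbar,N:2}$, an additional step is needed to transfer it to the quantum framework; I would either smooth empirical measures associated to a quantum phase-space realization at scale $\eta=\eta(\hbar,N)$ and track the smoothing errors via \eqref{TightR0}, or invoke the pair-density variant available in the Duerinckx appendix. Either way, the outcome is an estimate
$$
\cE_\hbar^N(t)\le\cE_\hbar^N(0)+C(T)\int_0^t\cE_\hbar^N(s)\,ds+o(1)_{\tfrac1N+\hbar\to 0},
$$
to which Gr\"onwall's lemma applies, giving $\cE_\hbar^N(t)\to 0$ uniformly on $[0,T]$.

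From the smallness of $\cE_\hbar^N(t)$ I would finally extract each of \eqref{Cvrhot}--\eqref{Wignert}. Vanishing of $\cF_\hbar^N(t)$ together with the asymptotic factorization it encodes controls $\rho_{\hbar,N:1}(t)-\rho(t)$ in a suitable negative homogeneous Sobolev norm; combined with tightness provided by \eqref{TightR0}, this yields \eqref{Cvrhot}. Convergence \eqref{Monokint} is simply the smallness of the modulated kinetic term, and \eqref{CvJt} follows by a Cauchy--Schwarz bound on $J_{\hbar,N:1}(t)-\rho(t)u(t)$ expressed in terms of the modulated kinetic energy and $\rho_{\hbar,N:1}(t)$. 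The Wigner convergence \eqref{Wignert} follows because any Wigner limit is a nonnegative phase-space measure whose spatial marginal is $\rho(t)$ and which must be supported on $\{\xi=u(t,x)\}$ by \eqref{Monokint}. The main obstacles I anticipate are (a) the rigorous transfer of the Serfaty--Duerinckx inequality from its empirical-measure setting to the quantum two-body density $\rho_{\hbar,N:2}$, and (b) the careful accounting of the $O(\hbar)$ commutator terms that arise when differentiating $\Tr(R_{\hbar,N:1}|\hbar D_x-u|^2)$ in time and that have no counterpart in \cite{DS}.
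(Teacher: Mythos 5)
Your high-level strategy is the one the paper adopts: modulated energy built from the first two marginals, time differentiation via the BBGKY hierarchy and the Euler--Poisson equations, absorption of the $\nabla u$ quadratic term into the modulated kinetic energy, application of the Serfaty inequality to the remaining mean-field error, Gr\"onwall, and finally a negative-Sobolev control of $\rho_{\hbar,N:1}-\rho$ plus Wigner-measure theory to extract the four convergences. Two points deserve scrutiny.

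First, your modulated Coulomb energy is $\tfrac12\iint V\bigl(\rho_{\hbar,N:2}-2\rho_{\hbar,N:1}\rho+\rho\rho\bigr)$, whereas the paper's functional carries the prefactor $\tfrac{N-1}{N}$ in front of $\rho_{\hbar,N:2}$ (and no global $\tfrac12$). This prefactor is not cosmetic: it is forced by the fact that the coupling constant is $1/N$ and the Hamiltonian involves $\binom{N}{2}$ pairs, so the conserved energy per particle contains exactly $\tfrac{N-1}{N}\iint V\rho_{\hbar,N:2}$. More importantly, with this normalization the time derivative of the modulated energy closes on $(R_{\hbar,N:1},R_{\hbar,N:2})$ alone, without $R_{\hbar,N:3}$ entering, and it is also precisely what matches, after integration against the $N$-body density, Serfaty's $F_N(X_N,\rho)/N^2$ plus a harmless $O(1/N)$ shift. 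Without it you will find stray $O(1/N)$ discrepancies that do not cancel.

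Second, and this is the genuine gap, you flag as an anticipated obstacle the ``rigorous transfer of the Serfaty--Duerinckx inequality from its empirical-measure setting to the quantum two-body density'' and propose either to smooth an auxiliary empirical measure at some scale $\eta(\hbar,N)$ and track smoothing errors, or to invoke a ``pair-density variant'' from the Duerinckx appendix. Neither is what is needed; the second does not exist in the form you want, and the first introduces extraneous technicalities. The resolution is much more elementary: $F_N(X_N,\rho)$ and $F'_N(X_N,(\rho,u))$ are finite for a.e.\ $X_N$ and depend only on positions, i.e.\ they are multiplication operators. One therefore simply integrates them against the full $N$-body spatial density $\rho_{\hbar,N}(t,X_N)$; symmetry of $\rho_{\hbar,N}$ shows that
$$
\frac1{N^2}\int_{(\bR^3)^N}F_N(X_N,\rho(t,\cdot))\,\rho_{\hbar,N}(t,X_N)\,dX_N
=\iint V(x_1-x_2)\Bigl(\tfrac{N-1}N\rho_{\hbar,N:2}-2\rho_{\hbar,N:1}\rho+\rho\rho\Bigr)\,dx_1dx_2\,,
$$
and likewise for $F'_N$. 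Serfaty's pointwise inequality $|F'_N|\le C\|\nabla u\|_\infty F_N + CN^{5/3}(\ldots)$ then passes under the integral against the nonnegative measure $\rho_{\hbar,N}(t,X_N)\,dX_N$, i.e.\ it is used by duality, with no adaptation whatsoever to the quantum setting. The price is that $F_N$ itself is not pointwise nonnegative, so one must also import the companion lower bound $F_N\ge -C'(1+\|\rho\|_\infty)N^{4/3}$ to shift $F_N$ before Gr\"onwall. This duality step is the key observation missing from your outline, and it is why no Klimontovich-type quantum phase-space empirical measure or smoothing at scale $\eta$ is required here.

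Your other anticipated obstacle, the $O(\hbar)$ commutator terms, is not in fact troublesome: they collapse to a single $\tfrac12\hbar^2\int\Dlt_x\Div_xu\,\rho_{\hbar,N:1}$ term, which is bounded by $\hbar^2\|\Dlt\Div u\|_{L^\infty}$ since $\rho_{\hbar,N:1}$ is a probability density; this is what requires $|\alpha|\le3$ derivatives of $u$ in $L^\infty$, consistent with $m=2$ in Proposition~\ref{P-ExLocEulP}. The endgame you sketch is correct in spirit, but note that extracting \eqref{Cvrhot} from the smallness of the modulated Coulomb energy also requires truncating $V$ near the diagonal (the paper uses a smooth Fefferman--de la Llave representation), since the quantity one controls is $\tfrac{N-1}{N}\rho_{\hbar,N:2}-2\rho_{\hbar,N:1}\rho+\rho\rho$ tested against $V$, not directly $|\rho_{\hbar,N:1}-\rho|^2$ in $\dot H^{-1}$; the self-interaction term $V_\eta(0)/N$ must be balanced against the $O(\eta)$ truncation error.
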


\smallskip
Notice that $\hbar$ and $1/N$ are \textit{completely independent} vanishingly small parameters in the Theorem above. In other words, Theorem \ref{T-MFSCLim} does \textit{not} refer to a distinguished 
asymptotic regime (such as $\hb=N^{-1/3}$ as in the case of the mean-field limit for fermions: see for instance the introduction of \cite{PRSS}). Theorem \ref{T-MFSCLim} is the analogue of the main result
in \cite{FGTPaulARMA} (Theorem 2.6 of \cite{FGTPaulARMA}) for nearly monokinetic quantum states --- i.e. states satisfying the condition \eqref{Monokint} --- and in the case where the interaction is a 
repulsive Coulomb potential, at variance with Theorem 2.6 of \cite{FGTPaulARMA} which considers only interaction potentials in $C^{1,1}(\bR^3)$. However, the method used in \cite{FGTPaulARMA} avoids 
the restriction to nearly monokinetic initial data, i.e. assumption \eqref{Monokin0}.

\smallskip
Another remark is that the choice of $R_{\hbar,N}^{in}=(R_{\hbar}^{in})^{\otimes N}$ is consistent with the Bose-Einstein statistics whenever $R_{\hbar}^{in}$ is a pure state, i.e. is of the form
$$
R_{\hbar}^{in}=|\psi_{\hbar}^{in}\ra\la\psi_{\hbar}^{in}|\,,\quad\text{ so that }R_{\hbar,N}^{in}=|\Psi_{\hbar,N}^{in}\ra\la\Psi_{\hbar,N}^{in}|\,,\quad\text{ with }\Psi_{\hbar,N}^{in}= (\psi_{\hbar}^{in})^{\otimes N}\,.
$$
Also, as mentioned above, the $1/N$ scaling of the interaction potential $V$ in the Hamiltonian $\scrH_N$ is typical of the mean-field limit for a system of $N$ bosons. Indeed, this scaling provides a balance 
between the kinetic and potential energies for the choice $R^{in}_{\hbar,N}=(R^{in}_{\hbar})^{\otimes N}$.

\subsection{Consequences of Theorem \ref{T-MFSCLim}}
%%%%%%%%%%%%%%%%%%%%%%%%%%%%%%%%%%%%%%%%%%%%%%%%%%%%%%%%%%%%%%%%%%%%%%%%%%%%%%%%%%%%%%%%%%%%%%%%%%%%%%%%%%%%%

The most important consequence of Theorem \ref{T-MFSCLim} is the uniformity as $\hbar\to 0$ of the mean-field limit of the quantum $N$-body dynamics to the Hartree equation in the large $N$ limit, 
in the case of a repulsive Coulomb interaction and in the ``near monokinetic'' regime.

We recall that the Hartree equation (in the formalism of density operators) is
\be\lb{Hartree}
i\hbar\d_tR_{\hbar}(t)=[-\tfrac12\hbar^2\Dlt_x+V\star\rho_{\hbar}(t,\cdot),R_{\hbar}(t)]\,,\qquad R_{\hbar}(0)=R_{\hbar}^{in}\,,
\ee
where $\rho_{\hbar}(t,\cdot)$ is the density function of $R_{\hbar}(t)$. 

The existence and uniqueness of the solution to this Cauchy problem has been studied in detail by \cite{ChadGlass,Bove}. We recall\footnote{See Proposition 5.5 and section 6 in \cite{Bove}, discarding the
exchange potential $B_{EX}$ and setting $B(T)=B_D(T)$ with the notation used in \cite{Bove}} that \eqref{TightR0} implies that \eqref{Hartree} has a unique global solution defined for all $t\ge 0$.

\begin{Thm}\lb{T-MFLim}
Let $\rho^{in}\in H^4(\bR^3)$ satisfy \eqref{NormaRhoin}, and let $u^{in}\in L^\infty(\bR^3)$ such that $\grad u^{in}\in H^4(\bR^3)$. Let $T>0$ be such that the system \eqref{EulP} has a solution $(\rho,u)$
satisfying the conditions $\rho\in L^\infty([0,T]\times\bR^3)$ and $\d_x^\a u_j\in L^\infty([0,T]\times\bR^3)$ for all $k=1,2,3$ and all $\a\in\bN^3$ with $|\a|\le 3$. 

Let $R_{\hbar}^{in}\in\cD(\fH)$ satisfy \eqref{TightR0} and \eqref{Monokin0}, while the density function $\rho_{\hbar}^{in}$ of $R_{\hbar}^{in}$ satisfies \eqref{EPrho0}. Let $t\mapsto R_{\hbar}(t)$ be the 
solution of the Cauchy problem for the Hartree equation \eqref{Hartree}.

Let $R_{\hbar,N}(t):=e^{-it\scrH_N/\hbar}R_{\hbar,N}^{in}e^{+it\scrH_N/\hbar}$ be the generalized solution of \eqref{NvN} with initial data $R_{\hbar,N}^{in}:=\left(R_{\hbar}^{in}\right)^{\otimes N}$, and let 
$R_{\hbar,N:1}(t)$ be the first marginal density of $R_{\hbar,N}(t)$. Denote by $\rho_{\hbar,N:1}(t,\cdot)$ and $J_{\hbar,N:1}(t,\cdot)$ respectively the density function and the current of $R_{\hbar,N:1}(t)$, 
and let $J_{\hbar}(t,\cdot)$ be the current of $R_{\hbar}(t)$.

Then
$$
\rho_{\hbar,N:1}(t,\cdot)-\rho_{\hbar}(t,\cdot)\to 0\text{ and }J_{\hbar,N:1}(t,\cdot)-J_{\hbar}(t,\cdot)\to 0
$$
for the narrow topology of signed Radon measures on $\bR^3$, while 
$$
W_{\hbar}[R_{\hbar,N:1}(t)]-W_{\hbar}[R_{\hbar}(t)]\to 0\quad\text{ in }\cS'(\bR^3)
$$
for each $t\in[0,T]$ as $\frac1N+\hbar\to 0$.
\end{Thm}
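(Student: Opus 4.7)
The plan is to deduce Theorem \ref{T-MFLim} from Theorem \ref{T-MFSCLim} by a triangle-inequality argument with the Euler-Poisson solution $(\rho,u)$ as the intermediate reference. For every test function $\phi\in C_b(\bR^3)$, write
$$
\la\rho_{\hbar,N:1}(t)-\rho_\hbar(t),\phi\ra=\la\rho_{\hbar,N:1}(t)-\rho(t,\cdot),\phi\ra-\la\rho_\hbar(t)-\rho(t,\cdot),\phi\ra\,,
$$
and analogously decompose the pairings defining the differences $J_{\hbar,N:1}-J_\hbar$ and $W_\hbar[R_{\hbar,N:1}(t)]-W_\hbar[R_\hbar(t)]$. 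Theorem \ref{T-MFSCLim} immediately handles the first summand on the right-hand side as $\tfrac1N+\hbar\to 0$. Since $\tfrac1N+\hbar\to 0$ forces $\hbar\to 0$ (both quantities being non-negative), and since the second summand depends only on $\hbar$, the proof reduces to establishing the semiclassical limit of the Hartree equation to the pressureless Euler-Poisson system in the nearly monokinetic regime: under \eqref{TightR0}--\eqref{EPrho0},
$$
\rho_\hbar(t,\cdot)\to\rho(t,\cdot)\,,\quad J_\hbar(t,\cdot)\to(\rho u)(t,\cdot)\,,\quad W_\hbar[R_\hbar(t)]\to\rho(t,x)\de(\xi-u(t,x))\,,
$$
in the topologies of \eqref{Cvrhot}--\eqref{Wignert}, for each $t\in[0,T]$ as $\hbar\to 0$.

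This intermediate one-body semiclassical limit I would prove by repeating, for equation \eqref{Hartree}, the modulated-energy / Serfaty-inequality strategy developed for Theorem \ref{T-MFSCLim}. The appropriate functional is
$$
\cF_\hbar(t):=\Tr_\fH\bigl(R_\hbar(t)^{1/2}|\hbar D_x-u(t,x)|^2R_\hbar(t)^{1/2}\bigr)+\tfrac12\iint_{\bR^3\times\bR^3}\frac{(\rho_\hbar(t,x)-\rho(t,x))(\rho_\hbar(t,y)-\rho(t,y))}{4\pi|x-y|}\dd x\dd y\,,
$$
which vanishes at $t=0$ as $\hbar\to 0$ by \eqref{Monokin0} and \eqref{EPrho0}. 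Differentiating $\cF_\hbar$ along the coupled Hartree / Euler-Poisson dynamics, the drift terms generated by $\grad_xu$, by the transport $\d_t\rho+\Div_x(\rho u)=0$, and by the quadratic $u$-dependence of $|\hbar D_x-u|^2$ produce a contribution controlled by $C(\|\grad u\|_{L^\infty}+\|\rho\|_{L^\infty})\cF_\hbar(t)$; the electrostatic cross terms coming from the Coulomb force $-\grad V\star(\rho_\hbar-\rho)$ are precisely those estimated in \cite{DS} by Serfaty's inequality \eqref{SSIneq}, applied here to the signed density $\rho_\hbar(t)-\rho(t)$ without any $1/N$ BBGKY correction. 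The only additional pieces with respect to the classical argument of \cite{DS} are the dispersive commutators produced by $[-\tfrac12\hbar^2\Dlt_x,|\hbar D_x-u(t,x)|^2]$, which should give an $O(\hbar^2)$ remainder.

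The main obstacle is controlling these dispersive remainders uniformly on $[0,T]$: this requires propagating the fourth-order kinetic bound $\Tr_\fH(R_\hbar(t)^{1/2}|\hbar D_x|^4R_\hbar(t)^{1/2})$ uniformly in $\hbar$ along the Hartree flow, which is guaranteed by the regularity theory of \cite{ChadGlass,Bove} starting from the initial bound \eqref{TightR0}. Once that propagation is in hand, Gronwall's lemma yields $\sup_{t\in[0,T]}\cF_\hbar(t)\to 0$ as $\hbar\to 0$, and the non-negativity of each of the two pieces of $\cF_\hbar$ allows one to extract, exactly as at the end of the proof of Theorem \ref{T-MFSCLim}, the narrow convergences of $\rho_\hbar$ and $J_\hbar$, and the $\cS'$ convergence of $W_\hbar[R_\hbar(t)]$ to $\rho(t,x)\de(\xi-u(t,x))$, completing the triangle inequality and hence the proof of Theorem \ref{T-MFLim}.
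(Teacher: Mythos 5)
Your plan matches the paper's exactly: Theorem \ref{T-MFLim} is deduced from Theorem \ref{T-MFSCLim} together with the one-body semiclassical statement (Proposition \ref{P-SCLim}), and the modulated energy you introduce for the Hartree/Euler--Poisson pair is precisely the paper's functional $\cG[R_\hbar,\rho,u]$. The overall strategy and the Gronwall closure via Serfaty's inequality are the same.

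One place where your sketch needs more care. Serfaty's inequality \eqref{SSIneq} is stated for the diagonal-renormalized functionals $F_N(X_N,\rho)$, $F'_N(X_N,(\rho,u))$ of a \emph{point configuration} $X_N$, and is not directly an inequality about a signed continuous density $\rho_\hbar-\rho$; there is no self-interaction to renormalize in the continuous case, so the statement you want is not literally \eqref{SSIneq}. The paper bridges this by introducing an artificial $N$: by Lemma \ref{L-FcF}, one rewrites the continuous quadratic integrals of $\rho_\hbar-\rho$ against $V$ and $(u(x)-u(y))\cdot\grad V$ as
$$
\frac1{N^2}\int_{(\bR^3)^N}F_N(X_N,\rho(s,\cdot))\prod_{n=1}^N\rho_\hbar(s,x_n)\,dx_n
$$
plus $O(1/N)$ corrections, applies \eqref{SSIneq} under the integral, and only then sends $N\to\infty$. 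This detour is also the reason for the closing remark in Section \ref{S-SCMF} about the seemingly strange appearance of $N$ in the proof of a one-body statement. A minor overcounting of hypotheses: the $O(\hbar^2)$ dispersive remainder in the time derivative of $\cG$ is bounded by $\hb^2\,\|\Dlt_x\Div_x u\|_{L^\infty}$ using only $\int\rho_\hbar(s,x)\,dx=1$, so you do not need to propagate the fourth-order kinetic bound along the Hartree flow; \eqref{TightR0} is needed only to invoke the existence theory and the conservation law of \cite{Bove}, which gives the $H^1$-type control used for the nonnegativity of the Coulomb term.
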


\smallskip
This theorem bears on the mean-field limit of the quantum $N$-particle dynamics leading to the Hartree equation in the large $N$ limit and for a repulsive Coulomb interaction. For $\hbar>0$ fixed, this limit
has been proved first in \cite{ErdosYau} and later in \cite{RodSchlein} with a bound on the convergence rate (see also \cite{Pickl}). Theorem \ref{T-MFLim} shows that this mean-field limit, i.e. the large $N$
limit, is uniform as $\hbar\to 0$, i.e. in the semiclassical regime, for nearly monokinetic states. The singularity of the Coulomb potential at the origin is a source of significant mathematical difficulties --- at the
time of this writing, the mean-field limit of the classical $N$-particle dynamics leading to the Vlasov-Poisson system remains an open problem. See the introduction for more details and more references on
these issues.

\smallskip
Theorem \ref{T-MFLim} is a straightforward consequence of Theorem \ref{T-MFSCLim}, and of the following proposition.

\begin{Prop}\lb{P-SCLim}
Let $\rho^{in}\in H^4(\bR^3)$ satisfy \eqref{NormaRhoin}, and let $u^{in}\in L^\infty(\bR^3)$ such that $\grad u^{in}\in H^4(\bR^3)$. Let $T>0$ be such that the system \eqref{EulP} has a solution $(\rho,u)$
satisfying the conditions $\rho\in L^\infty([0,T]\times\bR^3)$ and $\d_x^\a u_j\in L^\infty([0,T]\times\bR^3)$ for all $k=1,2,3$ and all $\a\in\bN^3$ with $|\a|\le 3$. 

Let $R_{\hbar}^{in}\in\cD(\fH)$ satisfy \eqref{TightR0} and \eqref{Monokin0}, while the density function $\rho_{\hbar}^{in}$ of $R_{\hbar}^{in}$ satisfies \eqref{EPrho0}. Let $t\mapsto R_{\hbar}(t)$ be the 
solution of the Cauchy problem for the Hartree equation \eqref{Hartree}. Then, for all $t\in[0,T]$, one has
$$
\rho_{\hbar}(t,\cdot)\to\rho(t,\cdot)\text{ and }J_{\hbar}(t,\cdot)\to\rho u(t,\cdot)
$$
for the narrow topology of signed Radon measures on $\bR^3$ as $\hbar\to 0$. Moreover
$$
\Tr_\fH(R_{\hbar}(t)^{1/2}|\hbar D_x-u(t,x)|^2R_{\hbar}(t)^{1/2})\to 0
$$
and
$$
W_{\hbar}[R_{\hbar}(t)]\to\rho(t,x)\de(\xi-u(t,x))\quad\text{ in }\cS'(\bR^3)
$$
for each $t\in[0,T]$ as $\hbar\to 0$.
\end{Prop}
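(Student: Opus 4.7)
The approach is the modulated energy method applied to the Hartree dynamics (in the spirit of, but distinct from, the $N$-body argument behind Theorem~\ref{T-MFSCLim}). Introduce the functional
$$
\cF_\hbar(t):=\Tr_\fH\bigl(R_\hbar(t)^{1/2}|\hbar D_x-u(t,x)|^2R_\hbar(t)^{1/2}\bigr)+\iint_{\bR^3\times\bR^3}\frac{(\rho_\hbar-\rho)(t,x)(\rho_\hbar-\rho)(t,y)}{4\pi|x-y|}\dd x\dd y,
$$
and denote its two summands by $\cK_\hbar(t)$ and $\cC_\hbar(t)$. Both are nonnegative: $\cK_\hbar$ manifestly, and $\cC_\hbar(t)=\|\grad\phi_\hbar(t,\cdot)\|_{L^2(\bR^3)}^2$ with $\phi_\hbar:=V\star(\rho_\hbar-\rho)$, since $-\Dlt V=\de$. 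Hypotheses \eqref{Monokin0} and \eqref{EPrho0} give $\cF_\hbar(0)\to 0$ as $\hbar\to 0$.

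The plan is to establish a Gronwall-type inequality
$$
\tfrac{d}{dt}\cF_\hbar(t)\le C\cF_\hbar(t)+r_\hbar(t),\qquad \int_0^T|r_\hbar(t)|\dd t=o(1)\text{ as }\hbar\to 0,
$$
with $C$ depending only on $\|\grad u\|_{L^\infty_{t,x}}$. The two continuity equations $\d_t\rho_\hbar+\Div J_\hbar=0$ (from \eqref{Hartree}) and $\d_t\rho+\Div(\rho u)=0$ (from \eqref{EulP}) give immediately $\tfrac{d}{dt}\cC_\hbar=2\int\grad\phi_\hbar\cdot(J_\hbar-\rho u)\dd x$. The derivative of $\cK_\hbar$ is computed via the Heisenberg identity $\tfrac{d}{dt}\Tr(AR_\hbar)=\Tr(\d_tA\,R_\hbar)+\frac{1}{i\hbar}\Tr([A,-\tfrac12\hbar^2\Dlt+V\star\rho_\hbar]R_\hbar)$ applied with $A=|\hbar D_x-u|^2$, after which I substitute $\d_tu+u\cdot\grad u=-\grad(V\star\rho)$ from Euler--Poisson. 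After reorganization, the ``mean force'' contribution in $\tfrac{d}{dt}\cK_\hbar$ reads $-2\int\grad\phi_\hbar\cdot(J_\hbar-\rho_\hbar u)\dd x$, so that summing
$$
\tfrac{d}{dt}\cF_\hbar=2\int\grad\phi_\hbar\cdot(\rho_\hbar-\rho)u\dd x+(\text{kinetic quadratic terms})+O(\hbar^2).
$$
The first term, via $-\Dlt\phi_\hbar=\rho_\hbar-\rho$ and integration by parts, is bounded by $C\|\grad u\|_{L^\infty}\cC_\hbar$; the kinetic quadratic terms---schematically $\Tr((\grad u)(\hbar D_x-u)\vee(\hbar D_x-u)\,R_\hbar)$---are bounded by $C\|\grad u\|_{L^\infty}\cK_\hbar$; and the $O(\hbar^2)$ semiclassical residue involves $\d_x^\a u$ for $|\a|\le 3$, which is why this regularity is assumed on $u$. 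Gronwall's lemma then gives $\cF_\hbar(t)\le e^{Ct}\cF_\hbar(0)+o(1)\to 0$ uniformly on $[0,T]$.

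From $\cF_\hbar(t)\to 0$ all the announced convergences follow. The trace convergence in the statement is $\cK_\hbar(t)\to 0$ itself. For the current, the inequality $\|J_\hbar-\rho_\hbar u\|_{L^1(\bR^3)}\le\sqrt{\cK_\hbar(t)}$ (Cauchy--Schwarz on the spectral decomposition of $R_\hbar$) yields $J_\hbar-\rho_\hbar u\to 0$ in $L^1$. The vanishing of $\cC_\hbar$ gives $\rho_\hbar\to\rho$ in $\dot H^{-1}(\bR^3)$, hence distributionally; since $\rho_\hbar$ and $\rho$ are probability measures of equal total mass, no mass can escape to infinity and the convergence upgrades to narrow convergence, whence $J_\hbar\to\rho u$ narrowly since $u\in L^\infty$. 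For the Wigner transform, any weak-$\ast$ accumulation point $W$ of $\{W_\hbar[R_\hbar(t)]\}$ in $\cS'$ is a nonnegative Radon measure on $\bR^3\times\bR^3$ with spatial marginal $\rho(t,\cdot)$ (from the density convergence) and $\int|\xi-u(t,x)|^2W(dx\,d\xi)=0$ (from the kinetic convergence), which forces $W=\rho(t,x)\de(\xi-u(t,x))$. The main technical obstacle is the commutator calculation for $\tfrac{d}{dt}\cK_\hbar$: because $u(t,x)$ depends on both variables, many cross terms arise and must be reorganized into either the cancellation with $\tfrac{d}{dt}\cC_\hbar$ described above or into terms absorbable by Gronwall; and uniform control of the $O(\hbar^2)$ semiclassical residue relies crucially on the assumed $L^\infty$ bounds for derivatives of $u$ up to order three.
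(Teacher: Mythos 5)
Your proposal is correct and, for the key estimate, takes a genuinely more elementary route than the paper. You use exactly the same functional as the paper's $\cG[R_\hb,\rho,u]$ (your $\cF_\hbar=\cK_\hbar+\cC_\hbar$), the same energy computation via the Heisenberg equation and the continuity equations, the same identification of the kinetic quadratic term and the $O(\hbar^2)$ residue, the same Gronwall closure, and the same three-step deduction of the convergences from $\cF_\hbar(t)\to0$. The divergence is in the treatment of the Coulomb commutator term
$$
\iint_{\bR^6}(u(t,x)-u(t,y))\cdot\grad V(x-y)(\rho_\hbar-\rho)(t,x)(\rho_\hbar-\rho)(t,y)\,dx\,dy
=-2\int_{\bR^3}u\cdot\grad\phi_\hbar\,\Dlt\phi_\hbar\,dx\,.
$$
The paper rewrites this as a limit of the $N$-body functionals $F_N,F'_N$ with the product density $\prod_n\rho_\hbar(s,x_n)$, invokes Serfaty's inequality \eqref{SSIneq} plus \eqref{LBF}, and then sends $N\to\infty$ to absorb the $N^{-1/3}$ and $N^{-2/3}$ errors. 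You instead integrate by parts directly:
$$
-2\int u_k\,\d_k\phi_\hbar\,\Dlt\phi_\hbar\,dx
=2\int(\d_ju_k)\,\d_k\phi_\hbar\,\d_j\phi_\hbar\,dx-\int(\Div u)\,|\grad\phi_\hbar|^2\,dx
\le C\|\grad u\|_{L^\infty}\,\|\grad\phi_\hbar\|_{L^2}^2=C\|\grad u\|_{L^\infty}\,\cC_\hbar\,,
$$
which is the classical stress-tensor bound of the modulated energy method. This suffices here precisely because both $\rho_\hbar$ and $\rho$ are $L^1\cap L^3$ functions on $\bR^3$, so $\phi_\hbar\in\dot H^1$ with $\Dlt\phi_\hbar\in L^2$, and no renormalization of self-interaction is needed --- Serfaty's inequality is only essential in the $N$-body problem because the empirical measure has Dirac masses. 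In fact the paper's closing Remark in section \ref{S-SCMF} explicitly concedes that its route through $N$ is detour-like and that ``one might have preferred a more direct argument''; yours is exactly that direct argument, and it is valid. One small technical point you should make explicit when writing this up: the intermediate integrations by parts require $\grad\phi_\hbar\in L^2\cap L^6$ and $\Dlt\phi_\hbar=\rho-\rho_\hbar\in L^2$, which follow from $\rho_\hbar\in L^\infty_t(L^1\cap L^3)$ established (via Hardy and energy conservation for Hartree) at the start of the paper's section \ref{S-SCMF}, together with $\rho\in L^\infty_t(L^1\cap L^\infty)$; one should also state that the resulting densities justify the approximation-by-smooth-functions step. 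Apart from that, the proposal is sound and arguably cleaner than the printed proof.
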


\smallskip
Proposition \ref{P-SCLim} proves that the classical limit of the Hartree equation with repulsive Coulomb interaction is described by the pressureless Euler-Poisson system for nearly monokinetic states.
This result is analogous to the derivation of the Vlasov-Poisson equation as the classical limit of the Hartree equation with repulsive Coulomb interaction in Theorem IV.5 of \cite{LionsPaul}. However,
the quantum states considered in Proposition \ref{P-SCLim} differ considerably from those in Theorem IV.5 of \cite{LionsPaul}. Indeed, in the situation considered in Theorem IV.5 of \cite{LionsPaul}, 
the family $W_{\hbar}[R_{\hbar}]$ is bounded in $L^\infty([0,T];L^2(\bR^6))$, which is incompatible with the monokinetic regime considered here.

\bigskip
The proof of this result occupies remaining part of this paper, whose outline is as follows. In section \ref{S-PropaEstim}, we introduce a functional related to the modulated energy used in \cite{DS}, and we 
compute its evolution under the $N$-particle quantum dynamics \eqref{NSchro} and the Euler-Poisson flow \eqref{EulP}. The analogy between our functional and the one used in \cite{DS} will be explained
later, in Lemma \ref{L-FcF}. Using Serfaty's main inequality in \cite{DS} leads ultimately to a control of our functional by a Gronwall inequality: this is one key step in our analysis. In section \ref{S-ProofT}, 
we explain how the bound on our functional obtained in the previous section implies all the convergence statements of Theorem \ref{T-MFSCLim}, i.e. \eqref{Cvrhot}, \eqref{CvJt}, \eqref{Monokint} and 
\eqref{Wignert}. Along with the fundamental theory of Wigner measures presented in \cite{LionsPaul}, the key ingredient in this section is a variant of the Fefferman-de la Llave representation \cite{FdlL} 
of the Coulomb potential, which provides a convenient truncation of the modulated potential energy. The proofs of Theorem \ref{T-MFLim} and Proposition \ref{P-SCLim} belongs to the last section of this 
paper (section \ref{S-SCMF}).

%%%%%%%%%%%%%%%%%%%%%%%%%%%%%%%%%%%%%%%%%%%%%%%%%%%%%%%%%%%%%%%%%%%%%%%%%%%%%%%%%%%%%%%%%%%%%%%%%%%%%%%%%%%%

\section{The Propagation Estimate}\lb{S-PropaEstim}

%%%%%%%%%%%%%%%%%%%%%%%%%%%%%%%%%%%%%%%%%%%%%%%%%%%%%%%%%%%%%%%%%%%%%%%%%%%%%%%%%%%%%%%%%%%%%%%%%%%%%%%%%%%%

The modulated energy used in \cite{DS} involves the sum of the fluctuation of kinetic energy of the $N$-particle phase space empirical measure centered at the velocity field $u$, and of the potential energy
of the difference between the $N$-particle empirical density and the density $\rho$ (where $(\rho,u)$ is the solution of \eqref{EulP}). In the quantum setting, there is no analogue of this quantity, unless one 
is willing to use the formalism of \cite{FGTPaulEmpir}.

For this reason, we shall consider a seemingly different --- although intimately related (see Lemma \ref{L-FcF} below) --- functional. Set
$$
\ba
\cE[R_{\hbar,N},\rho,u](t):=\Tr_\fH(R_{\hbar,N:1}(t)^{1/2}|\hbar D_x-u(t,x)|^2R_{\hbar,N:1}(t)^{1/2})\,,
\\
+\iint_{\bR^6}V(x_1-x_2)\tfrac{N-1}N\rho_{\hbar,N:2}(t,x_1,x_2)dx_1dx_2+\int_{\bR^3}\rho(t,x_1)(V\star_x\rho)(t,x_1)dx_1
\\
-2\int_{\bR^3}\rho_{\hbar,N:1}(t,x_1)(V\star_x\rho)(t,x_1)dx_1&\,.
\ea
$$
where $\rho_{\hbar,N:1}(t,\cdot)$ is the density function of the first marginal $R_{\hbar,N:1}(t)$, while $\rho_{\hbar,N:2}(t,\cdot)$ is the density function of the second marginal $R_{\hbar,N:2}(t)$. 

%%%%%%%%%%%%%%%%%%%%%%%%%%%%%%%%%%%%%%%%%%%%%%%%%%%%%%%%%%%%%%%%%%%%%%%%%%%%%%%%%%%%%%%%%%%%%%%%%%%%%%%%%%%%

\subsection{Step 1: energy conservation}

%%%%%%%%%%%%%%%%%%%%%%%%%%%%%%%%%%%%%%%%%%%%%%%%%%%%%%%%%%%%%%%%%%%%%%%%%%%%%%%%%%%%%%%%%%%%%%%%%%%%%%%%%%%%

We begin with a computation which identifies $\cE[R_{\hbar,N},0,0](t)$ as (twice) the energy per particle in the system, and therefore justifies referring to $\cE[R_{\hbar,N},\rho,u]$ as a ``modulated energy''.

\begin{Lem}\lb{L-EnergyCons}
Let $R_{\hbar,N}^{in}\in\cD_s(\fH_N)$ satisfy 
$$
\Tr_{\fH_N}\left((R_{\hbar,N}^{in})^{1/2}\left(I+\sum_{n=1}^N-\tfrac12\hbar^2\Dlt^{(n)}\right)^2(R_{\hbar,N}^{in})^{1/2}\right)<\infty\,,
$$
where $\Dlt^{(n)}$ designates the Laplacian acting on the $n$-th factor\footnote{In other words, for each $\Psi\in C^2((\bR^3)^N)$, one sets $$(\Dlt^{(n)}\Psi)(x_1,\ldots,x_N):=\Dlt_{x_n}\Psi_N(x_1,\ldots,x_N)\,.$$}
in $(\bR^3)^N$, i.e. on the position variable of the $n$-th particle. Let $R_{\hbar,N}(t):=e^{-it\scrH_N/\hbar}R_{\hbar,N}^{in}e^{+it\scrH_N/\hbar}$. Then
$$
\cE[R_{\hbar,N},0,0](t)=\cE[R_{\hbar,N},0,0](0)\quad\text{ for all }t\ge 0\,.
$$
\end{Lem}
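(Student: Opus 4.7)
The plan is to identify $\tfrac{N}{2}\cE[R_{\hbar,N},0,0](t)$ with the expected total energy $\Tr_{\fH_N}(\scrH_N R_{\hbar,N}(t))$ of the $N$-body system. Once this is done, the conclusion is immediate from the unitarity of $e^{-it\scrH_N/\hbar}$ provided by Stone's theorem together with the cyclicity of the trace: since $\scrH_N$ commutes with its own propagator,
\[
\Tr_{\fH_N}(\scrH_N R_{\hbar,N}(t)) = \Tr_{\fH_N}\bigl(e^{+it\scrH_N/\hbar}\,\scrH_N\, e^{-it\scrH_N/\hbar} R_{\hbar,N}^{in}\bigr) = \Tr_{\fH_N}(\scrH_N R_{\hbar,N}^{in}).
\]

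To carry out the identification, I would first note that $\scrH_N$ commutes with each permutation operator $U_\si$, so the symmetry $R_{\hbar,N}(t)\in\cD_s(\fH_N)$ is preserved for all $t\ge 0$. Using this $\fS_N$-invariance together with the defining property of the first and second marginals, the $N$ kinetic terms $\Tr_{\fH_N}(-\tfrac12\hbar^2\Dlt_{x_j}R_{\hbar,N}(t))$ are all equal to $\tfrac12\Tr_\fH(|\hbar D_x|^2 R_{\hbar,N:1}(t))$, while the $\binom{N}{2}$ pair interactions $\Tr_{\fH_N}(V(x_j-x_k)R_{\hbar,N}(t))$ are all equal to $\iint V(x_1-x_2)\rho_{\hbar,N:2}(t,x_1,x_2)\dd x_1\dd x_2$. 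Combining these identities with the $1/N$ mean-field coupling yields
\[
\Tr_{\fH_N}(\scrH_N R_{\hbar,N}(t)) = \tfrac{N}{2}\Tr_\fH(|\hbar D_x|^2 R_{\hbar,N:1}(t)) + \tfrac{N-1}{2}\!\iint\! V(x_1-x_2)\rho_{\hbar,N:2}(t,x_1,x_2)\dd x_1\dd x_2,
\]
which is exactly $\tfrac{N}{2}\cE[R_{\hbar,N},0,0](t)$ once one rewrites the kinetic contribution as $\Tr_\fH(|\hbar D_x|^2 R_{\hbar,N:1}) = \Tr_\fH(R_{\hbar,N:1}^{1/2}|\hbar D_x|^2 R_{\hbar,N:1}^{1/2})$.

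The main obstacle is that $\scrH_N$ is unbounded and $V$ is singular at the origin, so all of the trace manipulations above must be justified. This is precisely the purpose of the moment hypothesis on $R_{\hbar,N}^{in}$: combined with Kato's inequality $|x|^{-1}\le\eps(-\Dlt_x)+C/\eps$ on $L^2(\bR^3)$ (which expresses the infinitesimal relative boundedness of the Coulomb potential with respect to $-\Dlt$), it yields $\Tr_{\fH_N}(\scrH_N^2 R_{\hbar,N}^{in})<\infty$, so that $\scrH_N R_{\hbar,N}^{in}$ is trace-class and every individual term entering the expansion of the energy is finite and well-defined. This bound propagates under the unitary evolution, so the same holds at time $t$, and the cyclicity argument above applies unambiguously. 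What remains is routine functional-analytic bookkeeping — in particular, checking that partial tracing commutes with the unbounded pieces of $\scrH_N$ once they have been appropriately sandwiched by $R_{\hbar,N}(t)^{1/2}$ — which poses no conceptual difficulty.
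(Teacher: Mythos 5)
Your proposal is correct and follows essentially the same route as the paper: identify $\tfrac{N}{2}\cE[R_{\hbar,N},0,0](t)$ with $\Tr_{\fH_N}\bigl(R_{\hbar,N}(t)^{1/2}\scrH_N R_{\hbar,N}(t)^{1/2}\bigr)$ (well-defined thanks to the moment hypothesis and the Kato-Rellich infinitesimal relative boundedness of $V$ with respect to $-\Dlt$, which gives $\Dom(\scrH_N)=H^2$), use symmetry to contract the sums of one-body and two-body terms to the first and second marginals, and conclude by unitarity of $e^{-it\scrH_N/\hbar}$. The one cosmetic difference is that the paper systematically works with the sandwiched trace $\Tr(R^{1/2}\scrH_N R^{1/2})$ via the integral kernel of $R^{1/2}$ rather than $\Tr(\scrH_N R)$, but you flag exactly this point yourself, and the two are equal once $\scrH_N R^{1/2}\in\cL^2(\fH_N)$, which is what the hypothesis guarantees.
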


\begin{proof}
Let $\psi_m$ for $m\ge 1$ be a complete system of eigenfunctions of $R_{\hbar,N}^{in}$, with $R_{\hbar,N}^{in}\psi_m=\l_m\psi_m$. Our assumption implies that
$$
\ba
\|\scrH_N R_{\hbar,N}(t)^{1/2}\|_2^2=&\sum_{m\ge 1}\l_m\|e^{-it\scrH_N/\hbar}\scrH_N\psi_m\|_{\fH_N}^2
\\
=&\sum_{m\ge 1}\l_m\|\scrH_N\psi_m\|_{\fH_N}^2=\|\scrH_N(R^{in}_{\hbar,N})^{1/2}\|_2^2
\\
\le &C_N\sum_{m\ge 1}\l_m\left\|\left(I+\sum_{n=1}^N-\tfrac12\hbar^2\Dlt^{(n)}\right)\psi_m\right\|_{\fH_N}^2
\\
= &C_N\Tr_{\fH_N}\left((R_{\hbar,N}^{in})^{1/2}\left(I+\sum_{n=1}^N-\tfrac12\hbar^2\Dlt^{(n)}\right)^2(R_{\hbar,N}^{in})^{1/2}\right)<\infty\,.
\ea
$$
The first equality follows from the definition of a Hilbert-Schmidt operator, the inequality from the fact that $\Dom(\scrH_N)\subset H^2((\bR^3)^N)$, and the last equality from the definition of the trace.
Since we have assumed that the last right hand side above is finite, the inequality shows that $\l_m>0\implies\psi_m\in\Dom(\scrH_N)$, and the second equality follows from the fact that $e^{-it\scrH_N/\hbar}$
is unitary.

By the Cauchy-Schwarz inequality $R_{\hbar,N}(t)^{1/2}\scrH_N R_{\hbar,N}(t)^{1/2}\in\cL^1(\fH_N)$ and
\be\lb{ConstEnerg}
\ba
\Tr_{\fH_N}(R_{\hbar,N}(t)^{1/2}\scrH_N R_{\hbar,N}(t)^{1/2})=\sum_{m\ge 1}\l_m\la e^{-it\scrH_N/\hbar}\psi_m|e^{-it\scrH_N/\hbar}\scrH_N\psi_m\ra
\\
=\sum_{m\ge 1}\l_m\la\psi_m|\scrH_N\psi_m\ra=\Tr_{\fH_N}((R_{\hbar,N}^{in})^{1/2}\scrH_N (R_{\hbar,N}^{in})^{1/2})&\,.
\ea
\ee
Let $S_{\hbar,N}(t,X_N,Y_N)$ be the integral kernel of $R_{\hbar,N}(t)^{1/2}$; then
$$
\ba
\Tr_{\fH_N}(R_{\hbar,N}(t)^{1/2}\scrH_N R_{\hbar,N}(t)^{1/2})
\\
=\sum_{j=1}^N\iint_{\bR^{6N}}\overline{S_{\hbar,N}(t,X_N,Y_N)}(-\tfrac12\hbar^2\Dlt_{x_j})S_{\hbar,N}(t,X_N,Y_N)dY_NdX_N
\\
+\tfrac1N\sum_{1\le j<k\le N}\iint_{\bR^{6N}}\overline{S_{\hbar,N}(t,X_N,Y_N)}V(x_j-x_k)S_{\hbar,N}(t,X_N,Y_N)dY_NdX_N
\\
=\tfrac12N\iint_{\bR^{6N}}|\hbar\grad_{x_1}S_{\hbar,N}(t,X_N,Y_N)|^2dY_NdX_N
\\
+\tfrac12(N-1)\int_{\bR^{3N}}V(x_1-x_2)\rho_{\hbar,N}(t,X_N)dX_N&\,,,
\ea
$$
since $R_{\hbar,N}(t)\in\cD_s(\fH_N)$ for all $t\in\bR$. 

In terms of the first and second marginals of $R_{\hbar,N}(t)$, one finds that
$$
\ba
\Tr_{\fH_N}(R_{\hbar,N}(t)^{1/2}\scrH_N R_{\hbar,N}(t)^{1/2})
\\
=\tfrac12N\Tr_{\fH}(R_{\hbar,N:1}(t)^{1/2}|\hbar D_x|^2R_{\hbar,N:1}(t)^{1/2})
\\
+\tfrac12(N-1)\int_{\bR^{3N}}V(x_1-x_2)\rho_{\hbar,N}(t,X_N)dX_N=\tfrac12N\cE[R_{\hbar,N},0,0](t)&\,.
\ea
$$
Therefore, \eqref{ConstEnerg} implies that $\cE[R_{\hbar,N},0,0](t)=\cE[R_{\hbar,N},0,0](0)$ for each $t\ge 0$.
\end{proof}

%%%%%%%%%%%%%%%%%%%%%%%%%%%%%%%%%%%%%%%%%%%%%%%%%%%%%%%%%%%%%%%%%%%%%%%%%%%%%%%%%%%%%%%%%%%%%%%%%%%%%%%%%%%%

\subsection{Step 2: evolution of the cross-term}

%%%%%%%%%%%%%%%%%%%%%%%%%%%%%%%%%%%%%%%%%%%%%%%%%%%%%%%%%%%%%%%%%%%%%%%%%%%%%%%%%%%%%%%%%%%%%%%%%%%%%%%%%%%%

Throughout this paper, we use Einstein's convention of summing over repeated indices only for components of the space variable $x\in\bR^3$ and of $\grad_x$ or $D_x$, as well as for components of the 
velocity field $u\equiv u(t,x)\in\bR^3$, but never for other indices, such as particle labels.

\begin{Lem}\lb{L-CrossTerm}
Let $R_{\hbar,N}^{in}\in\cD_s(\fH_N)$ satisfy 
$$
\Tr_{\fH_N}\left((R_{\hbar,N}^{in})^{1/2}\left(I+\sum_{n=1}^N-\tfrac12\hbar^2\Dlt^{(n)}\right)^2(R_{\hbar,N}^{in})^{1/2}\right)<\infty\,,
$$
and let $R_{\hbar,N}(t):=e^{-it\scrH_N/\hbar}R_{\hbar,N}^{in}e^{+it\scrH_N/\hbar}$. Then, for each $t\ge 0$, one has
$$
\ba
\Tr_\fH(R_{\hbar,N:1}(t)^{\frac12}(u_j\vee\hbar D_j)R_{\hbar,N:1}(t)^{\frac12})
\\
-\Tr_\fH((R^{in}_{\hbar,N:1})^{\frac12}(u_j\vee\hbar D_j)(R^{in}_{\hbar,N:1})^{\frac12})
\\
=-\tfrac{N-1}N\int_0^t\iint_{\bR^6}(u_j(s,x_1)-u_j(s,x_2))\d_jV(x_1-x_2)\rho_{\hbar,N:2}(s,x_1,x_2)dx_1dx_2ds
\\
+\int_0^t\Tr((\tfrac12\hbar D_k\vee\d_ku_j-u_k\d_ku_j-\d_jV\star_x\rho)\vee\hbar D_j)R_{\hbar,N:1}(s))ds&\,.
\ea
$$
In the formula above, $u_j\equiv u_j(t,x)$ designates the $j$-th component of the vector $u(t,x)\in\bR^3$, while $\d_j$ (resp. $D_j$) designates the $j$-th component of the vector $\grad_x$ (resp. $-i\grad_x$),
for $j=1,2,3$. 
\end{Lem}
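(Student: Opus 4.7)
The proof is a direct Heisenberg calculation: I differentiate the cross-term in time via the product rule. Viewing the observable as acting on the full $N$-particle system, set $A_1(t):=u_j(t,x_1)\vee\hbar D_{x_1,j}$ (acting on particle $1$, identity on the rest). By cyclicity of the trace and self-adjointness of $R_{\hbar,N:1}(t)$,
$$F(t):=\Tr_\fH(R_{\hbar,N:1}(t)^{1/2}(u_j\vee\hbar D_j)R_{\hbar,N:1}(t)^{1/2})=\Tr_{\fH_N}(A_1(t)R_{\hbar,N}(t))\,.$$
Differentiating in $t$ using $i\hbar\,\d_tR_{\hbar,N}=[\scrH_N,R_{\hbar,N}]$ and the Euler-Poisson momentum equation $\d_tu_j=-u_k\d_ku_j-\d_jV\star_x\rho$ gives
$$F'(t)=\Tr_{\fH_N}((\d_tu_j\vee\hbar D_j)R_{\hbar,N}(t))+\tfrac1{i\hbar}\Tr_{\fH_N}([A_1,\scrH_N]R_{\hbar,N}(t))\,.$$

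For the kinetic commutator, $A_1$ acts only on particle $1$, so only $H_0^{(1)}=-\tfrac12\hbar^2\Dlt^{(1)}=\tfrac12(\hbar D_k)(\hbar D_k)$ contributes. Since $[\hbar D_j,H_0^{(1)}]=0$, the Leibniz rule for the anti-commutator $\vee$ reduces the computation to $[u_j,H_0^{(1)}]$. Using the basic identity $[u_j,\hbar D_k]=i\hbar\d_ku_j$, I get
$$[u_j,H_0^{(1)}]=\tfrac{i\hbar}2(\hbar D_k\vee\d_ku_j)\,,\qquad\tfrac1{i\hbar}[u_j\vee\hbar D_j,H_0^{(1)}]=(\tfrac12\hbar D_k\vee\d_ku_j)\vee\hbar D_j$$
(with implicit summation over $k$). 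Combined with the $(\d_tu_j\vee\hbar D_j)$ term and the Euler-Poisson equation for $\d_tu_j$, this produces exactly the second integrand in the claim.

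For the interaction commutator, $A_1$ commutes with $V(x_n-x_m)$ whenever neither index is $1$, so only the $N-1$ pairs $(1,m)$ survive, and by exchange symmetry of $R_{\hbar,N}$ they all contribute equally. Because $u_j(t,x_1)$ and $V(x_1-x_m)$ are both multiplication operators in $x_1$ (hence commute), a short computation yields $\tfrac1{i\hbar}[A_1,V(x_1-x_m)]=-2u_j(t,x_1)\d_jV(x_1-x_m)$. Taking the partial trace down to $R_{\hbar,N:2}$ and symmetrizing in $x_1\leftrightarrow x_2$ using (i) the exchange symmetry of $\rho_{\hbar,N:2}$ and (ii) the oddness of $\grad V$, the expression
$$-2\tfrac{N-1}N\iint_{\bR^6}u_j(t,x_1)\d_jV(x_1-x_2)\rho_{\hbar,N:2}(t,x_1,x_2)\,dx_1dx_2$$
becomes the first integrand in the claim. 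Integrating from $0$ to $t$ finishes the argument.

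The only real obstacle is technical: justifying the differentiation-under-the-trace identities for the unbounded observable $A_1$ (through $\hbar D_j$) paired with the unbounded Hamiltonian $\scrH_N$. This is handled exactly as in Lemma \ref{L-EnergyCons}: the hypothesis on $R^{in}_{\hbar,N}$, together with Kato's relative boundedness of the Coulomb potential with respect to the free Laplacian, makes $\scrH_N R_{\hbar,N}(t)^{1/2}$ and $R_{\hbar,N}(t)^{1/2}\hbar D_{x_j}$ Hilbert-Schmidt, so every commutator product appearing above is trace class and the manipulations are legitimate.
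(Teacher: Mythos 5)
Your proposal gets the algebra exactly right, and it coincides in structure with the paper's proof: recasting the cross-term as $\Tr_{\fH_N}(A_1(t)R_{\hbar,N}(t))$ with $A_1=u_j\vee\hbar D_{x_1,j}$, differentiating via the $N$-body von Neumann flow and the Euler--Poisson momentum equation, obtaining the kinetic commutator $(\tfrac12\hbar D_k\vee\partial_ku_j)\vee\hbar D_j$, and reducing the interaction commutator to $-2u_j\partial_j V(x_1-x_m)$ followed by symmetrization using exchange symmetry of $\rho_{\hbar,N:2}$ and the oddness of $\nabla V$. All of this is correct.

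The gap is in your final paragraph, where you wave through the justification by invoking Kato's relative boundedness of $V$. What Hardy's inequality provides is a bound $\|V(x_1-x_2)\psi\|_{L^2}\lesssim\|\nabla_{x_1}\psi\|_{L^2}$, i.e.\ control of $V\sim|z|^{-1}$ by one derivative; combined with your hypothesis this does give $V_{12}R_{\hbar,N}^{1/2}\in\cL^2(\fH_N)$. But the interaction commutator produces $\partial_j V(x_1-x_2)\sim|x_1-x_2|^{-2}$, and the Rellich-type inequality that would control $|z|^{-2}$ by $\Delta$ in $L^2$ \emph{fails} in dimension $3$ (it requires $d\ge 5$). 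Consequently the quantity $\Tr\big(u_j\,\partial_j V_{12}\,R_{\hbar,N:2}\big)=\iint u_j(x_1)\partial_j V(x_1-x_2)\rho_{\hbar,N:2}\,dx_1dx_2$ is \emph{not} a priori a finite, well-defined number, and the step where you first extract this trace and only afterwards symmetrize is not legitimate. The integrand becomes controllable only \emph{after} symmetrization, when the factor $u_j(x_1)-u_j(x_2)$ donates a Lipschitz power of $|x_1-x_2|$, turning $\partial_jV$ into something dominated by $V$, which the conserved potential energy from Lemma~\ref{L-EnergyCons} bounds.

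The paper resolves this by regularizing the momentum operator: it sets $D^\eps_x=Z_\eps D_x$ with $Z_\eps$ a bounded mollifier, so that $u_j\vee\hbar D^\eps_j$ is a bounded operator and the interaction commutator becomes $-2u_j\partial_j V^\eps(x_1-x_2)$ with $\partial_j V^\eps$ bounded. The full computation, including the symmetrization, is then done at fixed $\eps>0$ with every trace manifestly finite, and only at the very end does one let $\eps\to0$. The passage to the limit in the interaction term uses the crucial pointwise bound (\ref{BoundGradVeps}), namely $|(u_j(x_1)-u_j(x_2))\partial_j V^\eps(x_1-x_2)|\le C\|\nabla u\|_\infty V(x_1-x_2)$ uniformly in $\eps$, together with the finiteness of $\iint V\rho_{\hbar,N:2}$, to invoke dominated convergence. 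Your proof needs this regularization (or an equivalent truncation device) inserted before the commutator computation; the energy hypothesis alone does not make the unsymmetrized $\partial_j V$ trace meaningful.
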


\begin{proof}
Since $\hbar D_j R_{\hbar,N:1}(t)^{1/2}\in\cL^2(\fH)$, one has 
$$
u_j\vee\hbar D_jR_{\hbar,N:1}(t)=2u_j\hbar D_jR_{\hbar,N:1}(t)+[\hbar D_j,u_j]R_{\hbar,N:1}(t)\in\cL^1(\fH)\,,
$$ 
and
$$
\ba
\Tr_\fH(R_{\hbar,N:1}(t)^{1/2}(u_j\vee\hbar D_j)R_{\hbar,N:1}(t)^{1/2})=\Tr_\fH((u_j\vee\hbar D_j)R_{\hbar,N:1}(t))
\\
=\Tr(((u_j\vee\hbar D_j)\otimes I_{\fH_{N-1}})R_{\hbar,N}(t))&\,.
\ea
$$
Write
$$
R_{\hbar,N}(t)=e^{-it\scrK_N/\hbar}\left(R_{\hbar,N}^{in}+\tfrac1{i\hbar}S_{\hbar,N}(t)\right)e^{it\scrK_N/\hbar}
$$
with
$$
\scrK_N:=\sum_{n=1}^N-\tfrac12\hbar^2\Dlt^{(n)}\,,
$$
and
$$
S_{\hbar,N}(t):=\tfrac1N\sum_{1\le k<l\le N}\int_0^te^{is\scrK_N/\hbar}[V_{kl},R_{\hbar,N}(s)]e^{-is\scrK_N/\hbar}ds\,.
$$
For each $\eps>0$, set $D^\eps_x:=Z_\eps D_x$, where $Z_\eps\psi(x):=\zeta_\eps\star\psi(x)$ for each $\psi\in\fH$, with $\zeta_\eps$ being a (real-valued), nonincreasing radial mollifier --- so that in 
particular $Z_\eps=Z_\eps^*$. Observe that
$$
\ba
\Tr_{\fH_N}(((u_j\vee\hbar D^\eps_j)\otimes I_{\fH_{N-1}})R_{\hbar,N}(t))
\\
=\Tr_{\fH_N}\left(e^{it\scrK_N/\hbar}((u_j\vee\hbar D^\eps_j)\otimes I_{\fH_{N-1}})e^{-it\scrK_N/\hbar}\left(R_{\hbar,N}^{in}+\tfrac1{i\hbar}S_{\hbar,N}(t)\right)\right)&\,;
\ea
$$
since $u\in C([0,T];W^{1,\infty}(\bR^3))$ is a solution of \eqref{EulP}, so that $\d_tu\in L^\infty([0,T]\times\bR^3)$, while $\scrH_NR_{\hbar,N}(t)$ and $R_{\hbar,N}(t)\scrH_N$ belong to $\cL^1(\fH_N)$ 
for all $t\in\bR$, one has
$$
\ba
\frac{d}{dt}\Tr_{\fH_N}(((u_j\vee\hbar D^\eps_j)\otimes I_{\fH_{N-1}})R_{\hbar,N}(t))
\\
=\Tr_{\fH_N}(((\d_tu_j\vee\hbar D^\eps_j)\otimes I_{\fH_{N-1}})R_{\hbar,N}(t))
\\
+\Tr_{\fH_N}\left(e^{it\scrK_N/\hbar}(\tfrac{i}\hbar[-\tfrac12\hbar^2\Dlt,u_j\vee\hbar D^\eps_j]\otimes I_{\fH_{N-1}})e^{-it\scrK_N/\hbar}\left(R_{\hbar,N}^{in}+\tfrac1{i\hbar}S_{\hbar,N}(t)\right)\right)
\\
+\tfrac1{i\hbar}\Tr_{\fH_N}\left(e^{it\scrK_N/\hbar}((u_j\vee\hbar D^\eps_j)\otimes _{\fH_{N-1}})e^{-it\scrK_N/\hbar}\frac{d}{dt}S_{\hbar,N}(t)\right)
\\
=\Tr_{\fH_N}(((\d_tu_j\vee\hbar D^\eps_j)\otimes I_{\fH_{N-1}})R_{\hbar,N}(t))
\\
+\Tr_{\fH_N}((\tfrac{i}\hbar[-\tfrac12\hbar^2\Dlt,u_j\vee\hbar D^\eps_j]\otimes I_{\fH_{N-1}})R_{\hbar,N}(t))
\\
+\tfrac1{i\hbar}\Tr_{\fH_N}\left(((u_j\vee\hbar D^\eps_j)\otimes I_{\fH_{N-1}})\tfrac1N\sum_{1\le k<l\le N}[V_{kl},R_{\hbar,N}(t)]\right)&\,.
\ea
$$

We compute successively
$$
\d_tu_j\vee\hbar D^\eps_j=-(u_k\d_ku_j+\d_jV\star_x\rho)\vee\hbar D^\eps_j\,,
$$
and
$$
\tfrac{i}\hbar[-\tfrac12\hbar^2\Dlt,u_j\vee\hbar D^\eps_j]=\tfrac{i}\hbar[-\tfrac12\hbar^2\Dlt,u_j]\vee\hbar D^\eps_j=\tfrac12(\hbar D_k\vee\d_ku_j)\vee\hbar D^\eps_j\,.
$$

Next we observe that $l>k>1\implies[V_{kl},(u_j\vee\hbar D^\eps_j)\otimes I_{\fH_{N-1}}]=0$, so that
$$
\ba
\Tr_{\fH_N}\left(((u_j\vee\hbar D^\eps_j)\otimes I_{\fH_{N-1}})\tfrac1N\sum_{1\le k<l\le N}[V_{kl},R_{\hbar,N}(t)]\right)
\\
=\Tr_{\fH_N}\left(((u_j\vee\hbar D^\eps_j)\otimes I_{\fH_{N-1}})\tfrac1N\sum_{l=2}^N[V_{1l},R_{\hbar,N}(t)]\right)
\\
=\Tr_{\fH_2}(((u_j\vee\hbar D^\eps_j)\otimes I)\tfrac{N-1}N[V_{12},R_{\hbar,N:2}(t)])
\\
=\tfrac12\Tr_{\fH_2}(((u_j\vee\hbar D^\eps_j)\otimes I+I\otimes(u_j\vee\hbar D^\eps_j))\tfrac{N-1}N[V_{12},R_{\hbar,N:2}(t)])
\\
=-\Tr_{\fH_2}(\tfrac12[V_{12},(u_j\vee\hbar D^\eps_j)\otimes I+I\otimes(u_j\vee\hbar D^\eps_j)]\tfrac{N-1}NR_{\hbar,N:2}(t))&\,.
\ea
$$
The second equality follows from the fact that $R_{\hbar,N}(t)\in\cD_s(\fH_N)$, and the third equality from the symmetry of $[V_{12},R_{\hbar,N:2}(t)]$ since $V$ is an even function and 
$R_{\hbar,N:2}(t)\in\cD_s(\fH_2)$. Then
$$
\ba
[V_{12},(u_j\vee\hbar D^\eps_j)\otimes I+I\otimes(u_j\vee\hbar D^\eps_j)]
\\
=(u_j\otimes I)\vee[V_{12},\hbar D^\eps_j\otimes I]+(I\otimes u_j)\vee[V_{12},I\otimes\hbar D^\eps_j]&\,,
\ea
$$
and since $V$ is even while $Z_\eps:=(I-\eps\Dlt)^{-1/2}$ is a self-adjoint convolution operator, 
$$
\ba
{}[V_{12},\hbar D^\eps_j\otimes I]=[V^\eps_{12},\hbar D_j\otimes I]=-\hbar(D_jV^\eps)_{12}\,,
\\
[V_{12},I\otimes\hbar D^\eps_j]=[V^\eps_{12},I\otimes\hbar D_j]=+\hbar(D_jV^\eps)_{12}\,.
\ea
$$
Finally, one arrives at the formula
$$
\ba
\tfrac12[V_{12},(u_j\vee\hbar D^\eps_j)\otimes I+I\otimes(u_j\vee\hbar D^\eps_j)]
\\
=i\hbar(u_j(t,x_1)-u_j(t,x_2))\d_jV^\eps(x_1-x_2)&\,.
\ea
$$
Summarizing, we have proved that
$$
\ba
\frac{d}{dt}\Tr(R_{\hbar,N:1}(t)^{1/2}(u_j\vee\hbar D^\eps_j)R_{\hbar,N:1}(t)^{1/2})
\\
=-\Tr((u_k\d_ku_j+\d_jV\star_x\rho)\vee\hbar D^\eps_j)R_{\hbar,N:1}(t))
\\
+\Tr((\tfrac12(\hbar D_k\vee\d_ku_j)\vee\hbar D^\eps_j)R_{\hbar,N:1}(t))
\\
-\Tr((u_j(t,x_1)-u_j(t,x_2))\d_jV^\eps(x_1-x_2)\tfrac{N-1}NR_{\hbar,N:2}(t))&\,.
\ea
$$

At this point, we integrate both sides of this equality over $[0,t]$ and let $\eps\to 0$ in the r.h.s. of the equality. Since $(\rho,u)\in L^\infty([0,T];L^\infty(\bR^3)\times W^{1,\infty}(\bR^3)^3)$ while 
$t\mapsto\scrH_NR_{\hbar,N}(t)$ and $t\mapsto R_{\hbar,N}(t)\scrH_N$ belong to $C(\bR;\cL^1(\fH_N))$,
$$
\ba
\Tr((u_k\d_ku_j+\d_jV\star_x\rho)\vee\hbar D^\eps_j)R_{\hbar,N:1}(t))
\\
\to\Tr((u_k\d_ku_j+\d_jV\star_x\rho)\vee\hbar D_j)R_{\hbar,N:1}(t))
\\
\Tr((\tfrac12(\hbar D_k\vee\d_ku_j)\vee\hbar D^\eps_j)R_{\hbar,N:1}(t))
\\
\to\Tr((\tfrac12(\hbar D_k\vee\d_ku_j)\vee\hbar D_j)R_{\hbar,N:1}(t))
\ea
$$
uniformly on $[0,T]$ as $\eps\to 0$. As for the last term,
$$
\ba
\Tr((u_j(t,x_1)-u_j(t,x_2))\d_jV^\eps(x_1-x_2)R_{\hbar,N:2}(t))
\\
=\iint_{\bR^6}(u_j(t,x_1)-u_j(t,x_2))\d_jV^\eps(x_1-x_2)\rho_{\hbar,N:2}(t,x_1,x_2)dx_1dx_2&\,,
\ea
$$
and we already know from the conservation of energy that
$$
\iint_{\bR^6}V(x_1-x_2)\rho_{\hbar,N:2}(t,x_1,x_2)dx_1dx_2<\infty\,.
$$
Since
\be\lb{BoundGradVeps}
|(u_j(t,x_1)-u_j(t,x_2))\d_jV^\eps(x_1-x_2)|\le C\|\grad_xu\|_{L^\infty([0,T]\times\bR^3)}V(x_1-x_2)
\ee
for all $t\in[0,T]$ and all $x_1\not=x_2\in\bR^3$ (see below a proof of \eqref{BoundGradVeps}), we conclude by dominated convergence that
$$
\ba
\int_0^t\Tr_{\fH_2}((u_j(s,x_1)-u_j(s,x_2))\d_jV^\eps(x_1-x_2)R_{\hbar,N:2}(s))ds
\\
=\int_0^t\iint_{\bR^6}(u_j(s,x_1)-u_j(s,x_2))\d_jV^\eps(x_1-x_2)\rho_{\hbar,N:2}(s,x_1,x_2)dx_1dx_2ds
\\
\to\int_0^t\iint_{\bR^6}(u_j(s,x_1)-u_j(s,x_2))\d_jV(x_1-x_2)\rho_{\hbar,N:2}(s,x_1,x_2)dx_1dx_2ds
\ea
$$
as $\eps\to 0$.
\end{proof}

\begin{proof}[Proof of \eqref{BoundGradVeps}]
For each $\eps>0$ and each $x\in\bR^3$, one has
$$
4\pi|\grad V^\eps(x)|\le\int_{\bR^3}\frac{\zeta_\eps(z)dz}{|x-z|^2}\,.
$$
Since $\zeta_\eps(z):=\eps^{-3}\zeta(z/\eps)$, where $\zeta$ is a nonnegative, radial nonincreasing function on $\bR^3$ such that
$$
\int_{\bR^3}\zeta(z)dz=1\quad\text{ and }\quad\Supp(\zeta)\subset B(0,1)\,.
$$
If $\eps\le|x|/2$, then
$$
\int_{\bR^3}\frac{\zeta_\eps(z)dz}{|x-z|^2}\le\int_{|z|<|x|/2}\frac{\zeta_\eps(z)dz}{(|x|-|z|)^2}\le\int_{\bR^3}\frac{\zeta_\eps(z)dz}{(|x|-\frac12|x|)^2}=\frac4{|x|^2}\int_{\bR^3}\zeta_\eps(z)dz=\frac4{|x|^2}\,.
$$
If $\eps>|x|/2$, since $z\mapsto|z|^{-2}$ is the symmetric-decreasing rearrangement of $z\mapsto|x-z|^{-2}$ and since $\zeta_\eps$ is it own symetric-decreasing rearrangement, 
$$
\int_{\bR^3}\frac{\zeta_\eps(z)dz}{|x-z|^2}\le\int_{\bR^3}\frac{\zeta_\eps(z)dz}{|z|^2}=\frac1{\eps^2}\int_{|y|\le 1}\frac{\zeta(y)dy}{|y|^2}\le\frac4{|x|^2}\int_{|y|\le 1}\frac{\zeta(y)dy}{|y|^2}\,,
$$
by Theorem 3.4 in \cite{LiebLoss}. Hence
$$
|x||\grad V^\eps(x)|\le CV(x)\,,\qquad\text{ with }C:=4\int_{\bR^3}\frac{\zeta(y)dy}{|y|^2}\,.
$$
\end{proof}

%%%%%%%%%%%%%%%%%%%%%%%%%%%%%%%%%%%%%%%%%%%%%%%%%%%%%%%%%%%%%%%%%%%%%%%%%%%%%%%%%%%%%%%%%%%%%%%%%%%%%%%%%%%%

\subsection{Step 3: consequences of the local conservation of mass}

%%%%%%%%%%%%%%%%%%%%%%%%%%%%%%%%%%%%%%%%%%%%%%%%%%%%%%%%%%%%%%%%%%%%%%%%%%%%%%%%%%%%%%%%%%%%%%%%%%%%%%%%%%%%

\begin{Lem}\lb{L-MassCons}
Assume that $R_{\hbar,N}^{in}\in\cD_s(\fH_N)$ satisfies 
$$
\Tr_{\fH_N}\left((R_{\hbar,N}^{in})^{1/2}\left(I+\sum_{j=1}^N-\tfrac12\hbar^2\Dlt_j\right)^2(R_{\hbar,N}^{in})^{1/2}\right)<\infty\,.
$$
and let $R_{\hbar,N}(t):=e^{-it\scrH_N/\hbar}R_{\hbar,N}^{in}e^{+it\scrH_N/\hbar}$. Let $R_{\hbar,N:1}(t)$ be the first marginal of $R_{\hbar,N}(t)$, and let $\rho_{\hbar,N:1}(t,\cdot)$ and $J_{\hbar,N:1}(t,\cdot)$
be the density function and the current of $R_{\hbar,N}(t)$. Then
$$
\d_t\rho_{\hbar,N:1}(t,x_1)+\Div_{x_1}J_{\hbar,N:1}(t,dx_1)=0\,,\quad\text{ in }\cD'((0,T)\times\bR^3)\,.
$$
\end{Lem}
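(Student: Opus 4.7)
The plan is to test the von Neumann equation \eqref{NvN} against a bounded multiplication operator $\phi(x_1) \otimes I_{\fH_{N-1}}$ for $\phi \in C_c^\infty(\bR^3)$ and exploit the fact that the pairwise Coulomb interactions $V(x_j - x_k)$, being themselves multiplication operators, commute with $\phi(x_1) \otimes I$. Only the single-particle kinetic term on the first factor survives in the resulting commutator, and this single-particle commutator is precisely what generates the divergence of the current.

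Concretely, set
$$F_\phi(t) := \int_{\bR^3} \phi(x)\,\rho_{\hbar,N:1}(t,x) dx = \Tr_{\fH_N}\bigl((\phi(x_1) \otimes I_{\fH_{N-1}})\, R_{\hbar,N}(t)\bigr).$$
First I would check, exactly as in the opening computations of the proof of Lemma \ref{L-CrossTerm}, that the second-moment hypothesis on $R_{\hbar,N}^{in}$ ensures $\scrH_N R_{\hbar,N}(t) \in \cL^1(\fH_N)$ for all $t$, which legitimizes differentiating under the trace:
$$\dot F_\phi(t) = \tfrac{1}{i\hbar}\,\Tr_{\fH_N}\bigl([\phi(x_1) \otimes I_{\fH_{N-1}},\, \scrH_N]\, R_{\hbar,N}(t)\bigr).$$
Next I compute the commutator: $[\phi(x_1), V(x_j - x_k)] = 0$ for every pair $j<k$, and $[\phi(x_1), -\tfrac12\hbar^2 \Delta_{x_j}] = 0$ for $j \neq 1$, leaving only the one-body contribution
$$\tfrac{1}{i\hbar}\bigl[\phi(x_1), -\tfrac12\hbar^2 \Delta_{x_1}\bigr] = \tfrac12 \sum_{k=1}^3 (\partial_k \phi)(x_1) \vee \hbar D_{k,x_1},$$
verified by direct calculation on smooth test functions, using $D_x = -i\grad_x$ and the anti-commutator notation \eqref{AntiCom}.

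Passing to the first marginal and invoking the very definition \eqref{DefCurrent} of the current then yields
$$\dot F_\phi(t) = \sum_{k=1}^3 \Tr_\fH\bigl((\partial_k\phi \vee \tfrac12\hbar D_k)\, R_{\hbar,N:1}(t)\bigr) = \sum_{k=1}^3 \int_{\bR^3} (\partial_k\phi)(x)\, J^k_{\hbar,N:1}(t, dx).$$
Testing against an arbitrary time cutoff $\chi \in C_c^\infty((0,T))$ and integrating by parts in $t$ produces precisely the distributional identity $\d_t \rho_{\hbar,N:1} + \Div_{x_1} J_{\hbar,N:1} = 0$ evaluated on the tensor-product test function $\chi(t)\phi(x)$, and density of such tensor products in $C_c^\infty((0,T) \times \bR^3)$ concludes. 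The main obstacle is of a technical rather than conceptual nature: rigorously handling the unbounded operator $\hbar D_{x_1}$ inside the trace. Exactly as in Lemma \ref{L-CrossTerm}, I would if necessary first replace $D_{x_1}$ by the mollified operator $D_{x_1}^\eps = Z_\eps D_{x_1}$, carry out the commutator identities in this bounded setting, and then let $\eps \to 0$ by dominated convergence. This step is considerably simpler than in Lemma \ref{L-CrossTerm}, because $\phi$ is compactly supported and smooth and no two-body potential term enters the computation.
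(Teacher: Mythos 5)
Your proof is correct and follows essentially the same route as the paper's: testing $R_{\hbar,N}(t)$ against a one-body multiplication operator, discarding all interaction terms and all kinetic terms except the one acting on the first factor (since they commute with the test multiplication operator), computing the remaining one-body commutator to produce the anticommutator $\tfrac12(\d_k\phi)\vee\hbar D_k$, and then invoking the definition \eqref{DefCurrent} of the current. The paper uses test functions $a\in C^1_b(\bR^3)$ rather than $C_c^\infty$ and does not spell out the $\eps$-mollification you mention (relying instead on $R_{\hbar,N}\in C(\bR;\Dom(\scrH_N))\cap C^1(\bR;\fH_N)$ to justify differentiating under the trace), but these are presentational differences, not substantive ones.
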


\begin{proof}
Let $a\in C^1_b(\bR^3)$. Since $R_{\hbar,N}\in C(\bR;\Dom(\scrH_N))\cap C^1(\bR;\fH_N)$, one has
$$
\ba
\frac{d}{dt}\int_{\bR^3}a(x_1)\rho_{\hbar,N:1}(t,x_1)dx_1=&\frac{d}{dt}\Tr_{\fH}(aR_{\hbar,N:1}(t))
\\
=&\frac{d}{dt}\Tr_{\fH_N}((a\otimes I_{\fH_{N-1}})R_{\hbar,N}(t))
\\
=&-\tfrac1{i\hbar}\sum_{n=1}^N\Tr_{\fH_N}([-\tfrac12\hbar^2\Dlt^{(n)},a\otimes I_{\fH_{N-1}}]R_{\hbar,N}(t))
\\
=&-\tfrac1{i\hbar}\Tr_{\fH_N}(([-\tfrac12\hbar^2\Dlt,a]\otimes I_{\fH_{N-1}})R_{\hbar,N}(t))
\\
=&-\tfrac1{i\hbar}\Tr_{\fH}([-\tfrac12\hbar^2\Dlt,a]R_{\hbar,N:1}(t))
\\
=&\Tr_{\fH}(\tfrac12\hbar D_k\vee[\d_k,a]R_{\hbar,N:1}(t))
\\
=&\Tr_{\fH}(\tfrac12(\hbar D_k\vee\d_ka)R_{\hbar,N:1}(t))
\\
=&\int_{\bR^3}\d_ka(x_1)J^k_{\hbar,N:1}(t,dx_1)\,.
\ea
$$
\end{proof}

\smallskip
Using Lemma \ref{L-MassCons} with $a=|u(t,\cdot)|^2$ implies that
\be\lb{Tru2R1}
\ba
\frac{d}{dt}\Tr_{\fH}(|u(t,x)|^2R_{\hbar,N:1}(t))=\frac{d}{dt}\int_{\bR^3}|u(t,x)|^2\rho_{\hbar,N:1}(t,x_1)dx_1
\\
=-2\int_{\bR^3}(u_j(t,x_1)(u_k\d_ku_j(t,x_1)+\d_jV\star_x\rho(t,x_1)))\rho_{\hbar,N:1}(t,x_1)dx_1
\\
+2\int_{\bR^3}(u_j\d_ku_j)(t,x_1)J_{\hbar,N:1}^k(t,dx_1)&\,.
\ea
\ee

Next we use Lemma \ref{L-MassCons} with $a=V\star_x\rho(t,\cdot)$ to compute
\be\lb{dtEnerPot}
\ba
\frac{d}{dt}\int_{\bR^3}\!(V\!\star_x\!\rho)(t,x)(\rho(t,x)\!-\!2\rho_{\hbar,N:1}(t,x))dx
\\
=2\!\int_{\bR^3}\!\rho(t,x)u_j(t,x)\d_jV\!\star_x\!\rho(t,x)dx
\\
+2\!\int_{\bR^3}\!\d_jV\!\star_x\!(\rho u_j)(t,x)\rho_{\hbar,N:1}(t,x)dx
\\
-2\!\int_{\bR^3}\!\d_k(V\!\star_x\!\rho)(t,x)J^k_{\hbar,N:1}(t,dx)&\,.
\ea
\ee

\bigskip
Putting together the energy conservation (Lemma \ref{L-EnergyCons}), the computation of the cross term (Lemma \ref{L-CrossTerm}), \eqref{Tru2R1} and \eqref{dtEnerPot}, we arrive at the following formula
for the evolution of the modulated energy $\cE[R_{\hbar,N},\rho,u]$:
$$
\ba
\cE[R_{\hbar,N},\rho,u](t)-\cE[R_{\hbar,N},\rho,u](0)
\\
=\tfrac{N-1}N\int_0^t\iint_{\bR^6}(u_j(s,x_1)-u_j(s,x_2))\d_jV(x_1-x_2)\rho_{\hbar,N:2}(s,x_1,x_2)dx_1dx_2ds
\\
-\!\int_0^t\!\Tr\left(R_{\hbar,N:1}(s)^{\frac12}(\tfrac12\hbar D_k\vee\d_ku_j\!-\!u_k\d_ku_j\!-\!\d_jV\star_x\rho)\vee\hbar D_j)R_{\hbar,N:1}(s)^{\frac12}\right)ds
\\
-2\int_0^t\int_{\bR^3}(u_j(s,x_1)(u_k\d_ku_j(s,x_1)+\d_jV\star_x\rho(s,x_1)))\rho_{\hbar,N:1}(s,x_1)dx_1ds
\\
+2\int_0^t\int_{\bR^3}(u_j\d_ku_j)(s,x_1)J_{\hbar,N:1}^k(s,dx_1)ds
\\
+2\!\int_0^t\int_{\bR^3}\!\rho(s,x)u_j(s,x)\d_jV\!\star_x\!\rho(s,x)dxds
\\
-2\!\int_0^t\int_{\bR^3}\!\rho(s,x)u_j(s,x)\d_jV\!\star_x\!\rho_{\hbar,N:1}(s,x)dxds
\\
-2\!\int_0^t\int_{\bR^3}\!\d_k(V\!\star_x\!\rho)(s,x)J^k_{\hbar,N:1}(s,dx)ds&\,.
\ea
$$
The various terms on the right hand side can be grouped in a slightly more convenient manner:
$$
\ba
\cE[R_{\hbar,N},\rho,u](t)-\cE[R_{\hbar,N},\rho,u](0)
\\
=\tfrac{N-1}N\int_0^t\iint_{\bR^6}(u_j(s,x_1)-u_j(s,x_2))\d_jV(x_1-x_2)\rho_{\hbar,N:2}(s,x_1,x_2)dx_1dx_2ds
\\
+2\int_0^t\!\int_{\bR^3}\!\rho u_j\d_jV\!\star_x\!\rho(s,x)dxds+2\int_0^t\!\int_{\bR^3}\!(\d_jV\!\star_x(\rho u_j))\rho_{\hbar,N:1}(s,x)dxds
\\
-2\int_0^t\int_{\bR^3}(u_j\d_jV\star_x\rho)\rho_{\hbar,N:1}(s,x)dxds
\\
-\int_0^t\Tr(R_{\hbar,N:1}(s)^{\frac12}((\tfrac12\hbar D_k\vee\d_ku_j)\vee\hbar D_j+2u_ju_k\d_ku_j)R_{\hbar,N:1}(s)^{\frac12})ds
\\
+\int_0^t\Tr(((u_k\d_ku_j)\vee\hbar D_j+(u_j\d_ku_j)\vee\hbar D_k)R_{\hbar,N:1}(s))ds&\,,
\ea
$$
since
$$
\ba
\Tr(((\d_jV\star_x\rho(s,\cdot))\vee\hbar D_j)R_{\hbar,N:1}(s))=2\!\int_{\bR^3}\!\d_k(V\!\star_x\!\rho)(s,x)J^k_{\hbar,N:1}(s,dx)&\,,
\\
\Tr(((u_j\d_ku_j)(s,\cdot)\vee\hbar D_k)R_{\hbar,N:1}(s))=2\int_{\bR^3}(u_j\d_ku_j)(s,x)J^k_{\hbar,N:1}(s,dx)&\,.
\ea
$$
Observe that
$$
\ba
2\int_{\bR^3}\!\rho u_j\d_jV\!\star_x\!\rho(s,x)dx=2\iint_{\bR^6}u_j(s,x_1)\d_jV(x_1-x_2)\rho(s,x_1)\rho(s,x_2)dx_1dx_2
\\
=\iint_{\bR^6}(u_j(s,x_1)-u_j(s,x_2)\d_jV(x_1-x_2)\rho(s,x_1)\rho(s,x_2)dx_1dx_2
\ea
$$
by symmetry (since $\d_jV$ is an odd function), while
$$
\ba
2\int_{\bR^3}\!(\d_jV\!\star_x(\rho u_j))\rho_{\hbar,N:1}(s,x)dx-2\int_{\bR^3}(u_j\d_jV\star_x\rho)\rho_{\hbar,N:1}(s,x)dx
\\
=2\iint_{\bR^6}(u_j(s,x_2)-u_j(s,x_1))\d_jV(x_1-x_2)\rho_{\hbar,N:1}(s,x_1)\rho(s,x_2)dx_1dx_2&\,.
\ea
$$

On the other hand
$$
\ba
(\tfrac12\hbar D_k\vee\d_ku_j)\vee\hbar D_j+2u_ju_k\d_ku_j)-(u_k\d_ku_j)\vee\hbar D_j-(u_j\d_ku_j)\vee\hbar D_k
\\
=\tfrac12\hbar D_k\d_ku_j\hbar D_j+\tfrac12\d_ku_j\hbar D_k\hbar D_j+\tfrac12\hbar D_j\hbar D_k\d_ku_j+\tfrac12\hbar D_j\d_ku_j\hbar D_k
\\
+2u_k\d_ku_ju_j-u_k\d_ku_j\hbar D_j-\hbar D_j u_k\d_ku_j-u_j\d_ku_j\hbar D_k-\hbar D_ku_j\d_ku_j
\\
=\tfrac12(\hbar D_k-u_k)\d_ku_j(\hbar D_j-u_j)+\tfrac12(\hbar D_j-u_j)\d_ku_j(\hbar D_k-u_k)
\\
+\tfrac12\d_ku_j(\hbar D_k-u_k)(\hbar D_j-u_j)+\tfrac12(\hbar D_j-u_j)(\hbar D_k-u_k)\d_ku_j
\\
=(\hbar D_k-u_k)\d_ku_j(\hbar D_j-u_j)+(\hbar D_j-u_j)\d_ku_j(\hbar D_k-u_k)
\\
-\tfrac12[\hbar D_k,\d_ku_j](\hbar D_j-u_j)+\tfrac12(\hbar D_j-u_j)[\hbar D_k,\d_ku_j]
\\
=2(\hbar D_k-u_k)\Si_{jk}(\hbar D_j-u_j)-\tfrac12\hbar^2\Dlt_x\Div_xu&\,,
\ea
$$
where $\Si_{jk}:=\tfrac12(\d_ku_j+\d_ju_k)$, since
$$
\ba
\tfrac12\d_ku_j(\hbar D_k)u_j+\tfrac12u_j(\hbar D_k)\d_ku_j-\tfrac12u_j\d_ku_j(\hbar D_k)-\tfrac12(\hbar D_k)u_j\d_ku_j
\\
=\tfrac12\d_ku_j[\hbar D_k,u_j]+\tfrac12[u_j,\hbar D_k]\d_ku_j=-\tfrac12i\hbar\d_ku_j\d_ku_j+\tfrac12i\hbar\d_ku_j\d_ku_j=0&\,.
\ea
$$

\bigskip
Summarizing, we have proved the following identity.

\begin{Prop}\lb{P-DtE}
Let $R_{\hbar,N}^{in}\in\cD_s(\fH_N)$ satisfy 
$$
\Tr_{\fH_N}\left((R_{\hbar,N}^{in})^{1/2}\left(I+\sum_{j=1}^N-\tfrac12\hbar^2\Dlt_j\right)^2(R_{\hbar,N}^{in})^{1/2}\right)<\infty\,,
$$
and let $R_{\hbar,N}(t):=e^{-it\scrH_N/\hbar}R_N^{in}e^{+it\scrH_N/\hbar}$. Then, for each $t\in[0,T]$, one has
$$
\ba
\cE[R_{\hbar,N},\rho,u](t)-\cE[R_{\hbar,N},\rho,u](0)
\\
=-2\int_0^t\Tr\left(R_{\hbar,N:1}(s)^{\frac12}(\hbar D_k-u_k)\Si_{jk}(\hbar D_j-u_j)R_{\hbar,N:1}(s)^{\frac12}\right)ds
\\
+\tfrac12\hbar^2\int_0^t\int_{\bR^3}(\Dlt_x\Div_xu)(s,x)\rho_{\hbar,N:1}(s,x)dxds
\\
+\int_0^t\iint_{\bR^6}(u_j(s,x_1)-u_j(s,x_2))\d_jV(x_1-x_2)\big(\tfrac{N-1}N\rho_{\hbar,N:2}(s,x_1,x_2)
\\
+\rho(s,x_1)\rho(s,x_2)-2\rho_{\hbar,N:1}(s,x_1)\rho(s,x_2)\big)dx_1dx_2ds&\,.
\ea
$$
\end{Prop}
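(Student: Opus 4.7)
The plan is to expand the modulated energy as a sum of four terms, differentiate each one using the three lemmas of Section \ref{S-PropaEstim}, and then recombine algebraically. First I would expand
$$
|\hbar D_x-u(t,x)|^2=|\hbar D_x|^2-u_j(t,x)\vee\hbar D_j+|u(t,x)|^2,
$$
so that, using the definition of $\cE[R_{\hbar,N},0,0]$ computed at the end of the proof of Lemma \ref{L-EnergyCons},
$$
\cE[R_{\hbar,N},\rho,u](t)=\cE[R_{\hbar,N},0,0](t)-\Tr_\fH(R_{\hbar,N:1}(t)^{\frac12}(u_j\vee\hbar D_j)R_{\hbar,N:1}(t)^{\frac12})+\int_{\bR^3}|u(t,x)|^2\rho_{\hbar,N:1}(t,x)dx+\int_{\bR^3}(V\star_x\rho)(t,x)(\rho(t,x)-2\rho_{\hbar,N:1}(t,x))dx.
$$

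Next I would differentiate each of the four pieces: the first is constant in $t$ by Lemma \ref{L-EnergyCons}; the second has its time-variation given exactly by Lemma \ref{L-CrossTerm}; the third is handled by Lemma \ref{L-MassCons} applied to $a(x)=|u(t,x)|^2$, after using the Euler-Poisson momentum equation $\d_tu=-u_k\d_ku-\grad V\star_x\rho$ to account for the time-dependence of $a$, yielding identity \eqref{Tru2R1}; the fourth is handled by Lemma \ref{L-MassCons} applied to $a(x)=(V\star_x\rho)(t,x)$, using the continuity equation $\d_t\rho+\Div(\rho u)=0$ to differentiate $a$ in $t$, giving identity \eqref{dtEnerPot}. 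Integrating all four evolution equations over $[0,t]$ and adding them produces a rather long right-hand side that I would then group into a ``potential-energy part'' (all the terms involving $\d_jV$ against products of $\rho_{\hbar,N:k}$ and $\rho$) and a ``kinetic-energy part'' (everything else).

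For the potential-energy terms, the key simplification is to use the oddness of $\d_jV$ to symmetrize each double integral: the single integral $2\int\rho u_j\d_jV\star\rho$ becomes $\iint (u_j(x_1)-u_j(x_2))\d_jV(x_1-x_2)\rho(x_1)\rho(x_2)dx_1dx_2$, and similarly the pair $2\int(\d_jV\star(\rho u_j))\rho_{\hbar,N:1}-2\int(u_j\d_jV\star\rho)\rho_{\hbar,N:1}$ becomes $2\iint(u_j(x_2)-u_j(x_1))\d_jV(x_1-x_2)\rho_{\hbar,N:1}(x_1)\rho(x_2)dx_1dx_2$. Combined with the term from Lemma \ref{L-CrossTerm}, these precisely reconstruct the polynomial $\tfrac{N-1}N\rho_{\hbar,N:2}+\rho\otimes\rho-2\rho_{\hbar,N:1}\otimes\rho$ appearing in the statement.

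The step that requires the most care is the algebraic rearrangement of the kinetic-energy operator
$$
\tfrac12(\hbar D_k\vee\d_ku_j)\vee\hbar D_j+2u_ju_k\d_ku_j-(u_k\d_ku_j)\vee\hbar D_j-(u_j\d_ku_j)\vee\hbar D_k
$$
into the form $2(\hbar D_k-u_k)\Si_{jk}(\hbar D_j-u_j)-\tfrac12\hbar^2\Dlt_x\Div_xu$, where $\Si_{jk}:=\tfrac12(\d_ku_j+\d_ju_k)$. Here the strategy is to complete squares by pulling $u_k$ out of $\hbar D_k$ and $u_j$ out of $\hbar D_j$ on both sides of $\d_ku_j$, and then symmetrize in $j,k$; the lower-order remainders produced by the commutators $[\hbar D_k,\d_ku_j]=-i\hbar\d_k\d_ku_j$ cancel exactly against the zero-order symmetrization shown in the body of Step 3, except for the single scalar $-\tfrac12\hbar^2\Dlt_x\Div_xu$ whose trace against $R_{\hbar,N:1}(s)$ produces the $\tfrac12\hbar^2\int(\Dlt\Div u)\rho_{\hbar,N:1}$ term of the proposition. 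This is the main obstacle; all other computations are bookkeeping.
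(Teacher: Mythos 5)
Your proposal is correct and follows the paper's own route essentially step for step: you decompose $\cE[R_{\hbar,N},\rho,u]$ into the conserved kinetic-plus-interaction energy $\cE[R_{\hbar,N},0,0]$, the cross term, the $\int|u|^2\rho_{\hbar,N:1}$ term, and the potential cross term $\int(V\star\rho)(\rho-2\rho_{\hbar,N:1})$, handled respectively by Lemma~\ref{L-EnergyCons}, Lemma~\ref{L-CrossTerm}, \eqref{Tru2R1} and \eqref{dtEnerPot}; you then symmetrize the $\d_jV$ integrals using oddness and perform the operator rearrangement to extract $2(\hbar D_k-u_k)\Si_{jk}(\hbar D_j-u_j)-\tfrac12\hbar^2\Dlt_x\Div_x u$. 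You have also correctly singled out that last algebraic rearrangement (completing the square around $\hbar D-u$ and tracking the $O(\hbar^2)$ commutator remainder) as the only step requiring real care, which matches where the paper invests the corresponding effort.
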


%%%%%%%%%%%%%%%%%%%%%%%%%%%%%%%%%%%%%%%%%%%%%%%%%%%%%%%%%%%%%%%%%%%%%%%%%%%%%%%%%%%%%%%%%%%%%%%%%%%%%%%%%%%%

\subsection{Step 4: the Gronwall inequality}

%%%%%%%%%%%%%%%%%%%%%%%%%%%%%%%%%%%%%%%%%%%%%%%%%%%%%%%%%%%%%%%%%%%%%%%%%%%%%%%%%%%%%%%%%%%%%%%%%%%%%%%%%%%%

By the Cauchy-Schwarz inequality, one has
\be\lb{Kin'<CKin}
\ba
\left|\Tr\left(R_{\hbar,N:1}(s)^{\frac12}(\hbar D_k-u_k)\Si_{jk}(\hbar D_j-u_j)R_{\hbar,N:1}(s)^{\frac12}\right)\right|
\\
\le\|R_{\hbar,N:1}(s)^{\frac12}(\hbar D_k-u_k)\|_2\|\Si_{jk}(\hbar D_j-u_j)R_{\hbar,N:1}(s)^{\frac12}\|_2
\\
\le\sup_{1\le j,k\le 3}\|\Si_{jk}\|_{L^\infty([0,T]\times\bR^3)}\|R_{\hbar,N:1}(s)^{\frac12}(\hbar D_k-u_k)\|_2\|(\hbar D_j-u_j)R_{\hbar,N:1}(s)^{\frac12}\|_2
\\
\le\sup_{1\le j,k\le 3}\|\Si_{jk}\|_{L^\infty([0,T]\times\bR^3)}\Tr\left(R_{\hbar,N:1}(s)^{\frac12}|\hbar D-u(s,\cdot)|^2R_{\hbar,N:1}(s)^{\frac12}\right)&\,.
\ea
\ee
On the other hand
\be\lb{h2term}
\ba
\left|\int_0^t\int_{\bR^3}(\Dlt_x\Div_xu)(s,x)\rho_{\hbar,N:1}(s,x)dxds\right|
\\
\le\|\Dlt_x\Div_xu\|_{L^\infty([0,T]\times\bR^3)}\int_0^t\int_{\bR^3}\rho_{\hbar,N:1}(s,x)dxds
\\
=t\|\Dlt_x\Div_xu\|_{L^\infty([0,T]\times\bR^3)}&\,.
\ea
\ee

The remaining term, which comes from the modulated potential energy and involves the Coulomb interaction potential, is more involved. We recast this term as
\be\lb{DefcF}
\ba
\cF[R_{\hbar,N},\rho,u](t):=\iint_{\bR^6}V(x_1-x_2)\tfrac{N-1}N\rho_{\hbar,N:2}(t,x_1,x_2)dx_1dx_2
\\
+\int_{\bR^3}\rho(t,x_1)(V\star_x\rho)(t,x_1)dx_1-2\int_{\bR^3}\rho_{\hbar,N:1}(t,x_1)(V\star_x\rho)(t,x_1)dx_1
\\
=\iint_{\bR^6}V(x_1-x_2)\big(\tfrac{N-1}N\rho_{\hbar,N:2}(t,x_1,x_2)+\rho(t,x_1)\rho(t,x_2)
\\
-2\rho_{\hbar,N:1}(t,x_1)\rho(t,x_2)\big)dx_1dx_2&\,.
\ea
\ee
Likewise, we set
\be\lb{DefcF'}
\ba
\cF'[R_{\hbar,N},\rho,u](t):=\iint_{\bR^6}(u_j(t,x_1)-u_j(t,x_2))\d_jV(x_1-x_2)
\\
\times\big(\tfrac{N-1}N\rho_{\hbar,N:2}(t,x_1,x_2)+\rho(t,x_1)\rho(t,x_2)-2\rho_{\hbar,N:1}(t,x_1)\rho(t,x_2)\big)dx_1dx_2&\,.
\ea
\ee

The next lemma connects Serfaty's functional $F_N$ defined in formula (1.11) of \cite{DS} with our variant of the modulated potential energy $\cF_N$.

\begin{Lem}\lb{L-FcF}
For each $X_N=(x_1,\ldots,x_N)\in(\bR^3)^N$, set
$$
\mu_{X_N}:=\frac1N\sum_{n=1}^N\de_{x_k}\,,
$$
and, denoting by $\bD$ the diagonal of $\bR^3\times\bR^3$,
$$
F_N(X_N,\rho(t,\cdot)):=N^2\iint_{\bR^6\setminus\bD}V(x-y)(\mu_{X_N}(dx)-\rho(t,dx))(\mu_{X_N}(dy)-\rho(t,dy))\,.
$$
Likewise, set
$$
\ba
F'_N(X_N,(\rho,u)(t,\cdot)):=N^2\iint_{\bR^6\setminus\bD}&(u_j(t,x)-u_j(t,y))\d_jV(x-y)
\\
&\times(\mu_{X_N}(dx)-\rho(t,dx))(\mu_{X_N}(dy)-\rho(t,dy))\,.
\ea
$$
Then
$$
\cF[R_{\hbar,N},\rho,u](t)=\frac1{N^2}\int_{(\bR^3)^N}F_N(X_N,\rho(t,\cdot))\rho_{\hbar,N}(t,X_N)dX_N\,,
$$
and
$$
\cF'[R_{\hbar,N},\rho,u](t)=\frac1{N^2}\int_{(\bR^3)^N}F'_N(X_N,(\rho,u)(t,\cdot))\rho_{\hbar,N}(t,X_N)dX_N\,.
$$
\end{Lem}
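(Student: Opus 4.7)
The lemma is a direct expansion, so the plan is: (i) expand the tensor product of signed measures $(\mu_{X_N}(dx)-\rho(t,dx))(\mu_{X_N}(dy)-\rho(t,dy))$ into its four pieces, (ii) evaluate the Dirac-mass integrals arising from $\mu_{X_N}$, (iii) integrate the resulting polynomial in $X_N$ against the symmetric density $\rho_{\hbar,N}(t,X_N)dX_N$, and (iv) match the outcome term-by-term with \eqref{DefcF} (resp.\ \eqref{DefcF'}). The key structural inputs are the permutation symmetry of $\rho_{\hbar,N}(t,\cdot)$, inherited from $R_{\hbar,N}(t)\in\cD_s(\fH_N)$, the definitions of the marginals $\rho_{\hbar,N:1}$ and $\rho_{\hbar,N:2}$ by partial integration, and the normalization $\int_{(\bR^3)^N}\rho_{\hbar,N}(t,X_N)dX_N=\Tr_{\fH_N}R_{\hbar,N}(t)=1$.

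Carrying out the bookkeeping for $F_N$: the $\mu_{X_N}\otimes\mu_{X_N}$ piece restricted to $\bR^6\setminus\bD$ produces $\tfrac1{N^2}\sum_{j\ne k}V(x_j-x_k)$; the two mixed pieces, which coincide by the parity of $V$, each produce $\tfrac1N\sum_{n=1}^N(V\star_x\rho)(t,x_n)$; and the $\rho\otimes\rho$ piece produces the constant $\int_{\bR^3}\rho(t,x)(V\star_x\rho)(t,x)dx$. After multiplication by $N^2$ and integration against $\rho_{\hbar,N}(t,\cdot)dX_N$, the symmetry of the $N$-body density yields, respectively, $\tfrac{N(N-1)}{N^2}\iint_{\bR^6}V(x_1-x_2)\rho_{\hbar,N:2}(t,x_1,x_2)dx_1dx_2$, $-\tfrac{2N\cdot N}{N^2}\int_{\bR^3}\rho_{\hbar,N:1}(t,x)(V\star_x\rho)(t,x)dx$, and $\int_{\bR^3}\rho(t,x)(V\star_x\rho)(t,x)dx$. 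Dividing by $N^2$, the combinatorial factors collapse to $\tfrac{N-1}N$, $-2$, and $1$, and the sum matches \eqref{DefcF} exactly.

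The identity for $F'_N$ and $\cF'$ follows from the same computation with $V(x-y)$ replaced by the kernel $K(t,x,y):=(u_j(t,x)-u_j(t,y))\d_jV(x-y)$. This kernel is locally integrable, since $u\in W^{1,\infty}$ forces $|u_j(t,x)-u_j(t,y)|=O(|x-y|)$, which tempers the $|x-y|^{-2}$ singularity of $\d_jV$, and it is even under the swap $x\leftrightarrow y$ because both the velocity difference and $\d_jV$ are odd; the symmetrization step in the bookkeeping therefore goes through unchanged. The only delicate point throughout is the exclusion of $\bD$: it is necessary for the $\mu_{X_N}\otimes\mu_{X_N}$ piece, to discard the ill-defined self-interactions $V(0)$ (resp.\ $\d_jV(0)$), which is precisely what produces the restriction to $j\ne k$, but it is immaterial for the mixed and $\rho\otimes\rho$ pieces since $\rho(t,\cdot)\in L^\infty(\bR^3)$ assigns no mass to $\bD$. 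No substantive obstacle arises; the lemma is pure combinatorial bookkeeping once the diagonal is handled correctly.
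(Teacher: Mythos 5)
Your proof is correct and follows essentially the same route as the paper: expand the tensor product of signed measures into its four pieces, observe that the diagonal removal matters only for the $\mu_{X_N}\otimes\mu_{X_N}$ piece (and then only on a Lebesgue-null set of $X_N$'s where two coordinates coincide), use the symmetry of $\rho_{\hbar,N}(t,\cdot)$ to reduce the resulting sums to the first and second marginals, and match the $\tfrac{N-1}{N}$, $-2$, and $1$ factors against the definitions of $\cF$ and $\cF'$. The paper formalizes this by stating the identity first for a general even kernel $w$ with $|w(x,y)|\le c(1+|x-y|^{-1})$ and then specializing to $V$ and to $(u(t,x)-u(t,y))\cdot\grad V(x-y)$, which is the same bookkeeping you carried out twice.
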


\begin{proof}
Let $w\in C(\bR^3\times\bR^3\setminus\bD)$ satisfy 
$$
w(x,y)=w(y,x)\text{ and }|w(x,y)|\le c(1+\tfrac1{|x-y|})\qquad\text{ for each }(x,y)\in\bR^3\times\bR^3\setminus\bD
$$
for some constant $c>0$. Then
$$
\ba
\iint_{\bR^6\setminus\bD}w(x,y)(\mu_{X_N}(dx)\!-\!\rho(t,dx))(\mu_{X_N}(dy)\!-\!\rho(t,dy))
\\
=\!\!\iint_{\bR^6\setminus\bD}\!w(x,y)\mu_{X_N}(dx)\mu_{X_N}(dy)
\\
+\iint_{\bR^6}w(x,y)\rho(t,x)\rho(t,y)dxdy-2\iint_{\bR^6}w(x,y)\mu_{X_N}(dx)\rho(t,y)dy
\\
=\frac1{N^2}\sum_{1\le j\not=k\le N}w(x_i,x_j)-\frac2N\sum_{j=1}^N\int_{\bR^3}w(x_j,y)\rho(t,y)dy
\\
+\iint_{\bR^6}w(x,y)\rho(t,x)\rho(t,y)dxdy
\ea
$$
for a.e. $X_N\in(\bR^3)^N$. Since $|w(x,y)|=O(|x-y|^{-1})$ near $\bD$ and $\rho(t,\cdot)$ is continuous and bounded on $\bR^3$, the function
$$
x\mapsto\int_{\bR^3}w(x,y)\rho(t,y)dy
$$
is continuous on $\bR^3$, so that $(x,y)\mapsto w(x,y)\rho(t,x)\rho(t,y)$ is integrable on $\bR^6$. Hence
$$
\ba
\iint_{\bR^6\setminus\bD}w(x,y)\rho(t,x)\rho(t,y)dxdy=\iint_{\bR^6}w(x,y)\rho(t,x)\rho(t,y)dxdy\,,
\\
\iint_{\bR^6\setminus\bD}w(x,y)\mu_{X_N}(dx)\rho(t,y)dy=\iint_{\bR^6}w(x,y)\mu_{X_N}(dx)\rho(t,y)dy\,.
\ea
$$
The only place where removing the diagonal in the domain of integration is important is in the computation of
$$
\iint_{\bR^6\setminus\bD}w(x,y)\mu_{X_N}(dx)\mu_{X_N}(dy)=\frac1{N^2}\sum_{1\le j\not=k\le N}w(x_i,x_j)\,,
$$
provided that $X_N$ is such that $j\not=k\implies x_j\not=x_k$. This restriction obviously consists of removing from $(\bR^3)^N$ a finite union of linear varieties of codimension $3$, which is a 
Lebesgue-negligible set in $(\bR^3)^N$. Thus
$$
\ba
\int_{(\bR^3)^N}\!\!\iint_{\bR^6\setminus\bD}w(x,y)(\mu_{X_N}(dx)\!-\!\rho(t,dx))(\mu_{X_N}(dy)\!-\!\rho(t,dy))\rho_{\hbar,N}(t,X_N)dX_N
\\
=\iint_{\bR^6}w(x,y)\rho(t,x)\rho(t,y)dxdy\int_{(\bR^3)^N}\rho_{\hbar,N}(t,X_N)dX_N
\\
-\frac2N\int_{(\bR^3)^N}\left(\sum_{j=1}^N\int_{\bR^3}w(x_j,y)\rho(t,y)dy\right)\rho_{\hbar,N}(t,X_N)dX_N
\\
+\frac1{N^2}\int_{(\bR^3)^N}\sum_{1\le j\not=k\le N}w(x_i,x_j)\rho_{\hbar,N}(t,X_N)dX_N&\,.
\ea
$$
Since $\rho_N(t,\cdot)$ is a probability density on $(\bR^3)^N$, one has
$$
\ba
\iint_{\bR^6}w(x,y)\rho(t,x)\rho(t,y)dxdy\int_{\bR^3)^N}\rho_N(t,X_N)dX_N
\\
=\iint_{\bR^6}w(x,y)\rho(t,x)\rho(t,y)dxdy&\,.
\ea
$$
Since $R_{\hbar,N}(t)\in\cD_s(\fH_N)$ for each $t\in\bR$, the function $X_N\mapsto\rho_{\hbar,N}(t,X_N)$ is symmetric, so that
$$
\ba
\frac2N\int_{(\bR^3)^N}&\left(\sum_{j=1}^N\int_{\bR^3}w(x_j,y)\rho(t,y)dy\right)\rho_{\hbar,N}(t,X_N)dX_N
\\
&=2\int_{(\bR^3)^N}\left(\int_{\bR^3}w(x_1,y)\rho(t,y)dy\right)\rho_{\hbar,N}(t,X_N)dX_N
\\
&=2\int_{\bR^3}\left(\int_{\bR^3}w(x_1,x_2)\rho(t,x_2)dx_2\right)\rho_{\hbar,N:1}(t,x_1)dx_1\,,
\ea
$$
and, by the same token,
$$
\ba
\frac1{N^2}\int_{(\bR^3)^N}\sum_{1\le j\not=k\le N}w(x_i,x_j)\rho_{\hbar,N}(t,X_N)dX_N
\\
=\frac{N(N-1)}{N^2}\int_{(\bR^3)^N}w(x_1,x_2)\rho_{\hbar,N}(t,X_N)dX_N
\\
=\iint_{\bR^6}w(x_1,x_2)\tfrac{N-1}N\rho_{\hbar,N:2}(t,x_1,x_2)dx_1dx_2&\,.
\ea
$$
In other words,
$$
\ba
\int_{(\bR^3)^N}\!\!\iint_{\bR^6\setminus\bD}w(x,y)(\mu_{X_N}(dx)\!-\!\rho(t,dx))(\mu_{X_N}(dy)\!-\!\rho(t,dy))\rho_{\hbar,N}(t,X_N)dX_N
\\
=\int_{(\bR^3)^N}w(x_1,x_2)\big(\tfrac{N-1}N\rho_{\hbar,N:2}(t,x_1,x_2)+\rho(t,x_1)\rho(t,x_2)
\\
-2\rho_{\hbar,N:1}(t,x_1)\rho(t,x_2)\big)dx_1dx_2&\,.
\ea
$$
We conclude by applying this identity successively with $w(x,y)=V(x-y)$ and $w(x,y)=(u(t,x)-u(t,y))\cdot\grad V(x-y)$.
\end{proof}

\smallskip
At this point, we recall Serfaty's remarkable inequality, stated as Proposition 2.3 in \cite{DS}, in the case of the Coulomb potential in space dimension $3$ (i.e. for $s=1$ and $d=3$ in the notation of \cite{DS}).
This inequality is based on a very clever renormalization of the self-interaction of each particle, a precursor of which can be found in \cite{RougSerfaty}. However, the inequality below involves new important
ideas, perhaps the most important of which is a smearing of each point charge with a width that is not uniform for all charges: see formula (3.11) in \cite{DS}.

\bigskip
\noindent
\fbox{\sc Serfaty's inequality}

\smallskip
There exists $C>2$ such that, for all $\rho\in L^\infty(\bR^3)$, all $u\in W^{1,\infty}(\bR^3;\bR^3)$ and a.e. $X_N\in(\bR^3)^N$
\be\lb{SSIneq}
\ba
|F'_N(X_N,(\rho,u))|\le&C\|\grad u\|_{L^\infty(\bR^3)}F_N(X_N,\rho)
\\
&+C\left(1+\|\rho\|_{L^\infty(\bR^3)}\right)\left(1+\|u\|_{W^{1,\infty}(\bR^3)}\right)N^{5/3}\,.
\ea
\ee

\smallskip 
The functional $F(X_N,\rho(t,\cdot))$ is defined in terms of an integral over $\bR^6\setminus\bD$. Removing $\bD$ in the domain of integration is used in order to obtain a finite quantity; however the quantity
so defined may not be always nonnegative. This is fixed with the following inequality, taken from Corollary 3.4 \cite{DS}: there exists $C'>0$ such that
\be\lb{LBF}
F_N(X_N,\rho)\ge -C'(1+\|\rho\|_{L^\infty(\bR^3)})N^{4/3}\,.
\ee

\bigskip
Several remarks are in order at this point before going further in the proof of Theorem \ref{T-MFSCLim}. Most of these remarks are aimed at explaining the analogies and the differences between our approach
in the present work and the analysis in \cite{DS}.

\bigskip
%\newpage
\noindent
\textbf{Remarks.}

\smallskip
\noindent
(1) Serfaty's inequality \eqref{SSIneq} and \eqref{LBF} are the only ingredients from \cite{DS} used in the proof of Theorem \ref{T-MFSCLim}. This observation is obviously not aimed at diminishing the importance 
of \cite{DS} for our work, which is considerable, in the first place because \eqref{SSIneq} is the key which allows us to handle the Coulomb case. However, we think it worthwhile to point at some noticeable differences 
between the use of this inequality in the classical setting considered in \cite{DS} and in the quantum setting discussed here.

\smallskip
In \cite{DS}, the modulated potential energy $F_N(X_N,\rho(t,\cdot))$ is differentiated in time along a trajectory $t\mapsto X_N(t)$ of the gradient flow of the interaction energy
$$
\frac1N\sum_{1\le j<k\le N}V(x_j-x_k)\,.
$$
Similarly, in the appendix of \cite{DS}, the modulated total energy 
$$
\sum_{j=1}^N|\xi_j(t)-u(t,x_j(t))|^2+F(X_N(t),\rho(t,\cdot))
$$
is differentiated in time assuming that $t\mapsto (\xi_1(t),\ldots,\xi_N(t);X_N(t))$ is a phase-space trajectory of the $N$-particle Hamiltonian
$$
\sum_{j=1}^N|\xi_j|^2+\frac1N\sum_{1\le j<k\le N}V(x_j-x_k)\,.
$$
The inequality \eqref{SSIneq} is used to control the time-derivative $|\tfrac{d}{dt}F_N(X_N(t),\rho(t,\cdot))|$ in terms of the modulated potential energy $F_N(X_N(t),\rho(t,\cdot))$ itself, and therefore to bound
$F_N(X_N(t),\rho(t,\cdot))$ in terms of $F_N(X_N(0),\rho(0,\cdot))$ via Gronwall's lemma.

At variance with \cite{DS}, in the quantum setting considered here, the modulated potential energy $F_N(X_N,\rho(t,\cdot))$ is not used dynamically, since there is no notion of particle trajectory in quantum mechanics. 
Instead, the modulated potential energy $F_N(X_N,\rho(t,\cdot))$ enters the functional $\cF[R_{\hbar,N},\rho]$ as a test function, and \eqref{SSIneq} is used by duality. Moreover, the evolution of $\cF[R_{\hbar,N},\rho]$ 
is based on the $N$-particle quantum dynamics \eqref{NvN}, which differs from Newton's 2nd law written for each one of the $N$ particles. This is why the evolution of the functional $\cE[R_{\hbar,N},\rho,u]$, reported
in Proposition \ref{P-DtE}, must be established independently from the analogous computation in \cite{DS} --- notice in particular the $O(\hbar^2)$ term \eqref{h2term} which appears only in the quantum problem.

\smallskip
\noindent
(2) A simplifying feature in the definition of $\cE[R_{\hbar,N},\rho,u]$ is that its interaction part $\cF[R_{\hbar,N},\rho]$ involves only multiplication operators (i.e. potentials). This is precisely the reason why, even in 
the quantum setting, everything can be expressed in terms of the density \textit{function } $\rho_{\hbar,N}$ (instead of the density \textit{operator } $R_{\hbar,N}$) and of the empirical measure $\mu_{X_N}$ defined 
in Lemma \ref{L-FcF}. As a result, Serfaty's inequality \eqref{SSIneq} can be used in the quantum setting without the slightest modification.

\smallskip
\noindent
(3) If one wishes to follow exactly the computation in the appendix of \cite{DS}, one could probably use, instead of the functional $\cE[R_{\hbar,N},\rho,u]$, the formalism of ``quantum empirical measures'' recently 
defined in \cite{FGTPaulEmpir}. The ``quantum empirical measure'' is a time dependent continuous linear mapping 
$$
t\mapsto\cM_N(t)\in\cL(\cL(\fH),\cL_s(\fH_N))
$$ 
such that
$$
\Tr_{\fH_N}(R_{\hbar,N}(0)\cM_N(t)(A))=\Tr_{\fH}(R_{\hbar,N:1}(t)A)
$$
for each $A\in\cL(\fH)$. The quantity analogous to the functional $\tfrac1{N^2}H(Z_N,(\rho,u))$ considered in the appendix of \cite{DS} would be defined in terms of $\cM_N(t)$ as follows:
$$
''\,\,\cM_N(t)(|\hbar D_x-u(t,x)|^2)+\int_{\bR^3}\hat V(\om)|\cM_N(t)(E_\om)-\hat\rho(t,\om)I_{\fH_N})|^2\tfrac{d\om}{(2\pi)^d}\,\,,''
$$
where $E_\om\psi(x):=e^{i\om\cdot x}\psi(x)$ for all $\psi\in\fH$. This leads to several potentially unpleasant sources of technicalities which must be addressed. Firstly, $\cM_N(t)(A)$ is defined for all $A\in\cL(\fH)$, 
but the operator $|\hbar D_x-u(t,x)|^2$ is obviously not bounded. Therefore, even the definition of the first term in the functional above requires some additional care (for instance replacing $D_x$ with $D^\eps_x$ 
as done in Step 2 above). Another potential source of technical difficulties is found in the definition of the second term, where removing the diagonal from the domain of integration has to be translated somehow in 
terms of Fourier variables. Finally, the dynamical equation satisfied by $\cM_N(t)$ (equation (34) in \cite{FGTPaulEmpir}) has been established under the assumption that $\hat V\in L^1(\bR^3)$, a condition which
is obviously not verified by the Coulomb potential). Even after all these technical difficulties are handled satisfyingly, one should notice that, if $a\equiv a(x)$ is an element of $L^\infty(\bR^3)$, viewed as a bounded 
multiplication operator on $\fH$, then
$$
\ba
\Tr_{\fH_N}(R_{\hbar,N}(0)\cM_N(t)(a))&=\int_{\bR^3}a(x_1)\rho_{\hbar,N:1}(t,x_1)dx_1
\\
&=\int_{(\bR^3)^N}\la\mu_{X_N},a\ra\rho_{\hbar,N}(t,X_N)dX_N\,.
\ea
$$
In other words, since the interaction part of the modulated energy involves only multiplication operators, there is little more content in the quantum empirical measure $\cM_N(t)$  that in its classical analogue 
$\mu_{X_N}$. For all these reasons, we have chosen to avoid using this formalism here.

\smallskip
\noindent
(4) A last remark may be necessary for readers acquainted with mean-field limits in the classical setting, but unfamiliar with the formalism of BBGKY hierarchies. The formulation of the mean-field limit in terms 
of phase-space empirical measures (i.e. Klimontovich solutions) in the classical setting \cite{BraunHepp,Dobrushin} is in duality with the formulation of the same limit in terms of the BBGKY hierarchy: see for
instance Theorem 3.1 in \cite{FGMouhotRicci}. The connection between the quantum modulated energy $\cE[R_{\hbar,N},\rho,u]$ and the modulated potential energy $F(X_N,\rho(t,\cdot))$ from \cite{DS},
presented in Lemma \ref{L-FcF}, is explained by the quantum analogue of formula (32) in \cite{FGMouhotRicci} in the case $m=2$. In particular, this formula accounts for the combinatorial factor $\tfrac{N-1}N$
used in the definition of $\cE[R_{\hbar,N},\rho,u]$. While this term might seem strange at first sight because $\cF[R_{\hbar,N},\rho]$ does not vanish identically when $R_{\hbar,N}=(R_{\hbar})^{\otimes N}$,
in spite of the fact that such factorized states are supposedly the most favorable in the context of the mean-field limit, the $\tfrac{N-1}N$ factor definitely helps when computing the time-derivative of 
$\cE[R_{\hbar,N},\rho,u]$. More precisely, in the BBGKY formalism (see for instance section 1.10.1 in \cite{FGTwente}), the equation satisfied by $R_{\hbar,N:1}$ involves $R_{\hbar,N:2}$, the equation
satisfied by $R_{\hbar,N:2}$ involves $R_{\hbar,N:3}$ and so on. No marginal density operator $R_{\hbar,N:k}$ for $k=1,\ldots,N-1$ is expected to satisfy a closed equation, because of the interaction. It is 
a quite remarkable feature of the modulated energy functional $\cE[R_{\hbar,N},\rho,u]$, which involves the two first marginal density operators $R_{\hbar,N:1}$ and $R_{\hbar,N:2}$, that its time-derivative
is also expressed in terms of $R_{\hbar,N:1}$ and $R_{\hbar,N:2}$, and does not involves $R_{\hbar,N:3}$. In fact, using the specific combination 
$$
\tfrac{N-1}N\rho_{\hbar:2}(t,x_1,x_2)-2\rho_{\hbar:1}(t,x_1)\rho(t,x_2)+\rho(t,x_1)\rho(t,x_2)
$$
in the definition of $\cE[R_{\hbar,N},\rho,u]$ is precisely the reason why its evolution can be controlled by a Gronwall inequality, and not by a Cauchy-Kovalevska type argument, as is usual in most results 
based on BBGKY hierarchy techniques (see \cite{BEGMY} or section 1.11.2 in \cite{FGTwente} for a presentation of such arguments in the context described here, or \cite{Ukai} for the original presentation
of Cauchy-Kovalevska arguments on a different, albeit formally similar problem, namely the rigorous justification of the Boltzmann equation from the classical dynamics of $N$ identical hard spheres in the
so-called Boltzmann-Grad limit).

\bigskip
Returning to Proposition \ref{P-DtE}, we recast the variation in time of the modulated energy as follows, with the help of Lemma \ref{L-FcF}:
$$
\ba
\Tr_\fH(R_{\hbar,N:1}(t)^{1/2}|\hbar D_x-u(t,x)|^2R_{\hbar,N:1}(t)^{1/2})
\\
+\frac1{N^2}\int_{(\bR^3)^N}F_N(X_N,\rho(t,\cdot))\rho_{\hbar,N}(t,X_N)dX_N
\\
=\Tr_\fH(R_{\hbar,N:1}(0)^{1/2}|\hbar D_x-u(0,x)|^2R_{\hbar,N:1}(0)^{1/2})
\\
+\frac1{N^2}\int_{(\bR^3)^N}F_N(X_N,\rho(0,\cdot))\rho_{\hbar,N}(0,X_N)dX_N
\\
-2\int_0^t\Tr\left(R_{\hbar,N:1}(s)^{\frac12}(\hbar D_k-u_k)\Si_{jk}(\hbar D_j-u_j)R_{\hbar,N:1}(s)^{\frac12}\right)ds
\\
+\frac1{N^2}\int_0^t\int_{(\bR^3)^N}F'_N(X_N,(\rho,u)(s,\cdot))\rho_{\hbar,N}(s,X_N)dX_Nds
\\
+\tfrac12\hbar^2\int_0^t\int_{\bR^3}(\Dlt_x\Div_xu)(s,x)\rho_{\hbar,N:1}(s,x)dxds&\,.
\ea
$$
With \eqref{LBF}, since $\rho_{\hbar,N:1}(t,\cdot)$ is a probability density for all $t\in\bR$, so that its integral is $1$, we transform this identity into
$$
\ba
\Tr_\fH(R_{\hbar,N:1}(t)^{1/2}|\hbar D_x-u(t,x)|^2R_{\hbar,N:1}(t)^{1/2})
\\
+\frac1{N^2}\int_{(\bR^3)^N}\left(F_N(X_N,\rho(t,\cdot))+C'(1+\|\rho\|_{L^\infty([0,T]\times\bR^3)})N^{4/3}\right)\rho_{\hbar,N}(t,X_N)dX_N
\\
\le\Tr_\fH(R_{\hbar,N:1}(0)^{1/2}|\hbar D_x-u(0,x)|^2R_{\hbar,N:1}(0)^{1/2})
\\
+\frac1{N^2}\int_{(\bR^3)^N}\left(F_N(X_N,\rho(0,\cdot))+C'(1+\|\rho\|_{L^\infty([0,T]\times\bR^3)})N^{4/3}\right)\rho_{\hbar,N}(0,X_N)dX_N
\\
+2\int_0^t\left|\Tr\left(R_{\hbar,N:1}(s)^{\frac12}(\hbar D_k-u_k)\Si_{jk}(\hbar D_j-u_j)R_{\hbar,N:1}(s)^{\frac12}\right)\right|ds
\\
+\frac1{N^2}\int_{(\bR^3)^N}|F'_N(X_N,(\rho,u)(t,\cdot))|\rho_{\hbar,N}(t,X_N)dX_N
\\
+\tfrac12\hbar^2\int_0^t\int_{\bR^3}\left|(\Dlt_x\Div_xu)(s,x)\right|\rho_{\hbar,N:1}(s,x)dxds&\,.
\ea
$$
Using successively \eqref{Kin'<CKin}, Serfaty's inequality above, \eqref{LBF} and \eqref{h2term}, we find that
$$
\ba
\Tr_\fH(R_{\hbar,N:1}(t)^{1/2}|\hbar D_x-u(t,x)|^2R_{\hbar,N:1}(t)^{1/2})
\\
+\int_{(\bR^3)^N}\left(\frac{F_N(X_N,\rho(t,\cdot))}{N^2}+\frac{C'(1+\|\rho\|_{L^\infty([0,T]\times\bR^3)})}{N^{2/3}}\right)\rho_{\hbar,N}(t,X_N)dX_N
\\
\le\Tr_\fH(R_{\hbar,N:1}(0)^{1/2}|\hbar D_x-u(0,x)|^2R_{\hbar,N:1}(0)^{1/2})
\\
+\int_{(\bR^3)^N}\left(\frac{F_N(X_N,\rho(0,\cdot))}{N^2}+\frac{C'(1+\|\rho\|_{L^\infty([0,T]\times\bR^3)})}{N^{2/3}}\right)\rho_{\hbar,N}(0,X_N)dX_N
\\
+2\sup_{1\le j,k\le 3}\|\Si_{jk}\|_{L^\infty([0,T]\times\bR^3)}\int_0^t\Tr\left(R_{\hbar,N:1}(s)^{\frac12}|\hbar D_k-u_k|^2R_{\hbar,N:1}(s)^{\frac12}\right)ds
\\
+C\|\grad u(t,\cdot)\|_{L^\infty(\bR^3)}\int_0^t\int_{(\bR^3)^N}\frac{F_N(X_N,(\rho,u)(t,\cdot))}{N^2}\rho_{\hbar,N}(t,X_N)dX_N
\\
+\frac{Ct}{N^{1/3}}\left(1+\|\rho\|_{L^\infty([0,T]\times\bR^3)}\right)\left(1+\|u\|_{W^{1,\infty}([0,T]\times\bR^3)}\right)
\\
+\tfrac12\hbar^2t\|\Dlt_x\Div_xu\|_{L^\infty([0,T]\times\bR^3)}&\,.
\ea
$$
Applying Gronwall's lemma leads to the following intermediate statement.

\begin{Prop}\lb{P-PropaEstim}
Let $R_{\hbar,N}^{in}\in\cD_s(\fH_N)$ satisfy 
$$
\Tr_{\fH_N}\left((R_{\hbar,N}^{in})^{1/2}\left(I+\sum_{j=1}^N-\tfrac12\hbar^2\Dlt_j\right)^2(R_{\hbar,N}^{in})^{1/2}\right)<\infty\,,
$$
and let $R_{\hbar,N}(t):=e^{-it\scrH_N/\hbar}R_N^{in}e^{+it\scrH_N/\hbar}$. Then, for each $t\in[0,T]$, each $\hbar>0$ and each $N>1$, one has
$$
\ba
\Tr_\fH(R_{\hbar,N:1}(t)^{1/2}|\hbar D_x-u(t,x)|^2R_{\hbar,N:1}(t)^{1/2})
\\
+\int_{(\bR^3)^N}\left(\frac{F_N(X_N,\rho(t,\cdot))}{N^2}+\frac{C'(1+\|\rho\|_{L^\infty([0,T]\times\bR^3)})}{N^{2/3}}\right)\rho_{\hbar,N}(t,X_N)dX_N
\\
\le\exp(CT\|\grad u\|_{L^\infty([0,T]\times\bR^3)})\Bigg(\Tr_\fH((R^{in}_{\hbar,N:1})^{1/2}|\hbar D_x-u^{in}(x)|^2(R^{in}_{\hbar,N:1})^{1/2})
\\
+\int_{(\bR^3)^N}\left(\frac{F_N(X_N,\rho^{in})}{N^2}+\frac{C'(1+\|\rho\|_{L^\infty([0,T]\times\bR^3)})}{N^{2/3}}\right)\rho^{in}_{\hbar,N}(X_N)dX_N\Bigg)
\\
+\frac{CT\exp(CT\|\grad u\|_{L^\infty([0,T]\times\bR^3)})}{N^{1/3}}\left(1+\|\rho\|_{L^\infty([0,T]\times\bR^3)}\right)\left(1+\|u\|_{W^{1,\infty}([0,T]\times\bR^3)}\right)
\\
+\tfrac12\hbar^2T\exp(CT\|\grad u\|_{L^\infty([0,T]\times\bR^3)})\|\Dlt_x\Div_xu\|_{L^\infty([0,T]\times\bR^3)}&\,.
\ea
$$
\end{Prop}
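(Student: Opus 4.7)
The plan is to combine the evolution identity in Proposition \ref{P-DtE}, the estimates \eqref{Kin'<CKin} and \eqref{h2term}, the reformulation of the potential terms via Lemma \ref{L-FcF}, and the two inputs from \cite{DS} (Serfaty's inequality \eqref{SSIneq} and the lower bound \eqref{LBF}), and then close via Gronwall's lemma.

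First I would integrate Proposition \ref{P-DtE} and rewrite the potential-energy contribution on the right hand side: Lemma \ref{L-FcF} identifies the cross-potential integral in Proposition \ref{P-DtE} with $\cF'[R_{\hbar,N},\rho,u](s) = \tfrac{1}{N^2}\int F'_N(X_N,(\rho,u)(s,\cdot))\rho_{\hbar,N}(s,X_N)dX_N$, and similarly the $\cF$-term that hides inside the definition of $\cE[R_{\hbar,N},\rho,u]$ is $\tfrac1{N^2}\int F_N(X_N,\rho(t,\cdot))\rho_{\hbar,N}(t,X_N)\,dX_N$. The identity in Proposition \ref{P-DtE} thus becomes an equation relating $\cE[R_{\hbar,N},\rho,u](t)$ to $\cE[R_{\hbar,N},\rho,u](0)$, a time integral of the kinetic cross-term, a time integral of $\tfrac1{N^2}\int F'_N\,\rho_{\hbar,N}\,dX_N$, and the $O(\hbar^2)$ remainder.

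Next I would bound each of the three error terms. The kinetic cross-term is bounded pointwise in $s$ by $\sup_{j,k}\|\Sigma_{jk}\|_{L^\infty}$ times the modulated kinetic energy, as in \eqref{Kin'<CKin}. The $\hbar^2$ term is bounded using \eqref{h2term} by $\tfrac12\hbar^2 t\|\Delta_x\Div_x u\|_{L^\infty([0,T]\times\bR^3)}$ (since $\rho_{\hbar,N:1}(s,\cdot)$ integrates to $1$). For the potential cross-term, I would apply Serfaty's inequality \eqref{SSIneq} pointwise in $X_N$ and integrate against $\rho_{\hbar,N}(s,\cdot)$; this gives
\[
\tfrac1{N^2}\!\int_{(\bR^3)^N}\!|F'_N(X_N,(\rho,u)(s,\cdot))|\rho_{\hbar,N}(s,X_N)dX_N \le C\|\grad u\|_{L^\infty}\tfrac1{N^2}\!\int F_N\,\rho_{\hbar,N}dX_N + \tfrac{C}{N^{1/3}}(1+\|\rho\|_\infty)(1+\|u\|_{W^{1,\infty}}).
\]

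The main obstacle is that $F_N(X_N,\rho)$ is not a priori nonnegative, so invoking Gronwall's lemma directly on $\cE$ is not legitimate: the modulated kinetic energy is nonnegative, but the $F_N$-integral could be very negative, making the sign structure ambiguous. The fix is the lower bound \eqref{LBF}, $F_N(X_N,\rho)\ge -C'(1+\|\rho\|_{L^\infty})N^{4/3}$. Adding the quantity $C'(1+\|\rho\|_{L^\infty([0,T]\times\bR^3)})/N^{2/3}$ to both sides of the integrated identity (which uses only that $\rho_{\hbar,N}(s,\cdot)$ is a probability density) converts the integrand into a nonnegative one, so that the quantity
\[
\Phi(s) := \Tr_\fH(R_{\hbar,N:1}(s)^{1/2}|\hbar D_x-u(s,x)|^2R_{\hbar,N:1}(s)^{1/2}) + \tfrac1{N^2}\!\int\!\big(F_N(X_N,\rho(s,\cdot))+C'(1+\|\rho\|_\infty)N^{4/3}\big)\rho_{\hbar,N}(s,X_N)dX_N
\]
is nonnegative, and the estimates obtained in the previous paragraph show that $\Phi(t) \le \Phi(0) + A\int_0^t\Phi(s)ds + B$, where $A = C\max(2\sup\|\Sigma_{jk}\|_{L^\infty},\|\grad u\|_{L^\infty})$ and $B$ collects the $O(N^{-1/3})$ and $O(\hbar^2)$ remainders together with $t$ times the constants from \eqref{h2term}. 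Applying the classical Gronwall lemma produces exactly the bound stated in Proposition \ref{P-PropaEstim}, with the exponential factor $\exp(CT\|\grad u\|_{L^\infty})$ coming from the integrating factor and the additive terms multiplied by the same exponential.
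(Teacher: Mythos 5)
Your proposal follows essentially the same route as the paper: integrate Proposition~\ref{P-DtE}, use Lemma~\ref{L-FcF} to rewrite the potential terms as $\tfrac1{N^2}\int F_N\,\rho_{\hbar,N}$ and $\tfrac1{N^2}\int F'_N\,\rho_{\hbar,N}$, bound the kinetic cross-term by \eqref{Kin'<CKin} and the $\hbar^2$ remainder by \eqref{h2term}, apply Serfaty's inequality \eqref{SSIneq} pointwise in $X_N$ and integrate against $\rho_{\hbar,N}$, use \eqref{LBF} (with $\int\rho_{\hbar,N}=1$) to add $C'(1+\|\rho\|_{L^\infty})/N^{2/3}$ and make the controlled quantity nonnegative, and close with Gronwall. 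This is exactly the paper's argument, and you correctly identify the sign issue resolved by \eqref{LBF} as the crucial point; the constant bookkeeping ($2\sup\|\Si_{jk}\|\le C\|\grad u\|$ since $C>2$) is also consistent with the stated final bound.
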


%%%%%%%%%%%%%%%%%%%%%%%%%%%%%%%%%%%%%%%%%%%%%%%%%%%%%%%%%%%%%%%%%%%%%%%%%%%%%%%%%%%%%%%%%%%%%%%%%%%%%%%%%%%%

\section{Proof of Theorem \ref{T-MFSCLim}}\lb{S-ProofT}

%%%%%%%%%%%%%%%%%%%%%%%%%%%%%%%%%%%%%%%%%%%%%%%%%%%%%%%%%%%%%%%%%%%%%%%%%%%%%%%%%%%%%%%%%%%%%%%%%%%%%%%%%%%%

The propagation estimate obtained in Proposition \ref{P-PropaEstim} is by far the most important part of the proof of Theorem \ref{T-MFSCLim}. How to conclude from this estimate differs noticeably from 
the argument in \cite{DS}. Indeed, this argument is based on Proposition 3.5 in \cite{DS}, by which a weak seminorm of the difference between $\rho(t,\cdot)$ and the empirical measure of the $N$-tuple 
of particle positions is bounded in terms of $F_N/N^2$ modulo a $o(1)$ term. More precisely
$$
\left|\La\tfrac1N\sum_{j=1}^N\de_{x_j(t)}\!-\!\rho(t,\cdot),\phi\Ra\right|\!\!\le\!C\|\phi\|_{C^{0,\a}\cap\dot H^1(\bR^3)}\bigg(\frac{F_N(X_N(t),\rho(t))}{N^2}\!+\!\frac{1\!\!+\!\!\|\rho(t,\cdot)\|_{L^\infty(\bR^3)}}{N^{2/3}}\bigg).
$$
This argument cannot be adapted to the quantum setting considered here (in the first place, because there is no analogue of the particle trajectories $t\mapsto X_N(t)$). Our approach differs again from that of \cite{DS}
from now on.

\subsection{Step 1: vanishing of the modulated energy}

%%%%%%%%%%%%%%%%%%%%%%%%%%%%%%%%%%%%%%%%%%%%%%%%%%%%%%%%%%%%%%%%%%%%%%%%%%%%%%%%%%%%%%%%%%%%%%%%%%%%%%%%%%%%

Assumption \eqref{EPrho0} implies in particular that 
$$
\iint_{\bR^6}\frac{\rho^{in}_{\hbar}(x)\rho^{in}_{\hbar}(y)}{|x-y|}dxdy<\infty\,.
$$
Moreover, since $\rho^{in}\in L^1\cap L^\infty(\bR^3)$, the function $V\star\rho^{in}$ belongs to $C_b(\bR^3)$ (the set of bounded continuous functions on $\bR^3$), so that
$$
\int_{\bR^3}\rho^{in}_{\hbar}(x)|V\star\rho^{in}(x)|dx\le\|V\star\rho^{in}\|_{L^\infty(\bR^3)}\int_{\bR^3}\rho^{in}_{\hbar}(x)dx=\|V\star\rho^{in}\|_{L^\infty(\bR^3)}\,.
$$
Hence \eqref{EPrho0} implies that
$$
\iint_{\bR^6}\frac{\rho^{in}_{\hbar}(x)\rho^{in}_{\hbar}(y)}{|x-y|}dxdy\le 3\|V\star\rho^{in}\|_{L^\infty(\bR^3)}+o(1)
$$
as $\hbar\to 0$ --- in particular
$$
\sup_{0<\hbar<1}\iint_{\bR^6}\frac{\rho^{in}_{\hbar}(x)\rho^{in}_{\hbar}(y)}{|x-y|}dxdy<\infty\,.
$$
Hence
$$
\ba
\cF[R_{\hbar,N},\rho](0)
\\
=\iint_{\bR^6}V(x_1-x_2)\left(\tfrac{N-1}N\rho^{in}_{\hbar}(x_1)\rho^{in}_{\hbar}(x_2)+\rho^{in}(x_1)\rho^{in}(x_2)-2\rho^{in}_{\hbar}(x_1)\rho^{in}(x_2)\right)dx_1dx_2
\\
=\iint_{\bR^6}V(x_1-x_2)(\rho^{in}_{\hbar}(x_1)-\rho^{in}(x_1))(\rho^{in}_{\hbar}(x_2)-\rho^{in}(x_2))dx_1dx_2
\\
-\frac1N\iint_{\bR^6}V(x_1-x_2)\rho^{in}_{\hbar}(x_1)\rho^{in}_{\hbar}(x_2)dx_1dx_2=o(1)+O(1/N)
\ea
$$
as $\tfrac1N+\hbar\to 0$. With \eqref{Monokin0}, this implies that
$$
\cE[R_{\hbar,N},\rho,u](0)\to 0\qquad\text{ as }\tfrac1N+\hbar\to 0\,.
$$
By Proposition \ref{P-PropaEstim} and Lemma \ref{L-FcF}, we conclude that, for each $t\in[0,T]$, one has
\be\lb{Eto0}
\cE[R_{\hbar,N},\rho,u](t)\to 0\qquad\text{ as }\tfrac1N+\hbar\to 0\,.
\ee

\subsection{Step 2: Proof of \eqref{Cvrhot}}

%%%%%%%%%%%%%%%%%%%%%%%%%%%%%%%%%%%%%%%%%%%%%%%%%%%%%%%%%%%%%%%%%%%%%%%%%%%%%%%%%%%%%%%%%%%%%%%%%%%%%%%%%%%%

Equivalently
$$
\cE[R_{\hbar,N},\rho,u](t)+\frac{C'(1+\|\rho\|_{L^\infty([0,T]\times\bR^3)})}{N^{2/3}}\to 0\qquad\text{ as }\tfrac1N+\hbar\to 0\,,
$$
for all $t\in[0,T]$, which implies in particular that both
\be\lb{VanishF}
\cF[R_{\hbar,N},\rho,](t)+\frac{C'(1+\|\rho\|_{L^\infty([0,T]\times\bR^3)})}{N^{2/3}}\to 0\qquad\text{ as }\tfrac1N+\hbar\to 0\,,
\ee
and
\be\lb{VanishMonokin}
\Tr_\fH(R_{\hbar,N:1}(t)^{1/2}|\hbar D_x-u(t,x)|^2R_{\hbar,N:1}(t)^{1/2})\to 0\qquad\text{ as }\tfrac1N+\hbar\to 0\,,
\ee
in the limit as $\tfrac1N+\hbar\to 0$ for all $t\in[0,T]$. In the present section, we analyze the consequences of \eqref{VanishF}, and postpone the discussion of \eqref{VanishMonokin} until the next section.

\bigskip
It will be convenient to use the following representation of the Coulomb potential.

\bigskip
\noindent
\fbox{\sc Smooth variant of the Fefferman-de la Llave formula} 

For each $x\not=y\in\bR^3$, one has
\be\lb{smFdlL}
V(x-y)=\frac1{4\pi|x-y|}=\int_0^\infty\int_{\bR^3}G_{r}(x-z)G_r(y-z)dzdr\,,
\ee
with the notation
$$
G_r(x):=(2\pi r)^{-3/2}\exp(-|x|^2/2r)\,.
$$

\bigskip
This is formula (3.3) of \cite{PRSS}. (Here is a quick ``formal'' argument to recover this formula: first, one has
$$
\int_0^\infty e^{r\Dlt}dr=(-\Dlt)^{-1}\,.
$$
Next we recall that
$$
\text{integral kernel of }(-\Dlt)^{-1}=\frac1{4\pi|x-y|}\,,\quad\text{ integral kernel of }e^{-r\Dlt/2}=G_r(x-y)\,.
$$
Hence
$$
\int_{\bR^3}G_{r}(x-z)G_r(y-z)dz=\text{ integral kernel of }e^{r\Dlt/2}e^{r\Dlt/2}=e^{r\Dlt}\,,
$$
and the the desired formula follows from the first identity above.) For the sake of being complete, we also recall the original Fefferman-de la Llave formula
\be\lb{FdlL}
V(x-y)=\tfrac1{4\pi^2}\int_0^\infty\frac1{r^5}\left(\int_{\bR^3}\indc_{|x-z|<r}\indc_{|y-z|<r}dz\right)dr\,,\quad\text{ for all }x\not=y\in\bR^3\,.
\ee
However, in the present paper, we shall use the smooth variant \eqref{smFdlL} of the Fefferman-de la Llave formula rather than the original formula \eqref{FdlL}.

\bigskip
For $\eta>0$, set
$$
V_\eta(x-y):=\int_\eta^\infty\int_{\bR^3}G_{r}(x-z)G_r(y-z)dzdr\,,
$$
which is the integral kernel of $e^{\eta\Dlt}(-\Dlt)^{-1}$, i.e.
$$
V_\eta(X)=\int_{\bR^3}G_\eta(X-Y)\frac{dY}{4\pi|Y|}\,.
$$
Since $\rho_{\hbar,N:2}(t,x_1,x_2)\ge 0$ for a.e. $(x_1,x_2)\in\bR^6$ and $\rho(t,x)\ge 0$ for all $x\in\bR^3$, one has
$$
\ba
\cF[R_{\hbar,N},\rho](t)\ge\iint_{\bR^6}V_\eta(x_1-x_2)
\\
\times\big(\tfrac{N-1}N\rho_{\hbar,N:2}(t,x_1,x_2)\!-\!2\rho_{\hbar,N:1}(t,x_1)\rho(t,x_2)\!+\!\!\rho(t,x_1)\rho(t,x_2)\big)dx_1dx_2
\\
-2\iint_{\bR^6}(V-V_\eta)(x_1-x_2)\rho_{\hbar,N:1}(t,x_1)\rho(t,x_2)dx_1dx_2
\\
=\cF_{1,\eta}(t)+\cF_{2,\eta}(t)&\,.
\ea
$$

Observe that
$$
\ba
\iint_{\bR^6}(V-V_\eta)(x_1-x_2)\rho_{\hbar,N:1}(t,x_1)\rho(t,x_2)dx_1dx_2
\\
=\int_{\bR^3}\rho_{\hbar,N:1}(t,x_1)\left(\int_0^\eta\int_{\bR^3}G_{2r}(x_1-x_2)\rho(t,x_2)dx_2dr\right)dx_1
\\
=\int_{\bR^3}\rho_{\hbar,N:1}(t,x_1)\left(\int_0^\eta e^{r\Dlt}\rho(t,x_1)dr\right)dx_1&\,,
\ea
$$
so that
$$
\ba
\left|\iint_{\bR^6}(V-V_\eta)(x_1-x_2)\rho_{\hbar,N:1}(t,x_1)\rho(t,x_2)dx_1dx_2\right|
\\
\le\|\rho_{\hbar,N:1}(t,\cdot)\|_{L^1(\bR^3)}\left\|\int_0^\eta e^{r\Dlt}\rho(t,x_1)dr\right\|_{L^\infty(\bR^3)}
\\
\le\|\rho_{\hbar,N:1}(t,\cdot)\|_{L^1(\bR^3)}\int_0^\eta\left\|e^{r\Dlt}\rho(t,x_1)\right\|_{L^\infty(\bR^3)}dr
\\
\le\eta\|\rho_{\hbar,N:1}(t,\cdot)\|_{L^1(\bR^3)}\|\rho(t,.)\|_{L^\infty(\bR^3)}
\\
=\eta\|\rho\|_{L^\infty([0,T]\times\bR^3)}&\,,
\ea
$$
since the heat equation satisfies the Maximum Principle. Thus
$$
|\cF_{2,\eta}(t)|\le 2\eta\|\rho\|_{L^\infty([0,T]\times\bR^3)}\,.
$$

Now for $\cF_{1,\eta}(t)$. One has
$$
\ba
\int_{(\bR^3)^N}\rho_{\hbar,N}(t,X_N)\iint_{\bR^6}V_\eta(q-q')\left(\frac1N\sum_{j=1}^N\de_{x_j}-\rho(t,\cdot)\right)^{\otimes 2}\!\!\!\!\!(dqdq')dX_N
\\
=\iint_{\bR^6}\tfrac{N-1}N\rho_{\hbar,N:2}(t,x_1,x_2)V_\eta(x_1-x_2)dx_1dx_2+\tfrac1N V_\eta(0)
\\
-2\iint_{\bR^6}\rho_{\hbar,N:1}(t,x_1)\rho(t,x_2)V_\eta(x_1-x_2)dx_1dx_2
\\
+\iint_{\bR^6}\rho(t,x_1)\rho(t,x_2)V_\eta(x_1-x_2)dx_1dx_2
\\
=\cF_{1,\eta}(t)+\tfrac1N V_\eta(0)&\,.
\ea
$$
On the other hand, using the smooth Fefferman-de la Llave representation \eqref{smFdlL} truncated for $r>\eta$ shows that
$$
\ba
\iint_{\bR^6}V_\eta(q-q')\left(\frac1N\sum_{j=1}^N\de_{x_j}-\rho(t,\cdot)\right)^{\otimes 2}\!\!\!\!\!(dqdq')
\\
=\int_\eta^\infty\int_{\bR^3}\left(\int_{\bR^3}G_r(q-z)\left(\frac1N\sum_{j=1}^N\de_{x_j}-\rho(t,\cdot)\right)(dq)\right)^2dzdr&\,.
\ea
$$
Now, for each $z\in\bR^3$ and $r>\eta$, one has
$$
\ba
\int_{(\bR^3)^N}\rho_{\hbar,N}(t,X_N)\left(\int_{\bR^3}G_r(q-z)\left(\frac1N\sum_{j=1}^N\de_{x_j}-\rho(t,\cdot)\right)(dq)\right)^2dX_N
\\
\ge\left(\int_{(\bR^3)^N}\rho_{\hbar,N}(t,X_N)\left(\int_{\bR^3}G_r(q-z)\left(\frac1N\sum_{j=1}^N\de_{x_j}-\rho(t,\cdot)\right)(dq)\right)dX_N\right)^2
\\
=\left(\int_{\bR^3}(\rho_{\hbar,N:1}(t,x_1)-\rho(t,x_1))G_r(x_1-z)dx_1\right)^2&\,.
\ea
$$
Thus
$$
\int_\eta^\infty\int_{\bR^3}\left(\int_{\bR^3}(\rho_{\hbar,N:1}(t,x_1)-\rho(t,x_1))G_r(x_1-z)dx_1\right)^2dzdr\le\cF_{1,\eta}(t)+\tfrac1N V_\eta(0)\,,
$$
or equivalently
$$
\ba
\int_\eta^\infty&\left\|e^{r\Dlt/2}(\rho_{\hbar,N:1}(t,\cdot)-\rho(t,\cdot))\right\|^2_{L^2(\bR^3)}dr
\\
&\le\cF[R_{\hbar,N},\rho](t)+\eta\|\rho\|_{L^\infty([0,T]\times\bR^3)}+\tfrac1N V_\eta(0)\,.
\ea
$$
Since
$$
V_\eta(0)=\int_{\bR^3}G_\eta(Y)\frac{dY}{4\pi|Y|}=\frac1{\sqrt{\eta}}\int_{\bR^3}G_1(y)\frac{dy}{4\pi|y|}=\frac{v_0}{\sqrt{\eta}}\,,
$$
the inequality above shows that, for each $\eps>0$ and each $N>v_0/\eps^{3/2}\|\rho\|_{L^\infty([0,T]\times\bR^3)}$, one has
$$
\ba
\int_\eps^\infty\left\|e^{r\Dlt/2}(\rho_{\hbar,N:1}(t,\cdot)-\rho(t,\cdot))\right\|^2_{L^2(\bR^3)}dr
\\
\le \cF[R_{\hbar,N},\rho](t)+\inf_{0<\eta<\eps}\left(\eta\|\rho\|_{L^\infty([0,T]\times\bR^3)}+\tfrac1N V_\eta(0)\right)
\\
\le \cF[R_{\hbar,N},\rho](t)+2\frac{v_0^{2/3}\|\rho\|^{1/3}_{L^\infty([0,T]\times\bR^3)}}{N^{2/3}}&\,.
\ea
$$
With \eqref{VanishF}, we conclude that
\be\lb{CvRhoHeat}
\int_\eps^\infty\left\|e^{r\Dlt/2}(\rho_{\hbar,N:1}(t,\cdot)-\rho(t,\cdot))\right\|^2_{L^2(\bR^3)}dr\to 0
\ee
for each $\eps>0$ and each $t\in[0,T]$ as $\tfrac1N+\hbar\to 0$.

On the other hand, we know that, for each $t\in[0,T]$, the family $\rho_{\hbar,N:1}(t,\cdot)$ satisfies
$$
\rho_{\hbar,N:1}(t,x)\ge 0\text{ for a.e. }x\in\bR^3\,,\quad\int_{\bR^3}\rho_{\hbar,N:1}(t,x)dx=1\,.
$$
By the Banach-Alaoglu theorem (see Theorem 3.1.6 of \cite{Brezis}), this family is relatively compact for the weak topology of bounded Borel measures on $\bR^3$. If 
$$
\rho_{\hbar_n,N_n:1}(t,\cdot)\to\bar\rho\equiv\bar\rho(dx)\quad\text{ as }\frac1{N_n}+\hbar_n\to 0
$$
in the weak topology of bounded Borel measures on $\bR^3$, we deduce from \eqref{CvRhoHeat} that
$$
e^{r\Dlt/2}(\bar\rho-\rho(t,\cdot))=0\quad\text{ for all }r>\eps\,.
$$
Hence $\bar\rho=\rho(t,\cdot)$, so that, by compactness and uniqueness of the limit,
$$
\rho_{\hbar,N:1}(t,\cdot)\to\rho(t,\cdot)\text{ weakly as }\frac1N+\hbar\to 0
$$
for all $t\in[0,T]$. Because of \eqref{NormaRhot} and Theorem 6.8 in \cite{Malliavin}, we conclude that the convergence above holds in the narrow topology, which proves \eqref{Cvrhot}.

\subsection{Step 3: Proof of \eqref{CvJt}}

%%%%%%%%%%%%%%%%%%%%%%%%%%%%%%%%%%%%%%%%%%%%%%%%%%%%%%%%%%%%%%%%%%%%%%%%%%%%%%%%%%%%%%%%%%%%%%%%%%%%%%%%%%%%

The definition \eqref{DefCurrent} of the current 
$$
J_{\hbar,N:1}=(J^1_{\hbar,N:1},J^2_{\hbar,N:1},J^3_{\hbar,N:1})
$$ 
implies that, for each test vector field $b\equiv b(x)=(b_1(x),b_2(x),b_3(x))$ in $C_b(\bR^3;\bR^3)$ and $m=1,2,3$, one has
$$
\Tr\left(b_m(\tfrac12R_{\hbar,N:1}(t)\vee(\hbar D_m-u_m(t,\cdot))\right)=\la J^m_{\hbar,N:1}(t)-\rho_{\hbar,N:1}(t,\cdot)u_m(t,\cdot),b_m\ra
$$
for all $t\in[0,T]$. Therefore
$$
\ba
\sum_{m=1}^3&\left|\la J_{\hbar,N:1}^m(t)-\rho_{\hbar,N:1}(t,\cdot)u_m(t,\cdot),b_m\ra\right|
\\
&\le\!\sum_{m=1}^3|\Tr(b_m(\tfrac12R_{\hbar,N:1}(t)\vee(\hbar D_m\!-\!u_m(t,\cdot))))|
\\
&\le\sum_{m=1}^3\|R_{\hbar,N:1}(t)^{1/2}(\hbar D_m-u_m)\|_2\|b_mR_{\hbar,N:1}(t)^{1/2}\|_2
\\
&\le\sum_{m=1}^3\|R_{\hbar,N:1}(t)^{1/2}(\hbar D_m-u_m)\|_2\|b_m\|_{L^\infty(\bR^3)}\to 0
\ea
$$
for all $t\in[0,T]$ as $\tfrac1N+\hbar\to 0$ because of \eqref{VanishMonokin}. Thus, for each $t\in[0,T]$ and each $m=1,2,3$,
$$
J_{\hbar,N:1}^m(t)-\rho_{\hbar,N:1}(t,\cdot)u_m(t,\cdot)\to 0
$$
for the narrow topology of (bounded) signed Borel measures on $\bR^3$. With the convergence \eqref{Cvrhot} already established in the previous step, and since $u_m(t,\cdot)\in C_b(\bR^3)$ for each 
$t\in[0,T]$ and each $m=1,2,3$, this implies that \eqref{CvJt} holds.

\subsection{Step 4: Proof of \eqref{Wignert}}

%%%%%%%%%%%%%%%%%%%%%%%%%%%%%%%%%%%%%%%%%%%%%%%%%%%%%%%%%%%%%%%%%%%%%%%%%%%%%%%%%%%%%%%%%%%%%%%%%%%%%%%%%%%%

The convergence \eqref{Monokint} is already established --- see \eqref{VanishMonokin} at the beginning of Step 2 above.

Because of \eqref{VanishMonokin}, one has
\be\lb{TightRhN1}
\sup_{N\ge 2\,,\,\,\hbar\in(0,1)}\Tr(R_{\hbar,N;1}(t)^{1/2}|\hbar D_x|^2R_{\hbar,N;1}(t)^{1/2})<\infty\,.
\ee
Let $R_{\hbar_n,N_n;1}(t)$ be a subsequence of $R_{\hbar,N:1}(t)$ such that
$$
W_{\hbar_n}[R_{\hbar_n,N_n:1}(t)]\to W(t)
$$
in $\cS'(\bR^3\times\bR^3)$ as $\tfrac1N_n+\hbar_n\to 0$. By Theorem III.2 and the bound (55) in \cite{LionsPaul}, 
\be\lb{intWdxi}
\rho(t,\cdot)=\int_{\bR^3}W(t)d\xi\,.
\ee
If $\nu:=\nu(dxd\xi)$ is a bounded measure on $\bR^d_x\times\bR^d_\xi$, we denote by
$$
\int_{\bR^d}\nu d\xi
$$
the push-forward of $\nu$ under the projection $\bR^d_x\times\bR^d_\xi\ni(x,\xi)\mapsto x\in\bR^d$. In other words, 
$$
\La\int_{\bR^d}\nu d\xi,\phi\Ra:=\iint_{\bR^d\times\bR^d}\phi(x)\nu(dxd\xi)
$$
for each test function $\phi\in C_b(\bR^d)$.) In particular
\be\lb{Wu2}
\int_{\bR^3}|u(t,x)|^2W(t,dxd\xi)=\int_{\bR^3}\rho(t,x)|u(t,x)|^2dx\,.
\ee

Let the Husimi transform of $R_{\hbar,N:1}(t)$ be defined by
$$
\tilde W_{\hbar}[R_{\hbar,N:1}(t)]:=e^{\hbar\Dlt_{x,\xi}/4}W_{\hbar}[R_{\hbar,N:1}(t)]\,.
$$
By formula (25) in \cite{LionsPaul}, one has $\tilde W_{\hbar}[R_{\hbar,N:1}(t)]\ge 0$, while
$$
\iint_{\bR^6}|\xi|^2\tilde W_{\hbar}[R_{\hbar,N:1}(t)](x,\xi)dxd\xi=\Tr(R_{\hbar,N:1}(t)^{1/2}|\hbar D_x|^2R_{\hbar,N:1}(t)^{1/2})+3\hbar
$$
according to the formula following equation (30) in \cite{LionsPaul} (with the only difference that section II in \cite{LionsPaul} assumes that $\hbar=1$). Specializing this to the subsequence $R_{\hbar_n,N_n;1}(t)$
and passing to the limit as $\tfrac1{N_n}+\hbar_n\to 0$, we conclude from Fatou's lemma that
\be\lb{Wxi2}
\ba
\iint_{\bR^6}|\xi|^2W(t,dxd\xi)\le&\varliminf_{1/N_n+\hbar_n\to 0}\iint_{\bR^6}|\xi|^2\tilde W_{\hbar_n}[R_{\hbar_n,N_n:1}(t)](x,\xi)dxd\xi
\\
=&\varliminf_{1/N_n+\hbar_n\to 0}\Tr(R_{\hbar_n,N_n:1}(t)^{1/2}|\hbar_nD_x|^2R_{\hbar_n,N_n:1}(t)^{1/2})\,.
\ea
\ee

Let $\chi\in C^\infty(\bR^3)$ satisfy $\indc_{B(0,1)}(\xi)\le\chi(\xi)\le\indc_{B(0,2)}(\xi)$ for all $\xi\in\bR^3$, and let $u^\eta\equiv u_\eta(x)\in\bR^3$ such that $u^\eta_j\in\cS(\bR^3)$ for all $j=1,2,3$  and $\eta>0$,
and
$$
\|u^\eta_j-u_j(t,\cdot)\|_{W^{1,\infty}(\bR^3)}\to 0\quad\text{ for }j=1,2,3\text{ as }\eta\to 0^+\,.
$$
Set 
$$
f_\eta(x,\xi):=\chi(\eta\xi)u^\eta(x)\cdot\xi\,,\qquad x,\xi\in\bR^3\,,\,\,\eta>0\,.
$$
Since $u^\eta_j\in\cS(\bR^3)$ for $j=1,2,3$, the function $f_\eta\in\cS(\bR^3\times\bR^3)$ for each $\eta>0$. Since $f_\eta(x,\hbar D_x)=u^\eta_j(x)\chi(\eta\hbar D_x)\hbar D_j$, applying Remark III.10 in 
\cite{LionsPaul} shows that
$$
\iint_{\bR^6}u^\eta_j(x)\xi_j\chi(\eta\hbar_n\xi)W(t,dxd\xi)dxd\xi=\lim_{n\to\infty}\Tr(u_j(x)\chi(\eta\hbar D_x)\hbar D_jR_{\hbar_n,N_n:1}(t))
$$
for each $\eta>0$. Let $(\Psi^n_m)_{m\ge 1}$ be a complete orthonormal system of eigenfunctions of $R_{\hbar_n,N_n:1}(t)$ in $L^2(\bR^3)$, and let $\L^n_m$ be the associated sequence of eigenvalues, i.e.
$R_{\hbar_n,N_n:1}(t)\Psi^n_m=\L^n_m\Psi^n_m$, ordered so that $\L^n_1\ge\L^n_2\ge\ldots\ge\L^n_m\ge\ldots\ge 0$. Using the Plancherel theorem, we see that the condition \eqref{TightRhN1} is expressed as 
$$
\sup_{n\ge 1}\tfrac1{(2\pi)^3}\sum_{m\ge 1}\L^n_m\|\hbar_n|\xi|\widehat{\Psi^n_m}\|^2_{L^2(\bR^3)}=\sup_{n\ge 1}\sum_{m\ge 1}\L^n_m\|\hbar_n|D_x|\Psi^n_m\|^2_{L^2(\bR^3)}=C<\infty\,,
$$
while
$$
\ba
{}&\Tr(u_j(t,\cdot)\hbar D_jR_{\hbar_n,N_n:1}(t))=\sum_{m\ge 1}\L^n_m\la\hbar D_j(u_j\Psi^n_m)|\Psi^n_m\ra\,,
\\
&\Tr(u^\eta_j(x)\chi(\eta\hbar D_x)\hbar D_jR_{\hbar_n,N_n:1}(t))=\sum_{m\ge 1}\L^n_k\la\hbar D_j(u^\eta_j\Psi^n_m)|\chi(\eta\hbar D_x)\Psi^n_m\ra\,.
\ea
$$
Observe that, assuming $0\le\hbar_n\le 1$,
$$
\ba
\left|\sum_{m\ge 1}\L^n_m\la\hbar_nD_j(u_j\Psi^n_m)|(1-\chi(\eta\hbar_nD_x))\Psi^n_m\ra\right|^2
\\
\le\sum_{m\ge 1}\L^n_m\|\hbar_nD_j(u_j\Psi^n_m)\|^2_{L^2(\bR^3)}\sum_{m\ge 1}\L^n_m\|(1-\chi(\eta\hbar_nD_x))\Psi^n_m\|^2_{L^2(\bR^3)}
\\
\le2\sum_{m\ge 1}\L^n_m\left(\|u\|^2_{L^\infty(\bR^3)}\|\hbar_n|D_j|\Psi^n_m\|^2_{L^2(\bR^3)}+\hbar_n^2\|\Div_xu\|^2_{L^\infty(\bR^3)}\|\Psi^n_m\|^2_{L^2(\bR^3)}\right)
\\
\times\tfrac1{(2\pi)^3}\sum_{m\ge 1}\L^n_m\|\indc_{\eta\hbar_n|\xi|\ge 1}\widehat{\Psi^n_m}\|^2_{L^2(\bR^3)}
\\
\le 2(\|u\|^2_{L^\infty(\bR^3)}\!+\!\hbar_n^2\|\Div_xu\|^2_{L^\infty(\bR^3)})\Tr(R_{\hbar,N:1}(t)^{1/2}(1\!+\!|\hbar_nD_x|^2)R_{\hbar,N:1}(t)^{1/2})
\\
\times\tfrac1{(2\pi)^3}\eta^2\sum_{m\ge 1}\L^n_m\|\hbar_n|\xi|\widehat{\Psi^n_m}\|^2_{L^2(\bR^3)}
\\
\le 2\eta^2\|u\|^2_{W^{1,\infty}(\bR^3)}\Tr(R_{\hbar,N:1}(t)^{1/2}(1+|\hbar_nD_x|^2)R_{\hbar,N:1}(t)^{1/2})^2&\,.
\ea
$$
On the other hand
$$
\ba
\left|\sum_{m\ge 1}\L^n_m\la\hbar_nD_j((u_j-u^\eta_j)\Psi^n_m)|\chi(\eta\hbar_nD_x)\Psi^n_m\ra\right|^2
\\
=\left|\sum_{m\ge 1}\L^n_m\la(u_j(t,\cdot)-u^\eta_j)\Psi^n_m|\chi(\eta\hbar_nD_x)\hbar_nD_j\Psi^n_m\ra\right|^2
\\
\le\sum_{j=1}^3\sum_{m\ge 1}\L^n_m\|(u_j-u^\eta_j)\Psi^n_m\|^2_{L^2(\bR^3)}\sum_{m\ge 1}\L^n_m\|\chi(\eta\hbar_nD_x)\hbar_nD_j\Psi^n_m\|^2_{L^2(\bR^3)}
\\
\le\sum_{j=1}^3\sum_{m\ge 1}\L^n_m\|(u_j-u^\eta_j)\Psi^n_m\|^2_{L^2(\bR^3)}\sum_{m\ge 1}\L^n_m\|\hbar_nD_j\Psi^n_m\|^2_{L^2(\bR^3)}
\\
\le 3C\|u(t,\cdot)-u^\eta\|_{L^\infty(\bR^3)}\sum_{m\ge 1}\L^n_m\|\Psi^n_m\|^2_{L^2(\bR^3)}=3C\|u(t,\cdot)-u^\eta\|_{L^\infty(\bR^3)}&\,.
\ea
$$

Hence
$$
\ba
\Tr&((u_j(t,\cdot)-u^\eta_j\chi(\eta\hbar D_x))\hbar D_jR_{\hbar_n,N_n:1}(t))
\\
=&\sum_{m\ge 1}\L^n_m\la\hbar_nD_j(u_j(t,\cdot)\Psi^n_m)|(1-\chi(\eta\hbar_nD_x))\Psi^n_m\ra
\\
&+\sum_{m\ge 1}\L^n_m\la(u_j(t,\cdot)-u^\eta_j)\Psi^n_m|\chi(\eta\hbar_nD_x)\hbar_nD_j\Psi^n_m\ra\to 0
\ea
$$
as $\eta\to 0$ uniformly in $n$, and therefore
\be\lb{Wuxi}
\iint_{\bR^6}u_j(t,x)\xi_jW(t,dxd\xi)dxd\xi=\lim_{n\to\infty}\Tr(u_j(t,\cdot)\hbar D_jR_{\hbar_n,N_n:1}(t))\,.
\ee

With \eqref{Wu2} and \eqref{Wxi2}, we conclude from \eqref{Wuxi} and \eqref{Monokin0} that
$$
\ba
\iint_{\bR^6}|\xi-u(t,x)|^2W(t,dxd\xi)
\\
\le\varliminf_{1/N_n+\hbar_n\to 0}\Tr(R_{\hbar_n,N_n:1}(t)^{1/2}|\hbar_nD_x-u(t,\cdot)|^2R_{\hbar_n,N_n:1}(t)^{1/2})=0&\,.
\ea
$$
Since $W(t)\ge 0$ by Theorem III.2 in \cite{LionsPaul}, we conclude that
$$
|\xi-u(t,x)|^2W(t,dxd\xi)=0\,.
$$
With \eqref{intWdxi}, this implies that $W(t,dxd\xi)=\rho(t,x)\de(\xi-u(t,x))$. Since all the limit points of $W_{\hbar}[R_{\hbar,N:1}(t)]$ are of this form, and $W_{\hbar}[R_{\hbar,N:1}(t)]$ is bounded in the topological 
dual $\cA'$ of the Lions-Paul Banach space
$$
\cA:=\{\phi\in C_0(\bR^3\times\bR^3)\text{ s.t. }(x,\xi)\mapsto\widehat{\phi(x,\cdot)}(\xi)\text{ belongs to }L^1(\bR^3_\xi;C_0(\bR^3_x))\}\,,
$$
the family $W_{\hbar}[R_{\hbar,N:1}(t)]$ is relatively compact in $\cA'$ for the weak-* topology by the Banach-Alaoglu theorem (Theorem 3.1.6 of \cite{Brezis}). By compactness and uniqueness of the limit point, 
we conclude that \eqref{Wignert} holds.

This completes the proof of Theorem \ref{T-MFSCLim}.

%%%%%%%%%%%%%%%%%%%%%%%%%%%%%%%%%%%%%%%%%%%%%%%%%%%%%%%%%%%%%%%%%%%%%%%%%%%%%%%%%%%%%%%%%%%%%%%%%%%%%%%%%%%%

\section{Proof of Proposition \ref{P-SCLim}}\lb{S-SCMF}

%%%%%%%%%%%%%%%%%%%%%%%%%%%%%%%%%%%%%%%%%%%%%%%%%%%%%%%%%%%%%%%%%%%%%%%%%%%%%%%%%%%%%%%%%%%%%%%%%%%%%%%%%%%%

This proof is similar to, and in places much simpler than the proof of Theorem \ref{T-MFSCLim}. Whenever similar to those in sections \ref{S-PropaEstim} and \ref{S-ProofT}, the arguments used 
in the proof of Proposition \ref{P-SCLim} are only sketched below. We shall insist only on those parts of the proof which which are simpler variants of sections \ref{S-PropaEstim} and \ref{S-ProofT}.

Consider the functional
$$
\ba
\cG[R_\hb,\rho,u](t):=\Tr_\fH(R_\hb(t)^{1/2}|\hb D_x-u(t,\cdot)|^2R_\hb(t)^{1/2})
\\
+\iint_{\bR^6}V(x-y)(\rho_\hb-\rho)(t,x)(\rho_\hb-\rho)(t,y)dxdy&\,,
\ea
$$
where $(\rho,u)$ is the solution of \eqref{EulP}, while $R_\hb$ is the classical solution of \eqref{Hartree}. 

We recall Hardy's inequality: for each $\phi\in H^1(\bR^3)$, one has
$$
\int_{\bR^3}\frac{|\phi(x)|^2}{|x|^2}dx\le 4\int_{\bR^3}|\grad_x\phi(x)|^2dx\,.
$$
We also recall Proposition 5.3 from \cite{Bove}:
$$
\ba
\Tr(R_\hb(t)^{1/2}(I-\Dlt_x)R_\hb(t)^{1/2})+\tfrac12\Tr(R_\hb(t)V\star\rho_\hb(t,\cdot))
\\
=\Tr(R_\hb(0)^{1/2}(I-\Dlt_x)R_\hb(0)^{1/2})+\tfrac12\Tr(R_\hb(0)V\star\rho_\hb(0,\cdot))
\ea
$$
for all $t\ge 0$ and $\hb>0$. Together with \eqref{TightR0} and Hardy's inequality, this implies that
$$
\sup_{t\ge 0}\Tr(R_\hb(t)^{1/2}(I-\Dlt_x)R_\hb(t)^{1/2})<\infty\,.
$$
Let $\psi_m(t,\cdot)$ with $m\ge 0$ be a complete orthonormal system of eigenfunctions of $R_\hb(t)$ in $\fH=L^2(\bR^3)$. Let $\l_m$ be the associated sequence of eigenvalues of $R_\hb(t)$, 
i.e. $R_\hb(t)\psi_m(t,\cdot))=\l_m\psi_m(t,\cdot)$ for each $m\ge 0$. Then
$$
\rho_\hb(t,x)=\sum_{m\ge 0}\l_m|\psi_m(t,x)|^2
$$
and, by the Sobolev embedding $H^1(\bR^3)\subset L^6(\bR^3)$, one has
$$
\ba
\|\rho_\hb(t,\cdot)\|_{L^3(\bR^3)}\le&\sum_{m\ge 0}\|\l_m|\psi_m(t,x)|^2\|_{L^3(\bR^3)}
\\
\le&C_S^2\sum_{m\ge 0}\|\sqrt{\l_m}\grad_x\psi_m(t,x)\|^2_{L^2(\bR^3)}
\\
\le&C_S^2\sup_{t\ge 0}\Tr(R_\hb(t)^{1/2}(I-\Dlt_x)R_\hb(t)^{1/2})\,.
\ea
$$
Since
$$
\rho_\hb(t,x)\ge 0\quad\text{ and }\quad\int_{\bR^3}\rho_\hb(t,x)dx=1\,,
$$
one has $\rho_\hb\in L^\infty(\bR_+;L^{6/5}(\bR^3))$ and therefore $t\mapsto V\star\rho_\hb(t,\cdot)\in L^\infty(\bR_+;L^6(\bR^3)$. In particular $t\mapsto\rho_\hb(t,\cdot)V\star\rho_\hb(t,\cdot)$
belongs to $L^\infty(\bR_+;L^1(\bR^3))$, and one has
$$
\iint_{\bR^6}V(x-y)(\rho_\hb-\rho)(t,x)(\rho_\hb-\rho)(t,y)dxdy\ge 0
$$
for all $t\in[0,T]$. (Indeed, since $\rho\in L^\infty([0,T];L^1\cap L^\infty(\bR^3))$, the difference in density functions  $\rho_\hb(t,\cdot)-\rho(t,\cdot)\in L^\infty([0,T];L^{6/5}(\bR^3))$, and approximating $\rho_\hb(t,\cdot)-\rho(t,\cdot)$
by a sequence of functions $\phi_n$ in the Schwartz class $\cS(\bR^3)$, we conclude by observing that
$$
\iint_{\bR^6}V(x-y)\phi_n(x)\phi_n(y)dxdy=\tfrac1{(2\pi)^3}\int_{\bR^3}\hat V(\xi)|\hat \phi_n(\xi)|^2d\xi\ge 0
$$
since $\hat V(\xi)=|\xi|^{-2}\ge 0$.) In particular $\cG[R_\hb,\rho,u](t)\ge 0$ for all $t\in[0,T]$. 

Proceeding as in the proof of Proposition \ref{P-DtE} shows that
$$
\ba
\cG[R_\hb,\rho,u](t)=\cG[R_\hb,\rho,u](t)
\\
-2\int_0^t\Tr_\fH(R_\hb(s)^{1/2}(\hb D_j-u_j(s,\cdot))\Si_{jk}(s,\cdot)(\hb D_k-u_k(s,\cdot))R_\hb(s)^{1/2})ds
\\
+\int_0^t\iint_{\bR^6}(u(s,x)-u(s,y))\cdot\grad V(x-y)(\rho_\hb-\rho)(s,x)(\rho_\hb-\rho)(s,y)dxdyds
\\
+\tfrac12\hbar^2\int_0^t\int_{\bR^3}\Dlt_x(\Div_xu(s,x))\rho_\hb(s,x)dxds&\,,
\ea
$$
where we recall the notation $\Sigma_{jk}=\tfrac12(\d_ju_k+\d_ku_j)$. As above
$$
\ba
\left|\int_{\bR^3}\Dlt_x(\Div_xu(s,x))\rho_\hb(s,x)dx\right|\le&\|\Dlt_x(\Div_xu)\|_{L^\infty([0,T\times\bR^3)}\int_{\bR^3}\rho_\hb(s,x)dx
\\
=&\|\Dlt_x(\Div_xu)\|_{L^\infty([0,T]\times\bR^3)}\,,
\ea
$$
and
$$
\ba
\left|\Tr_\fH(R_\hb(s)^{1/2}(\hb D_j-u_j(s,\cdot))\Si_{jk}(s,\cdot)(\hb D_k-u_k(s,\cdot))R_\hb(s)^{1/2})\right|
\\
\le\|\grad_xu\|_{L^\infty([0,T]\times\bR^3)}\Tr_\fH(R_\hb(s)^{1/2}|\hb D_x-u(s,\cdot)|^2R_\hb(s)^{1/2})&\,.
\ea
$$

Proceeding as in Lemma \ref{L-FcF}, one has
$$
\ba
\iint_{\bR^6}V(x_1-x_2)(\rho_\hb-\rho)(s,x_1)(\rho_\hb-\rho)(s,x_2)dx_1dx_2
\\
=\frac1N\iint_{\bR^6}V(x_1-x_2)\rho_\hb(s,x_1)\rho_\hb(s,x_2)dx_1dx_2
\\
+\int_{(\bR^3)^N}\frac1{N^2}F(X_N,\rho(s,\cdot))\prod_{n=1}^N\rho_\hb(s,x_n)dx_n&\,,
\ea
$$
and
$$
\ba
\iint_{\bR^6}(u(s,x_1)-u(s,x_2))\cdot\grad V(x_1-x_2)(\rho_\hb-\rho)(s,x_1)(\rho_\hb-\rho)(s,x_2)dx_1dx_2
\\
=\frac1N\iint_{\bR^6}(u(s,x_1)-u(s,x_2))\cdot\grad V(x_1-x_2)\rho_\hb(s,x_1)\rho_\hb(s,x_2)dx_1dx_2
\\
+\int_{(\bR^3)^N}\frac1{N^2}F'(X_N,(\rho,u)(s,\cdot))\prod_{n=1}^N\rho_\hb(s,x_n)dx_n&\,.
\ea
$$
It will be more convenient to recast these identities as
$$
\ba
\tfrac{N-1}N\iint_{\bR^6}V(x_1-x_2)(\rho_\hb-\rho)(s,x_1)(\rho_\hb-\rho)(s,x_2)dx_1dx_2
\\
=\int_{(\bR^3)^N}\frac1{N^2}F(X_N,\rho(s,\cdot))\prod_{n=1}^N\rho_\hb(s,x_n)dx_n
\\
+\tfrac1N\int_{\bR^3}(2\rho_\hb-\rho)(s,x)(V\star_x\rho)(s,x)dx&\,,
\ea
$$
and
$$
\ba
\tfrac{N-1}N\iint_{\bR^6}(u(s,x_1)-u(s,x_2))\cdot\grad V(x_1-x_2)(\rho_\hb-\rho)(s,x_1)(\rho_\hb-\rho)(s,x_2)dx_1dx_2
\\
=\tfrac1N\iint_{\bR^6}(u(s,x_1)-u(s,x_2))\cdot\grad V(x_1-x_2)(2\rho_\hb-\rho)(s,x_1)\rho(s,x_2)dx_1dx_2
\\
+\int_{(\bR^3)^N}\frac1{N^2}F'(X_N,(\rho,u)(s,\cdot))\prod_{n=1}^N\rho_\hb(s,x_n)dx_n&\,.
\ea
$$
Applying the Serfaty inequality shows that
$$
\ba
\Big|\tfrac{N-1}N\iint_{\bR^6}(u(s,x_1)\!-\!u(s,x_2))\cdot\grad V(x_1\!-\!x_2)(\rho_\hb\!-\!\rho)(s,x_1)(\rho_\hb\!-\!\rho)(s,x_2)dx_1dx_2
\\
-\tfrac1N\iint_{\bR^6}(u(s,x_1)-u(s,x_2))\cdot\grad V(x_1-x_2)(2\rho_\hb-\rho)(s,x_1)\rho(s,x_2)dx_1dx_2\Big|
\\
\le C\|\grad u(s,\cdot)\|_{L^\infty(\bR^3)}\Big(\tfrac{N-1}N\iint_{\bR^6}V(x_1-x_2)(\rho_\hb-\rho)(s,x_1)(\rho_\hb-\rho)(s,x_2)dx_1dx_2
\\
-\tfrac1N\int_{\bR^3}(2\rho_\hb-\rho)(s,x)(V\star_x\rho)(s,x)dx\Big)
\\
+C\left(1+\|\rho\|_{L^\infty(\bR^3)}\right)\left(1+\|u\|_{W^{1,\infty}(\bR^3)}\right)\frac1{N^{1/3}}&\,.
\ea
$$
Letting $N\to\infty$ in this inequality, we conclude that
$$
\ba
\Big|\iint_{\bR^6}(u(s,x_1)-u(s,x_2))\cdot\grad V(x_1-x_2)(\rho_\hb-\rho)(s,x_1)(\rho_\hb-\rho)(s,x_2)dx_1dx_2\Big|
\\
\le C\|\grad u(s,\cdot)\|_{L^\infty(\bR^3)}\iint_{\bR^6}V(x_1-x_2)(\rho_\hb-\rho)(s,x_1)(\rho_\hb-\rho)(s,x_2)dx_1dx_2&\,.
\ea
$$
Finally
$$
\ba
\frac{d}{dt}\cG[R_\hb,\rho,u](t)\le 2\|\grad_xu\|_{L^\infty([0,T]\times\bR^3)}\Tr_\fH(R_\hb(t)^{1/2}|\hb D_x-u(t,\cdot)|^2R_\hb(t)^{1/2})
\\
+C\|\grad u\|_{L^\infty([0,T]\times\bR^3)}\iint_{\bR^6}V(x_1-x_2)(\rho_\hb-\rho)(t,x_1)(\rho_\hb-\rho)(t,x_2)dx_1dx_2
\\
+\tfrac12\hb^2\|\Dlt_x(\Div_xu)\|_{L^\infty([0,T]\times\bR^3)}&\,,
\ea
$$
so that
$$
\ba
0\le\cG[R_\hb,\rho,u](t)\le\cG[R_\hb,\rho,u](0)e^{\max(2,C)t\|\grad_xu\|_{L^\infty([0,T]\times\bR^3)}}
\\
+\tfrac12\hb^2\|\Dlt_x(\Div_xu)\|_{L^\infty([0,T]\times\bR^3)}\frac{e^{\max(2,C)t\|\grad_xu\|_{L^\infty([0,T]\times\bR^3)}}-1}{\max(2,C)t\|\grad_xu\|_{L^\infty([0,T]\times\bR^3)}}&\,.
\ea
$$

With \eqref{Monokin0} and \eqref{EPrho0}, the previous inequality shows that
$$
\Tr_\fH(R_\hb(t)^{1/2}|\hb D_x-u(t,\cdot)|^2R_\hb(t)^{1/2})\to 0
$$
which is the third convergence statement to be proved in Proposition \ref{P-SCLim}, and
$$
\iint_{\bR^6}V(x-y)(\rho_\hb-\rho)(t,x)(\rho_\hb-\rho)(t,y)dxdy\to 0
$$
as $\hb\to 0$ for all $t\in[0,T]$. Since $\rho_\hb(t,\cdot)$ is a bounded family of Borel probability measures on $\bR^3$ for each $t\in[0,T]$ as $\hb$ runs through $[0,1]$, the same argument as in Step 2 of 
section \ref{S-ProofT} shows that $\rho_\hb(t,\cdot)\to\rho(t,\cdot)$ in the narrow topology of Radon measures on $\bR^3$. That $J_\hb(t,\cdot)\to\rho u(t,\cdot)$ for the narrow topology of signed Radon 
measures on $\bR^3$, and that $W_\hb[R_\hb(t)]\to\rho(t,x)\de(\xi-u(t,x))$ in $\cS'(\bR^3)$ for all $t\in[0,T]$ as $\hb\to 0$ is proved exactly as in steps 3 and 4 of section \ref{S-ProofT}. This concludes the 
proof of Proposition \ref{P-SCLim}.

\textbf{Remark.} This proof bears on a mean-field equation, describing the behavior of the typical single particle in a system of $N$ identical particles. It may seem somewhat strange that the argument
should involve $N$ as in the proof of \eqref{T-MFSCLim}, and one might have preferred a more direct argument. Nevertheless this one has the merit of being essentially a repetition of what has been
in the two previous sections.

\textbf{Acknowledgement.} We are grateful to S. Serfaty and M. Duerinckx for very useful additional explanations on their joint article \cite{DS}, of critical importance for our results.

%%%%%%%%%%%%%%%%%%%%%%%%%%%%%%%%%%%%%%%%%%%%%%%%%%%%%%%%%%%%%%%%%%%%%%%%%%%%%%%%%%%%%%%%%%%%%%%%%%%%%%%%%%%%

\end{document}